
\documentclass[final]{siamonline1116}
\usepackage{enumitem}

\usepackage{amsfonts}
\ifpdf

  \DeclareGraphicsExtensions{.eps,.pdf,.png,.jpg}
\else
  \DeclareGraphicsExtensions{.eps}
\fi
\usepackage[english]{babel}
\usepackage{amsmath}
\usepackage{amssymb} 
\usepackage[hang,small,bf]{caption}
\usepackage[labelformat=simple]{subcaption}

\usepackage{siunitx}
\usepackage[nocompress,space]{cite}
\numberwithin{theorem}{section}
\usepackage{color}

\graphicspath{{figures/}}
\newcommand{\vare}{\varepsilon}

\newsiamremark{remark}{Remark}  
\crefname{remark}{Remark}{Remarks}  


\makeatletter
\long\def\@makecaption#1#2{%
    \footnotesize
    \setlength{\parindent}{1.5pc}
  \ifx\@captype\@figtxt
    \vskip\abovecaptionskip
    \setbox\@tempboxa\hbox{{\normalfont\color{header1}\HLsmall #1.} {\normalfont\itshape #2}}%
    \ifdim \wd\@tempboxa >\hsize
      {\normalfont\color{header1}\HLsmall #1.} {\normalfont\itshape #2}\par
    \else
      \global\@minipagefalse
      \hb@xt@\hsize{\hfil\box\@tempboxa\hfil}%
    \fi
  \else
    \hbox to\hsize{\hfil{\normalfont\color{header1}\HLsmall #1}\hfil}%
    \setbox\@tempboxa\hbox{{\normalfont\itshape #2}}%
    \ifdim \wd\@tempboxa >\hsize
      {\normalfont\itshape #2}\par
    \else
     \global\@minipagefalse
      \hb@xt@\hsize{\hfil\box\@tempboxa\hfil}%
    \fi
    \vskip\belowcaptionskip
  \fi}
\makeatother

\title{A stiction oscillator under slowly varying forcing: Uncovering small scale phenomena using blowup}
 
\author{Kristian Uldall Kristiansen }

\begin{document}    
\maketitle

\begin{abstract}
In this paper, we consider a mass-spring-friction oscillator with the friction modelled by a regularized stiction model in the limit where the ratio of the natural spring frequency and the forcing frequency is on the same order of magnitude as the scale associated with the regularized stiction model. The motivation for studying this situation comes from \cite{bossolini2017b} which demonstrated new friction phenomena in this regime. The results of \cite{bossolini2017b} led to some open problems, see also \cite{bossolini2020}, that we resolve in this paper. In particular, using GSPT and blowup \cite{jones1995a,krupa2001a} we provide a simple geometric description of the bifurcation of stick-slip limit cycles through a combination of a canard and a global return mechanism. We also show that this combination leads to a canard-based horseshoe and are therefore able to prove existence of chaos in this fundamental oscillator system.  
\end{abstract}

\begin{keywords}
stiction, friction oscillator, GSPT, blowup, stick-slip, canards
 \end{keywords}

\begin{AMS}
34A36, 34E15, 34C25, 37N15, 70E18, 70E20
\end{AMS}

\begin{DOI}
10.1137/17M1120774
\end{DOI}

\section{Introduction}
The importance of slow-fast systems:
\begin{equation}\label{eq:slowfast}
\begin{aligned}
 \dot x &= f(x,y,\varepsilon),\\
 \dot y&=\varepsilon  g(x,y,\varepsilon),
\end{aligned}
\end{equation}
is well recognized in many areas of applied mathematics, perhaps most notably in mathematical neuroscience \cite{izh07,Ter10}. During the last decades these systems have been studied intensively using the framework of Geometric Singular Perturbation Theory (GSPT) \cite{fenichel1974a,fenichel1979a,jones1995a,kuehn2015a}. The point of departure from  this theory, based upon Fenichel's theory of singular perturbations, is the critical set $S_0=\{(x,y)\,:\,f(x,y,0)=0\}$ of \cref{eq:slowfast}$_{\vare=0}$. If $S_0$ is a compact submanifold and the normal hyperbolicity conditions hold, i.e. $D_x f\vert_{S_0}$ only have eigenvalues with nonzero real part, then Fenichel's theory says that $S_0$, as well as its stable and unstable manifolds $W^{s/u}(S_0)$, perturb to $S_{\vare}$ and $W^{s/u}(S_\vare)$, respectively, for all $0<\vare\ll 1$. To deal with points on $S_0$, where the normal hyperbolicity condition is lost, the blowup method \cite{dumortier1996a} has -- following the work of \cite{krupa2001a} -- been used to extend GSPT. This extended theory has been applied in many different scientific contexts to describe complicated dynamics, including relaxation oscillations \cite{kosiuk2009,kosiuk2015a,Kristiansen_2020,kuehn2015b}, as well as canard phenomena due to repelling critical manifolds \cite{carter2017,krupa2001c,szmolyan2001a,szmolyan2004a}.

More recently, systems that limit onto nonsmooth ones as $\vare\rightarrow 0$, e.g.
%
%
\begin{equation}\label{eq:nonsm}
\begin{aligned}
 \dot x =f(x,y,\phi(y\vare^{-1})),\\
 \dot y =g(x,y,\phi(y\vare^{-1})),
\end{aligned}
\end{equation}
for $x\in \mathbb R^n$, $y\in \mathbb R$,
with $\phi$ a sigmoidal function $\lim_{s\rightarrow \pm \infty} \phi(s)\rightarrow \pm 1$, have been studied by adapting these methods, see e.g. \cite{kristiansen2018a,kristiansen2019b,kristiansen2020b,llibre2008a}. Systems of the form \cref{eq:nonsm} also occur in many different scientific contexts, for example -- as in electrical engineering and in biological systems -- due to $\phi$ modelling a switch \cite{dibernardo2008a,kristiansen2019b}. More indirectly, systems \cref{eq:nonsm} also occur through regularizations of piecewise smooth systems \cite{kristiansen2014a}, which are common in mechanics. For example, the simplest friction laws are piecewise smooth and systems of the form \cref{eq:nonsm} are therefore common in this area too \cite{bossolini2017b}. 

 As opposed to \cref{eq:slowfast}, the set $\Sigma=\{(x,y)\,:\,y=0\}$ is a discontinuity set (also called a switching manifold) of \cref{eq:nonsm}$_{\vare =0}$. By working in the extended space $(x,y,\vare)$, the reference \cite{kristiansen2018a,kristiansen2020b}, among others, gain smoothness through a cylindrical blowup of points $(x,0,0)$. Using this approach, recent references \cite{jel2020,jel2020b,kristiansen2019b,kristiansen2020b} have described the dynamics and bifurcations of systems like \cref{eq:nonsm} for all $0<\vare\ll 1$.

\subsection{Model}
In this paper, we consider the following model for the spring-mass-friction oscillator illustrated in \Cref{fig:brake_pad}:
\begin{equation}\label{eq:model0}
\begin{aligned}
 \dot x &= y,\\
 \dot y &=-x-\sin \theta-\mu_d \phi(\vare^{-1} y),\\
 \dot \theta &=\omega:=\vare \xi
\end{aligned}
\end{equation}
Notice, that the spring (or natural) frequency of the system \cref{eq:model0} has been normalized to $1$ whereas the forcing frequency is $\omega=\vare \xi$; we assume that $\xi \in \mathbb R$ (fixed) and $0<\vare\ll 1$ and the forcing $f(\theta)=\sin \theta$ is therefore ``slowly varying''. At the same time, following the result of \cite{bossolini2017b}, the friction force is modelled as a regularization of the nonsmooth stiction law, through the term $\mu_d \phi(\vare^{-1}y)$, involving the same scale $\vare$, by the following assumptions on the smooth function $\phi$:
\begin{enumerate}[label=({A}{{\arabic*}})]
 \item \label{it:A1} $\phi(s)\rightarrow \pm 1$ as $s\rightarrow \pm \infty$.
 \item \label{it:A2}$\phi$ is an odd function: $\phi(-s)=- \phi(s)$. 
 \item \label{it:A3} There is one $\delta>0$ and a 
 $$\mu_s>\mu_d,$$
  such that $\phi'(s)>0$ $\Leftrightarrow$ $s\in (-\delta,\delta)$, $\phi'(\delta)=0$ and $\phi(\delta)=\mu_s/\mu_d>1$ with $\phi''(\delta)<0$.
\end{enumerate}

The parameters $\mu_d$ and $\mu_s$ are proportional by a nondimensional factor $n$ to the dynamic and static friction coefficients $f_d$ and $f_s$, respectively, and $\mu_s=nf_s>\mu_d=nf_d$ therefore reflects the well-known fact that $f_s>f_d$, which can be interpreted as follows: the force required to keep the mass in motion is smaller than the force required to initiate the motion. The proportionality factor $n$ is a ratio of the normal force to the amplitude of the external forcing, see \cite{bossolini2017b} for further details on the derivation. According to the numbers in \cite{barrett1990a}, $\mu_s/\mu_d=f_s/f_d\sim 1-2$ for steel and others.

\begin{figure}[!t]
        \centering
        \includegraphics[width=0.45\textwidth]{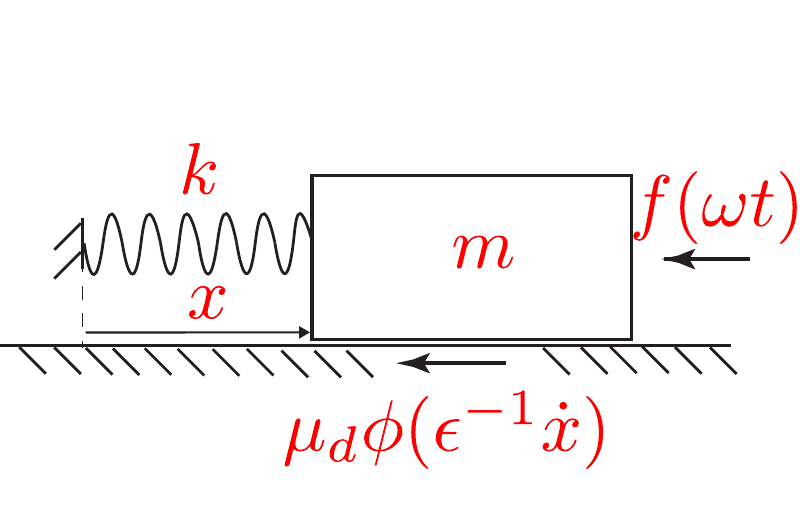}
        \caption{A spring-mass-friction oscillator. We consider a nondimensionalized version where both the mass $m$ and spring constant $k$ are scaled to $1$. We also use a time $t$ nondimensionalized by the natural frequency of the spring. The parameter $\omega$ is then the ratio of the frequency of the forcing $f$ and the natural frequency. In this paper we shall consider $\omega=O(\vare)$ with $\vare$ being a scale associated with the regularized stiction law for friction $\mu_d \phi(\vare^{-1} y)$. }
        \label{fig:brake_pad}
\end{figure} 

We illustrate $\phi$ in \cref{fig:regul}.
\begin{figure}[!t]
        \centering
        \includegraphics[width=0.45\textwidth]{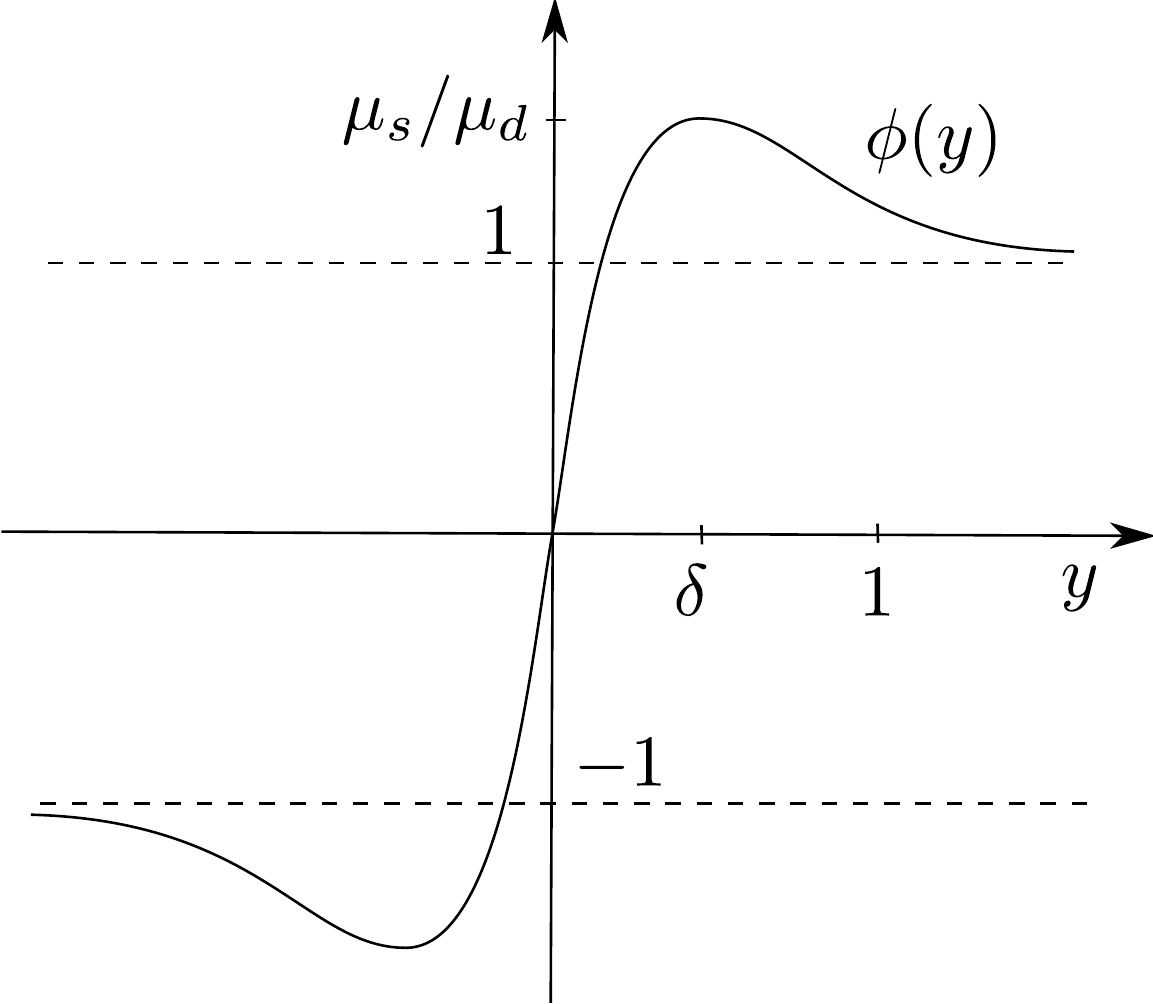}
        \caption{Regularization function satisfying 
\ref{it:A1}, \ref{it:A2} and \ref{it:A3} with $\phi'(\pm \delta)=0$ and where $\phi(y)\rightarrow \pm 1$ as $y\rightarrow \pm \infty$.}
        \label{fig:regul}
\end{figure}
We will also need the following technical assumption on $\phi$:
\begin{enumerate}[resume*]
 \item \label{it:A4} There exists a $k\in \mathbb N$ and an $s_0>0$ such that $\phi(-s^{-1}) = -1-s^k \phi_-(s)$ for all $s\in [0,s_0]$ where $\phi_-:[0,s_0]\rightarrow \mathbb R_+$ is smooth with $\phi_-(0)>0$. 
\end{enumerate}
Notice that the equations \cref{eq:model0} are symmetric with respect to 
\begin{align*}
 \mathbb S:\quad (x,y,\theta)\mapsto (-x,-y,\theta+\pi).
\end{align*}
In fact, most of our results generalize to any forcing $f(\theta)$ with $f(\theta+\pi)=-f(\theta)$ so that the system remains symmetric with respect to $\mathbb S$. We primarily focus on $f(\theta)=\sin \theta$ for simplicity.

System \cref{eq:model0} is a combination of \cref{eq:slowfast} and \cref{eq:nonsm}, in the sense that $\theta$ is slowly varying by the assumption $\omega=O(\vare)$. At the same time, \cref{eq:model0} limits as $\vare\rightarrow 0$ to a simple PWS system:
\begin{align} \label{eq:pwsstick}
\begin{cases}
x' &\hspace{-.8pc}= y,\\
y' &\hspace{-.8pc}= -x-\sin \theta \mp \mu_d,\\
\theta' &\hspace{-.8pc}= 0.\end{cases} 
\end{align}
for $y\gtrless 0$, respectively, having $\Sigma: \,y=0$ as a switching manifold. Within $\Sigma$, we find that $x=-\sin \theta-\mu_d$ is a curve of tangencies (fold line) for the $y>0$ system whereas $x=-\sin \theta+\mu_d$ is curve of tangencies (fold line) for the $y<0$ system. Both fold lines (red and blue in \Cref{fig:pws}, respectively) are \textit{invisible} \cite{jef}. Trajectories outside $\Sigma$ are therefore simple arcs of a half circle. These sets are also red and blue, respectively, in \Cref{fig:pws}.

In the context of \cref{eq:model0}, special interest lies in the existence of different limit cycles: An $\varepsilon$-family of limit cycles $\Gamma_\varepsilon$ with $\lim_{\varepsilon\rightarrow 0}\gamma_{\varepsilon}$ well-defined is said to be...
\begin{itemize}
\item \textit{pure-slip} if $\lim_{\varepsilon\rightarrow 0} \Gamma_\varepsilon$ intersects $\Sigma$ in a discrete set of points.
\item \textit{pure-stick} if $\lim_{\varepsilon\rightarrow 0} \Gamma_\varepsilon \subset \Sigma$.
\item \textit{stick-slip} otherwise.
\end{itemize}
Since $\Gamma_\varepsilon$ has to intersect $y=0$ for it to be a periodic orbit, a stick-slip limit cycle enters $y\gtrless \pm c$ for $c>0$ small for all $0<\varepsilon\le \varepsilon_0(c)$ small enough but $\lim_{\varepsilon\rightarrow 0} \Gamma_\varepsilon\cap \Sigma$ is not discrete. 
There will be no pure-slip \cref{eq:model0} for all $0<\vare\ll 1$ and focus will therefore be on pure-stick and stick-slip orbits. 
\begin{figure}[!t]
        \centering
        \includegraphics[width=0.45\textwidth]{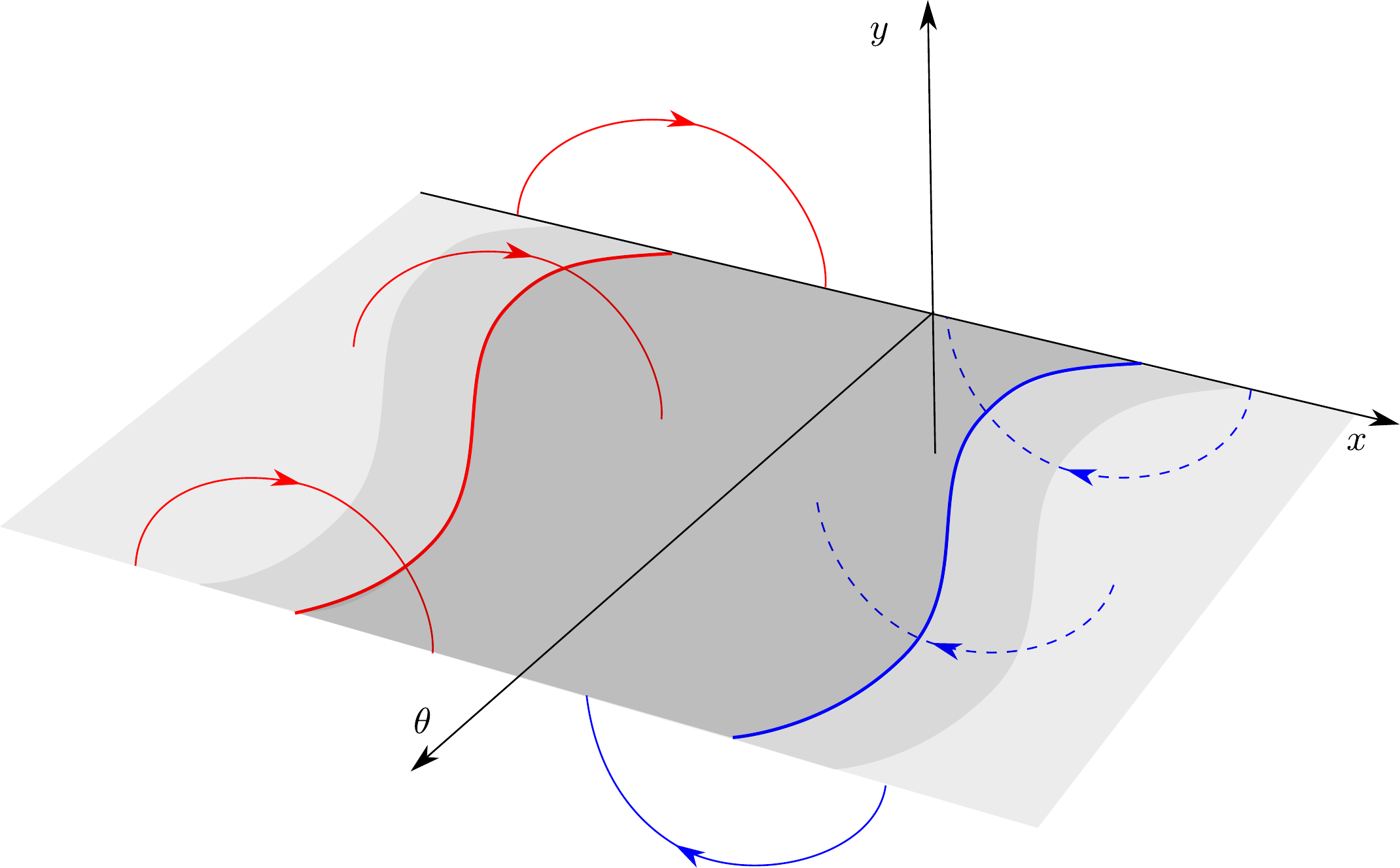}
        \caption{The PWS system. The switching manifold $\Sigma$ has three shades of grey. The darkest band is bounded by the fold lines in blue and red. On either side of this band, there is a different smaller band which is related to the stiction law. In particular, according to this law trajectories ``stick'' to $\Sigma$ until they leave these bands. }
        \label{fig:pws}
\end{figure}

\subsection{Background}

The system \cref{eq:model0} was studied in \cite{bossolini2017b,bossolini2020} for $\omega=\mathcal O(1)$ (using a slightly different scaling) with the main focus on the connection between the smooth system and the PWS system limit with the following rule of stiction on $\Sigma$: If $y(0)=0$, $x(0) +\sin \theta(0)\in (-\mu_s,\mu_s)$ then $y(t)=0$ until $x(t) +\sin \theta(t)\notin (-\mu_s,\mu_s)$. 
The authors defined a physical meaningful notion of solutions at the PWS level (\textit{stiction solutions}) and showed that these solutions could be forward nonunique (\textit{singular stiction solutions}). They also demonstrated that there are special (canard) solutions of the regularized system \cref{eq:model0} for $\omega=\mathcal O(1)$ that produce solutions that are not \textit{stiction solutions \cite[Definition 4.3]{bossolini2017b}}, but yet they appear robustly for any regularization function satisfying \ref{it:A1}-\ref{it:A4}. These canard solutions provide a resolution of the nonuniqueness  at the PWS level. In fact, the dynamics is for $\omega=\mathcal O(1)$ qualitatively independent of $\phi$. On the other hand, \cite{bossolini2017b} also performed numerical computations that showed that there are families of stick-slip periodic orbits which are connected for the smooth system but disconnected for the PWS one. The connectedness of these cycles occur for $\omega$ small enough due to a fold bifurcation, see \Cref{fig:coexistence_bif} (and the caption for further details) which reproduces the results in \cite{bossolini2017b} in this parameter regime. \cite{bossolini2017b} shows that there can be no stick-slip orbits for $\xi< \delta$ for all $0<\vare\ll 1$ but $\xi=\delta$ is only a lower bound for the fold shown in \Cref{fig:coexistence_bif} and it does not explain the main mechanism for the bifurcation. In this paper, we will therefore focus on 
\begin{align}
 \xi>\delta,\label{eq:xi_delta_cond}
 \end{align}
 and describe the limit $\omega=\vare \xi$ as $\vare\rightarrow 0$ using GSPT and blowup. This limit is inaccessible at the PWS level where the orbits within $y=0$ are independent of $\omega$. Our analysis reveal a simple structure that allows for an almost complete description of the long term dynamics in this limit including a simple geometric explanation for the fold bifurcation of limit cycles.  We will see that in this limit, which can only be resolved at the regularized level, the dynamics depend upon $\phi$ at a qualitative level. Interestingly, the bifurcation of limit cycles that we describe is also shown to be associated with the existence of a horseshoe, in a new general (canard-based) mechanism which we lay out below, see \Cref{thm:main2}. We will use $\xi$ as our primary bifurcation diagram and restrict attention to parameters $\mu_s$, $\mu_d$ and $\delta$ (and further details on $\phi$) that are consistent with the behaviour observed in \cite{bossolini2017b,bossolini2020} (we formalize this in the assumption \ref{it:A5} below). Our results resolve all problems on the stiction model that were left open in \cite{bossolini2017b,bossolini2020}.  

The stiction oscillator \cref{eq:model0} have also been studied at the PWS level in many other references, see e.g. \cite{csernak2006a, hinrichs1998a,licsko2014a}. For example, the references \cite{hinrichs1998a,licsko2014a} perform accurate numerical computations and demonstrate routes to chaotic dynamics through period doubling cascades within $\omega\in (0,1)$. The case when $\mu_s=\mu_d$ is the most classical one. Here a lot more is known analytically about existence of periodic orbits and bifurcations, see e.g. \cite{kowalczyk2008a,guardia2010a,kunze}. This case also lends itself to the theory of Filippov system \cite{filippov1988a,dibernardo2008a}. In \cite{dibernardo}, for example, the authors connect the onset of chaotic dynamics to a PWS grazing sliding bifurcation. Our results complement these existing results, insofar that we describe bifurcation of periodic orbits and new routes to chaotic dynamics for a regularized friction model in a limit that is inaccessible at the nonsmooth level. The results also provide a description of the onset of oscillatory dynamics as we go from $f=\text{const}$ (which corresponds to $\omega=0$) to $f$ periodic.

\subsection{Outline}
The article is organized as follows: In \cref{sec:blowup} we describe our method, based upon blowup and GSPT, to study \cref{eq:model0} for all $0<\varepsilon\ll 1$. Then in \cref{sec:pos} we put this to use and describe simple geometric conditions that ensure existence of periodic orbits of both stick-slip type and pure-stick. In \cref{sec:chaos} these geometric conditions lead us to formulate a new canard-based mechanism for horseshoe chaos. This new mechanism is reminiscent of the chaos in the forced van der Pol \cite{haiduc2009a} insofar that it relates to a folded saddle singularity, but the global geometry and the details are different. 

\section{The blowup approach}\label{sec:blowup}
To describe the full dynamics of \cref{eq:model0} in the limit $\vare\rightarrow 0$, we follow \cite{kristiansen2018a,kristiansen2020b}: We augment $\vare'=0$ and consider the fast time scale:
\begin{equation}\label{eq:model0_ext}
\begin{aligned}
 \dot x &= \vare y,\\
 \dot y &=\vare\left(-x-\sin \theta-\mu_d \phi(\vare^{-1} y)\right),\\
 \dot \theta &=\vare \omega=\vare^2 \xi,\\
 \dot \vare &=0.
\end{aligned}
\end{equation}
obtained by multiplying the right hand side of \cref{eq:model0} by $\vare$. 
For this system, every point $(x,y,\theta,0)$ is an equilibrium, but $y=0$ is extra singular due to lack of smoothness. We therefore perform a blowup transformation:
\begin{align}\label{eq:blowup0}
 (r,(\bar y,\bar \vare))\mapsto \begin{cases} 
y &=r\bar y,\\
\vare &= r\bar \vare,
                                 \end{cases}
\end{align}
for $(\bar y,\bar \vare)\in S^1$ and $r\ge 0$. In this way, we gain smoothness of the resulting vector-field $\overline X$ on $(x,\theta,r,(\bar y,\bar \vare))\in \mathbb R\times \mathbb T \times [0,\infty)\times S^1$, obtained by pull-back of \cref{eq:model0_ext} through \cref{eq:blowup0}. Moreover, $\bar \vare\ge 0$ is a common factor of $\overline X$ and consequently $\widehat X :=\bar \vare^{-1} \overline X$ will have improved hyperbolicity properties. It is $\widehat X$ that we study in the following. We illustrate the result in \Cref{fig:blowup1}. 

\begin{figure}[!t]
        \centering
        \includegraphics[width=0.75\textwidth]{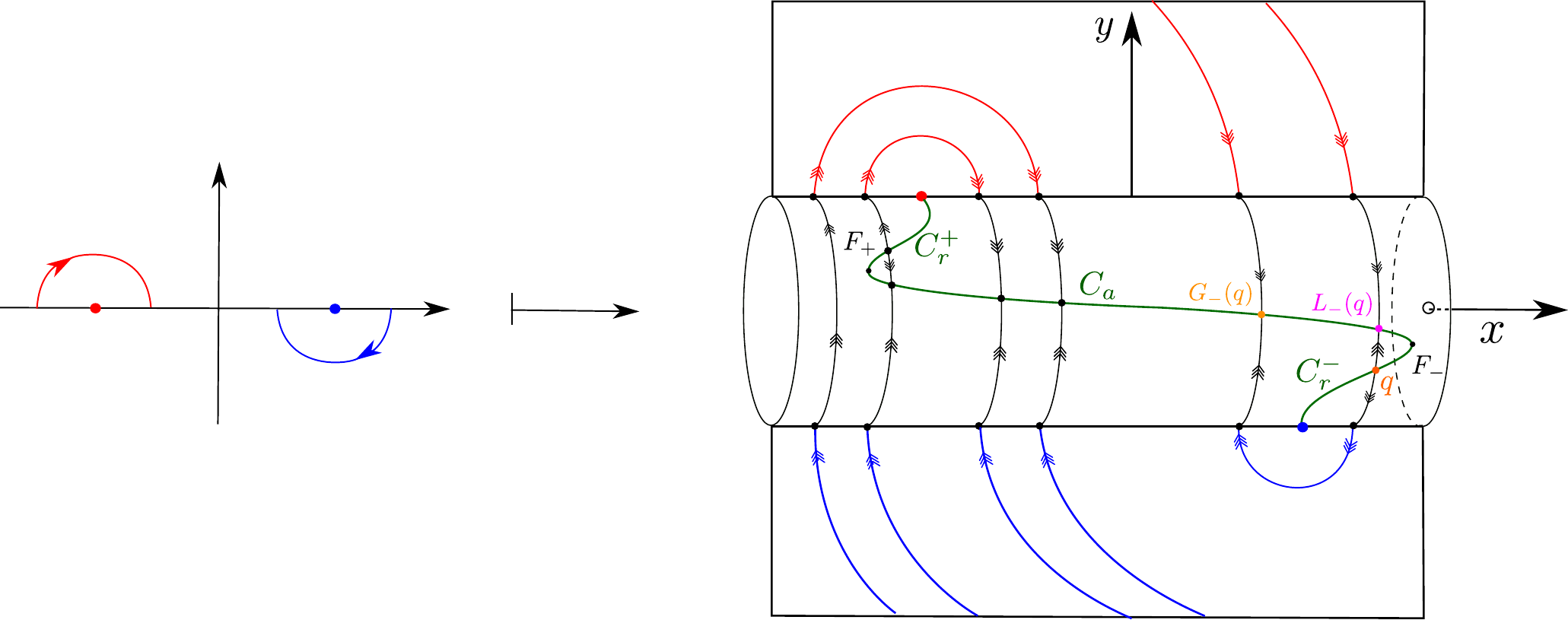}
        \caption{The result of blowing up the degenerate set $\Sigma\times \{0\}$ of \cref{eq:model0_ext}. The figure is illustrated in a section $\theta=\text{const}$. On the cylinder we find a critical manifold $C=C_r^+\cup F_+\cup C_a\cup F_- \cup C_r^-$ in green with two fold lines $F_\pm$ separating the attracting sheet $C_a$ from two repelling ones $C_{r}^\pm$. From the repelling sheets, the blowup approach reveals a return mechanism from $C_r^\pm$ to $C_a$.   }
        \label{fig:blowup1}
\end{figure}

To study $\widehat X$ and perform calculations, we use directional charts. In particular, the scaling chart, obtained by setting $\bar \epsilon=1$
\begin{align*}
 y &=r_2 y_2,\\
 \vare &=r_2,
\end{align*}
with chart-specific coordinates $(r_2,y_2)$, 
produces the following slow-fast equations:
\begin{equation}\label{eq:scaling}
\begin{aligned}
 \dot x &= \vare^2 y_2,\\
 \dot y_2 &=-x-\sin \theta -\mu_d \phi(y_2),\\
 \dot \theta &= \vare^2 \xi,
\end{aligned}
\end{equation}
upon eliminating $r_2=\vare$. Notice that \cref{eq:scaling} is a slow-fast system \cref{eq:slowfast} with
$\vare^2\ge 0$ as the singular perturbation parameter. In slow-fast theory \cite{kuehn2015a}, \cref{eq:scaling} is called the fast system. If $t$ denotes the (fast) time in \cref{eq:scaling} we introduce the slow time $\tau = \vare^2 t$ so that 
\begin{equation}\label{eq:scaling2}
\begin{aligned}
 x' &= y_2,\\
 \vare^2 y_2' &=-x-\sin \theta -\mu_d \phi(y_2),\\
 \theta' &=  \xi,
\end{aligned}
\end{equation}
with respect to this new time. The system \cref{eq:scaling2} is called the slow system.
These systems are obviously topologically equivalent for all $\vare>0$, but the $\vare=0$ limits are not. In particular, setting $\vare=0$ in \cref{eq:scaling} gives the layer problem 
\begin{equation}\label{eq:layer}
\begin{aligned}
 \dot x &= 0,\\
 \dot y_2 &=-x-\sin \theta -\mu_d \phi(y_2),\\
 \dot \theta &= 0,
\end{aligned}
\end{equation}
while $\vare=0$ in \cref{eq:scaling2} gives the reduced problem 
\begin{equation}\label{eq:reduced}
\begin{aligned}
 x' &= y_2,\\
 0 &=-x-\sin \theta -\mu_d \phi(y_2),\\
 \theta' &=  \xi.
\end{aligned}
\end{equation}
In the following, we focus on \cref{eq:layer} and delay the analysis of \cref{eq:reduced} to \cref{sec:reduced}.

For \cref{eq:layer} the set $C$ defined by 
\begin{align}\label{eq:exprC}
 x = -\sin \theta-\mu_d \phi(y_2),\quad \theta\in \mathbb T,\,y_2\in \mathbb R,
\end{align}
is a critical manifold. By linearizing \cref{eq:layer} about any point $(x,y_2,\theta)\in C$ we obtain one single nontrivial eigenvalue:
\begin{align}\label{eq:lambda}
 \lambda(y_2):=-\mu_d \phi'(y_2).
\end{align}
Consequently, the sets $F_\pm := C\cap \{y_2=\pm \delta\}$ are nonhyperbolic fold lines where $\lambda(y_2)=0$. These sets divide $C$ into an attracting sheet:
\begin{align*}
 C_a:=C\cap \{y_2 \in (-\delta,\delta)\},
\end{align*}
where $\lambda(y_2)<0$,
and two repelling sheets:
\begin{align*}
 C_{r}^\pm:=C \cap \{y_2\gtrless \pm \delta\},
\end{align*}
where $\lambda(y_2)>0$.

To connect the analysis of the layer problem with the PWS system, we use the separate charts corresponding to $\bar y=\pm 1$. $\bar y=1$ being identical (in fact we can just apply the symmetry $\mathbb S$) we focus on $\bar y=-1$, where we introduce the chart-specific coordinates $(r_1,\vare_1)$ defined by
\begin{align*}
 y &=- r_1,\\
 \vare &=r_1 \vare_1.
\end{align*}
Notice that we can change coordinates by
\begin{align*}
 y_2 = -\vare_1^{-1}, r_2 = r_1\vare_1.
\end{align*}
Inserting this into \cref{eq:model0} gives
\begin{align*}
 \dot x &= -r_1^2,\\
 \dot r_1 &= r_1 \left(x+\sin \theta+\mu_d (-1-\vare_1^k \phi_-(\vare_1))\right),\\
 \dot \theta &= r_1^2 \vare_1 \xi,\\
 \dot \vare_1 &= -\vare_1\left(x+\sin \theta+\mu_d (-1-\vare_1^k \phi_-(\vare_1))\right),
\end{align*}
where by assumption \ref{it:A4}, we have used that $\phi(\vare_1^{-1})=-1-\vare_1^k \phi_-(\vare_1)$. 
We summarise the properties of this system in \Cref{fig:charty1}. Notice, that for this system, $r_1=\vare_1=0$ is a set of equilibria. In particular, the linearization about any point $(x_0,0,\theta_0,0)$ has only two non-zero eigenvalues whenever $x_0+\sin \theta_0-\mu_d\ne 0$. This gives a saddle structure, as shown in the figure, with heteroclinic connections between points $(x_0,0,\theta_0,0)$ with $x_0+\sin \theta_0-\mu_d>0$ and $(x_1,0,\theta_0,0)$ with 
\begin{align*}
 x_1 &= 2\mu_d-2\sin \theta_0-x_0.
\end{align*}
This follows from setting $\vare_1=0$ and dividing out the common factor $r_1$: 
\begin{figure}[!t]
        \centering
        \includegraphics[width=0.65\textwidth]{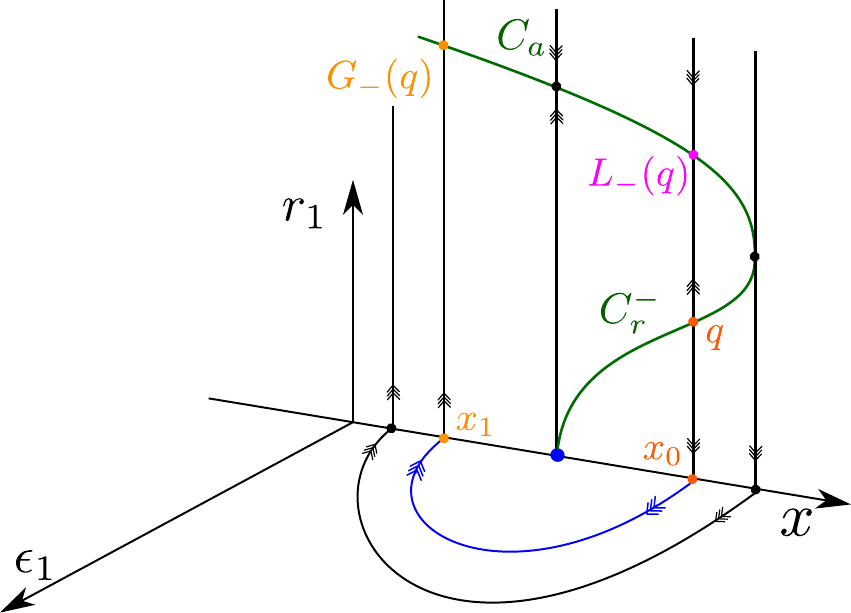}
        \caption{Dynamics in the chart $\bar y=-1$.  }
        \label{fig:charty1}
\end{figure}
%
%
%
%
\begin{align*}
 x ' &=-r_1,\\
 r_1' &= x+\sin \theta_0-\mu_d,
\end{align*}
which is just \cref{eq:pwsstick} within ${\theta=\theta_0}$ and upon setting $r_1=-y$. In particular, the forward orbit of $x(0)=x_0,r_1(0)=0$ returns to $r_1(T)=0$ after a time $T=\pi$ with $x_1:=x(T) = 2\mu_d-2\sin \theta_0 -x_0$.
\begin{remark}
Notice that the degenerate point $x=-\sin \theta_0+\mu_d$ is a fixed point of the associated mapping $x_0\mapsto x_1$. This point within $r_1=\vare_1=0$ can be blown up to a sphere (using \ref{it:A4}) in such a way that hyperbolicity is gained. We skip the details of such a blowup analysis, since it is not important for our purposes, and just illustrate the result in \Cref{fig:blowup2}. 
\end{remark}
Following this analysis, we use the hyperbolic structure to track orbits for $0<\vare\ll 1$ away from the critical manifold and deduce two things:
(1) Each point away from $C$ eventually reaches a neighborhood of $C_a$ for all $0<\vare \ll 1$. (2) Moreover, by patching together the analysis in $\bar y=\pm 1$ and $\bar \vare=1$ we obtain two maps from each 
%
%
repelling sheets $C_r^\pm $ to the attracting one $C_a$. We will refer to these maps as return maps since orbits will follow these upon leaving $C_a$. In particular, for $C_r^-$ we obtain one such a return mechanism by following the singular flow $y_2$-negative side of $C_r^-$; we describe this by a map $G_-$ from $C_r^-$ to $C_a$ given by
\begin{align*}
 G_-:(x_0,y_{20},\theta_0)\mapsto (x_1,y_{21},\theta_0),
\end{align*}
where $x_1= 2\mu_d-2\sin \theta_0-x_0$ as above and where
$y_{21}$ is the least positive solution of 
\begin{align*}
 x_1 = -\sin \theta_0-\mu_d \phi(y_{21}).
\end{align*}
We can easily rewrite this equation as
\begin{align}\label{eq:y21}
\phi(y_{21}) = -2-\phi(y_{20}),
\end{align}
from which it clearly follows that $G_-$ is well-defined.
We have.
\begin{lemma}
 $G_-$ is well-defined for all $(x_0,y_{20},\theta_0)\in C_r^-$ corresponding to points on the repelling sheet, i.e. 
 $x_0\in (-\sin \theta_0+\mu_d,-\sin \theta+\mu_s)$, $y_{20}<-\delta$. Furthermore, $G_-:C_r^-\rightarrow C_a$ is a diffeomorphism onto its image and the image value of $y_2$ satisfies $(-\delta,\delta)$.
\end{lemma}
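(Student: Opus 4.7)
The plan is to reduce $G_-$ to the single functional equation \cref{eq:y21} and then read off all three claims (well-definedness, image in $C_a$, diffeomorphism) from monotonicity properties of $\phi$ on the three intervals picked out by assumption \ref{it:A3}. Because every point of $C_r^-$ is uniquely parametrised by $(y_{20},\theta_0)$ via $x_0=-\sin\theta_0-\mu_d\phi(y_{20})$, and every point of $C_a$ is uniquely parametrised by $(y_{21},\theta_0)$ via $x_1=-\sin\theta_0-\mu_d\phi(y_{21})$, the map $G_-$ in these coordinates is simply $(y_{20},\theta_0)\mapsto(y_{21},\theta_0)$ with $y_{21}$ implicitly defined by \cref{eq:y21}. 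The $\theta_0$-component is trivial, so everything reduces to the scalar map $y_{20}\mapsto y_{21}$.

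First I would record the precise monotonicity of $\phi$. By \ref{it:A1}--\ref{it:A3}, $\phi$ is strictly decreasing on $(-\infty,-\delta)$ from the limit $-1$ at $-\infty$ to the value $\phi(-\delta)=-\mu_s/\mu_d<-1$ (using oddness and $\phi(\delta)=\mu_s/\mu_d$). Hence $\phi$ is a smooth diffeomorphism of $(-\infty,-\delta)$ onto $(-\mu_s/\mu_d,-1)$. Similarly, on $(-\delta,\delta)$, $\phi$ is a strictly increasing smooth diffeomorphism onto $(-\mu_s/\mu_d,\mu_s/\mu_d)$.

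Next I would verify that the right-hand side of \cref{eq:y21} lands in the range of $\phi|_{C_a}$. For $y_{20}\in(-\infty,-\delta)$ we have $\phi(y_{20})\in(-\mu_s/\mu_d,-1)$, so
\begin{equation*}
-2-\phi(y_{20})\in\bigl(-1,\;\mu_s/\mu_d-2\bigr).
\end{equation*}
Since $\mu_s/\mu_d>1$ by \ref{it:A3}, we have $-1>-\mu_s/\mu_d$ and $\mu_s/\mu_d-2<\mu_s/\mu_d$, so this interval is contained in $(-\mu_s/\mu_d,\mu_s/\mu_d)$. Therefore \cref{eq:y21} has a unique solution $y_{21}\in(-\delta,\delta)$, namely $y_{21}=(\phi|_{(-\delta,\delta)})^{-1}(-2-\phi(y_{20}))$. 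This establishes well-definedness and shows that the image lies in $C_a$.

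The diffeomorphism claim then follows by the chain rule: the scalar part of $G_-$ is the composition of two smooth diffeomorphisms (the restrictions of $\phi$ above and their inverses) with the affine shift $u\mapsto -2-u$, so it is a smooth diffeomorphism onto its image, and appending the identity in $\theta_0$ gives the full statement. The only step that is not entirely mechanical is the inclusion of intervals in the previous paragraph, which hinges on the strict inequality $\mu_s>\mu_d$ from \ref{it:A3}; if one tried to relax this to $\mu_s=\mu_d$ the target interval would touch the boundary of the domain of $(\phi|_{C_a})^{-1}$ and $G_-$ would fail to map into the open attracting sheet. This is the only conceptual sensitivity in the argument.
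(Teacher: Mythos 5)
Your proposal is correct and follows essentially the same route as the paper: reduce $G_-$ to the scalar relation $\phi(y_{21})=-2-\phi(y_{20})$, use the sign of $\phi'$ from \ref{it:A3} to split $\phi$ into monotone branches, and check that $-2-\phi(y_{20})\in(-1,\mu_s/\mu_d-2)\subset(-\mu_s/\mu_d,\mu_s/\mu_d)$ via $\mu_s/\mu_d>1$. The paper's own proof is a one-line ``simple calculation'' containing exactly this interval arithmetic, so your write-up is a legitimate and more explicit elaboration of it (including the diffeomorphism step via composition of monotone branches, which the paper leaves unstated).
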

\begin{proof}
 Simple calculation, see also \Cref{fig:charty1}. In particular, by \cref{eq:y21} and since $\phi(y_{20})\in (-1,-\mu_s/\mu_d)$, we have that 
 \begin{align*}
  \phi(y_{21}) \in (-1,-2+\mu_s/\mu_d)\in (-1,\mu_s/\mu_d),
 \end{align*}
using that $\mu_s/\mu_d>1$ in the first inclusion.
\end{proof}
It follows that the $y_2$-negative side of the unstable manifold of $C_r^-$ is foliated by stable fibers with base points on $C_a$, according to the assignment $G_-$ and vice versa.
The map $G_+$ from $C_r^+$ to $C_a$ is given $G_+:=\mathbb S\circ G_- \circ \mathbb S$ using the symmetry $\mathbb S$. 

\begin{figure}[!t]
        \centering
        \includegraphics[width=0.75\textwidth]{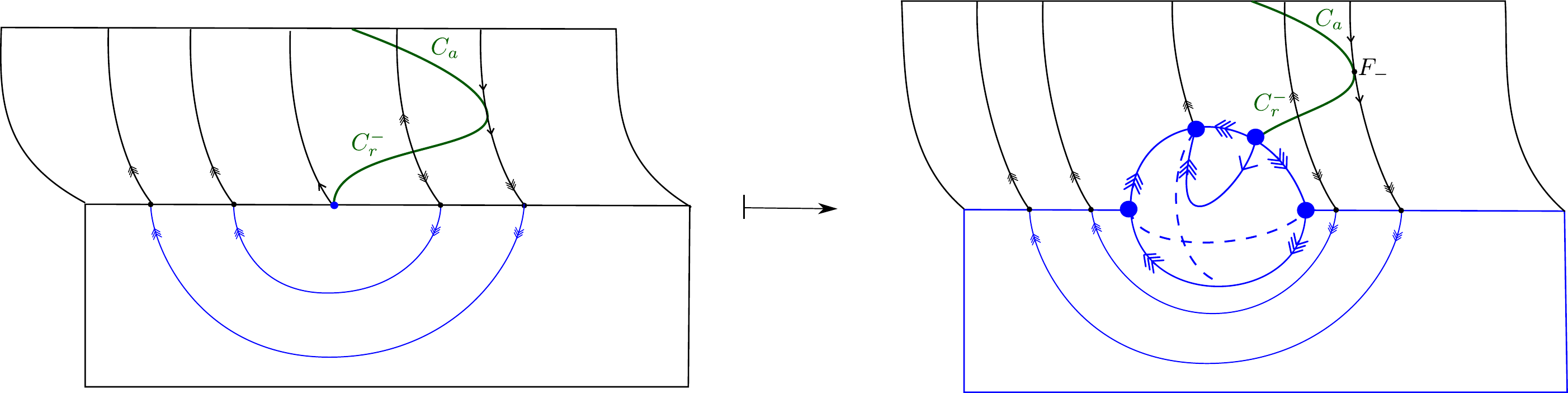}
        \caption{The result of blowing up a degenerate point $x=-\sin \theta +\mu_d$.   }
        \label{fig:blowup2}
\end{figure}

$G_-$ extends to the fold line $F_-$. Here the image is contained within $\{y_2=y_{2-}\}$ with $y_{2-}$ defined by 
\begin{align}\label{eq:y2m}
\phi(y_{2-})=-2+\mu_s/\mu_d,
\end{align}using \cref{eq:y21} and \ref{it:A3}. 

Clearly, as illustrated in \Cref{fig:charty1} there is also a simpler return mechanism from $C_r^-$, which can be described in the chart $\bar \vare=1$ only, where the flow follows the $y_2$-positive side of the unstable manifold. This mapping $L_-$ from $C_r^-$ to $C_a$ is given by 
\begin{align*}
 (x_0,y_{20},\theta_0)\mapsto (x_0,y_{21},\theta_0),
\end{align*}
with $(x_0,y_{20},\theta_0)\in C_r^-$ and where $y_{21}$ is the unique solution of 
\begin{align*}
 \phi(y_{21}) = \phi(y_{20}),\quad y_{21}>-\delta.
\end{align*}
Again, the mapping $L_+$ from $C_r^+$ to $C_a$ is given by the symmetry as $L_+=\mathbb S\circ L_-\circ \mathbb S$. 


\subsection{The reduced problem}\label{sec:reduced}
Next, we describe the reduced problem on $C$ working in the scaling chart $\bar \vare=1$, recall \cref{eq:reduced}. 
Seeing that $C$ is naturally parameterized by $y_2$ and $\theta$, we differentiate \cref{eq:exprC} with respect to the fast time and rewrite \cref{eq:reduced} as
\begin{equation}\label{eq:reducedyt}
\begin{aligned}
 \mu_d \phi'(y_2) y_2' &=-y_2 -\xi \cos \theta,\\
 \theta'&=\xi,
\end{aligned}
\end{equation}
on \cref{eq:exprC}. The fold lines $F_\pm$ on $C$ where $\phi'(y_2)=0$ are singular for \cref{eq:reducedyt}.
We study these equations in the classical way \cite{szmolyan2001a} using desingularization through multiplication of the right hand side by $\xi^{-1} \mu_d \phi'(y_2)$. This gives
\begin{equation}\label{eq:reddes}
\begin{aligned}
y_2' &=-\xi^{-1} y_2 -\cos \theta,\\
 \theta'&= \mu_d \phi'(y_2).
\end{aligned}
\end{equation}
These systems are topologically equivalent on $C_a$ and topologically equivalent upon time-reversal on $C_r^\pm$. The equilibria of \cref{eq:reddes} are folded singularities \cite{szmolyan2001a} and given by $(y_2,\theta)=(-\delta,\theta_+(\xi^{-1}))$, $(y_2,\theta)=(\delta,\theta_-(\xi^{-1}))$ and
\begin{align*}
 z_-:\,(y_2,\theta) = (-\delta,\theta_-(\xi^{-1})),\quad z_+:\,(y_2,\theta) = (\delta,\theta_-(\xi^{-1})),
\end{align*}
where
\[\theta_\pm(\xi^{-1}):= \cos^{-1}  (\mp \delta\xi^{-1}).\]
At $\xi=\delta$ there is a fold bifurcation of folded singularities, but as mentioned in the introduction, see \cref{eq:xi_delta_cond}, we restrict attention to $\xi>\delta$ where there are four folded singularities. Two of these folded singularities $z_\pm \in F_\pm$, related by the symmetry $\mathbb S$, are of saddle-type whereas the two remaining ones are either folded foci or folded nodes, depending on $\xi$. In particular, a simple calculation shows that there is a $\xi_{dn}>\delta$ such that these latter folded singularities are folded foci for all $\xi>\xi_{dn}$; this is the case shown in \Cref{fig:reduced}. 

The fold lines $F_\pm$ consist of the folded singularities and regular fold points. Let $J_-$ be the set of regular jump points on the fold line $F_-$ where $\xi^{-1}\delta- \cos \theta<0$, so that $y_2$ is decreasing for the reduced flow. We define $J_+$ in a similar way. In fact, it is given by $J_+=\mathbb SJ_-$. For these jump points, $G_\pm$ give a return mechanism to $C_a$, which we illustrate in blue in \Cref{fig:reduced}.

Consider the folded saddle $z_-$ on $F_-$. This point, being a hyperbolic saddle for \cref{eq:reddes}, produces a (singular) vrai canard $\gamma_-$  as a stable manifold $W^s(z_-)$ for \cref{eq:reddes}. This canard connects $C_a$ with $C_r^-$ in finite forward time for \cref{eq:reduced}. Let $\tilde \gamma_-$ denote the subset of $\gamma_-$ on $C_r^-$. For these sets of points, $G_-$ and $L_-$ produce two different return mechanisms to $C_a$ by following the associated unstable fibers on either side of $C_r^-$. These set of points are illustrated in \Cref{fig:reduced} along with their symmetric images for the set of points on $\tilde \gamma_+$ with $\gamma_+=\mathbb S\gamma_-$ being the vrai canard of the folded saddle $z_+$ on $F_+$ \cite{szmolyan2001a}.

\begin{figure}[!t]
     \centering
        \begin{subfigure}{.49\textwidth}\caption{ }\label{fig:reducedPWS}
  \centering
\includegraphics[scale=0.415]{./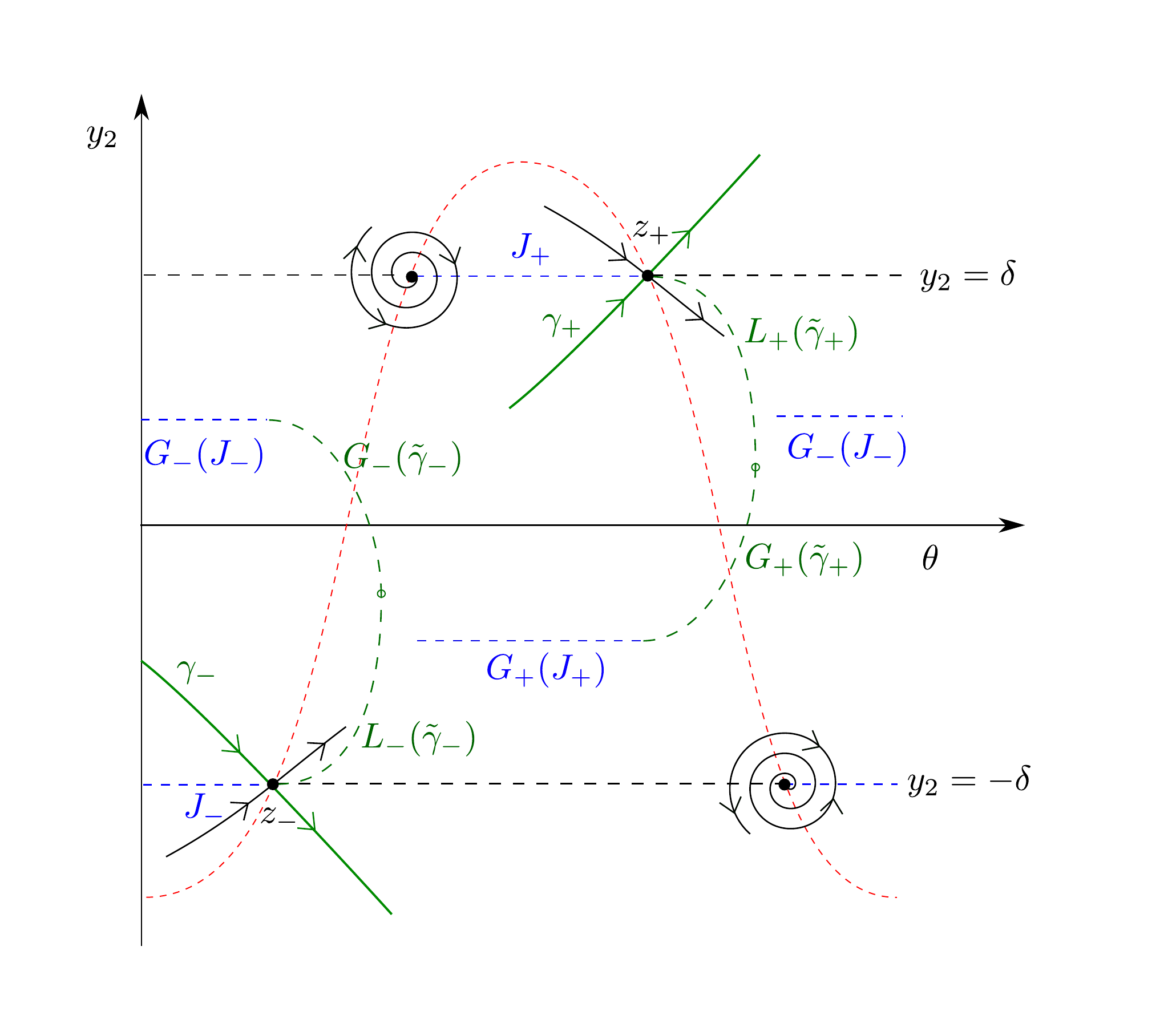}\end{subfigure}
\begin{subfigure}{.49\textwidth}\caption{ }\label{fig:reducedxyt}
  \centering
 \includegraphics[scale=0.415]{./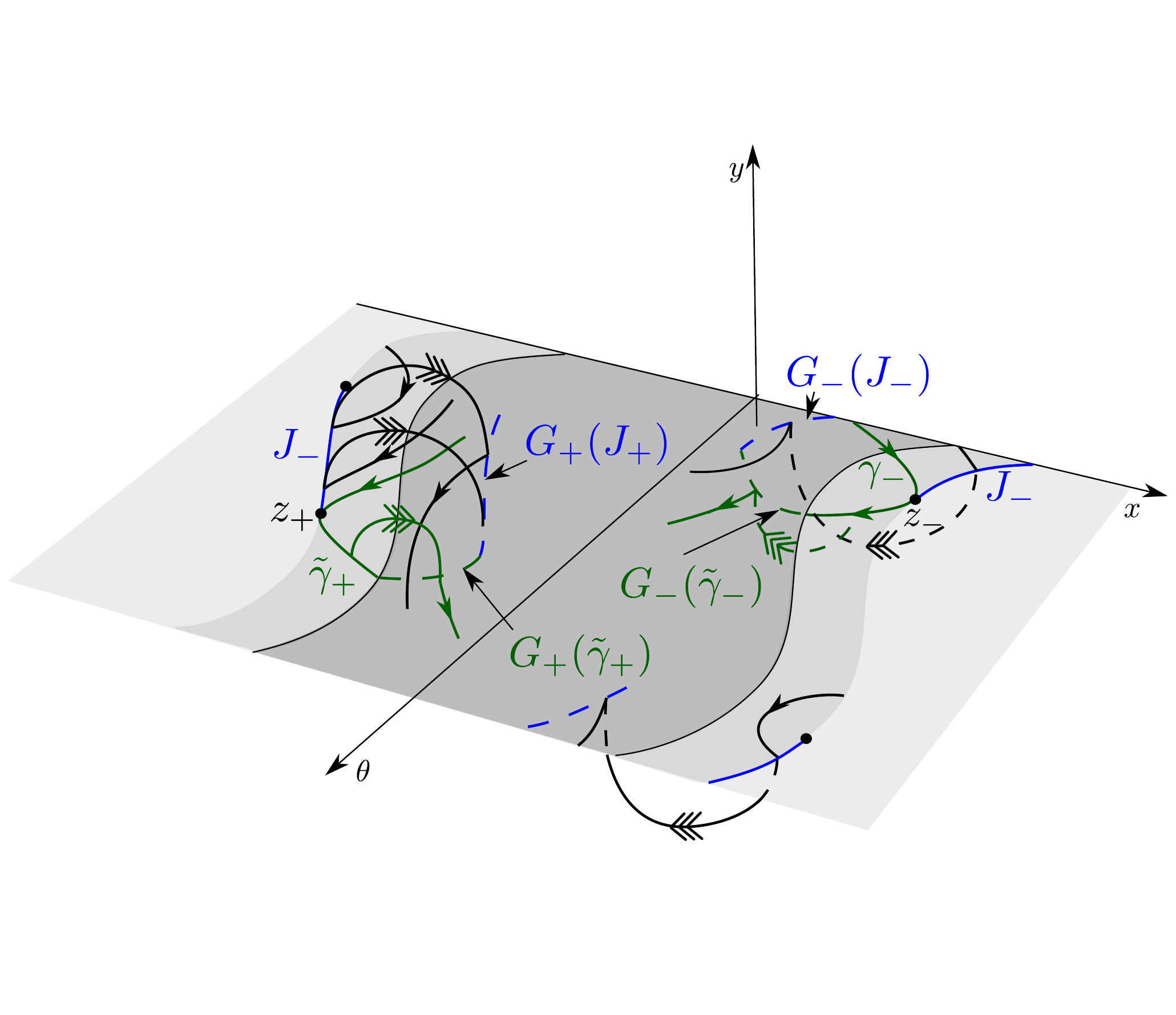}\end{subfigure}
        \caption{In (a): the reduced problem. For $\xi>\delta$ there are four folded singularities, two of which $z_\pm$ are folded saddles whereas the two remaining ones are either folded foci or folded nodes, both stable for the desingularized system \cref{eq:reddes}. In (b): the reduced problem upon blowing down to the $(x,y,\theta)$-space within $\vare=0$ together with the returns obtained by the PWS flows. The flow within $y=0$ is slow in the sense that it is obtained  on a separate (slow) time scale. Orbits within $y\gtrless 0$ are therefore fast in comparison and are therefore given tripple headed arrows whereas in contrast orbits on $y=0$ have a single arrow. Consequently, the limit cycles we obtain for $0<\vare\ll 1$ are generalized relaxation oscillations. Notice also that in this blow down version in (b) we cannot distinguish $C_a$ from $C_{r}^\pm$ on the out-most bands, recall the discussion of these in \Cref{fig:pws}. }
        \label{fig:reduced}
\end{figure}

\subsection{The dependency on $\mu_d$ and $\mu_s$}
The global dynamics of the system depend on the relative position of $\gamma_\pm$, $G_\pm(J_\pm)$, $G_\pm(\tilde \gamma_\pm)$ and $L_\pm(\tilde \gamma_\pm)$ on $C_a$. We can describe $\gamma_\pm$ in the limit $\xi\rightarrow \infty$ of \cref{eq:reduced} since the system is Hamiltonian there with $H(\theta,y_2) = \mu_d \phi(y_2)+\sin \theta$. In particular, a simple calculation shows that for $\mu_s>1$, the stable and unstable manifolds $\gamma_\mp$ of $z_\mp$ with $\theta_\mp(0) =\pi/2,3\pi/2$, respectively, coincide on $C_a$ in this limit given that $\theta\in \mathbb T$. (There is a heteroclinic bifurcation at $\mu_s=1$ where $\gamma_\pm$ connects $z_\pm$.) Moreoever, upon using that the minimum and maximum values of $y_2$ along $\gamma_\pm$ occur at $\theta=\pi/2$, $\theta=3\pi/2$ for $\xi\rightarrow \infty$, it is also straightforward to show that these manifolds remain within $y_2\gtrless 0$, respectively, whenever $\mu_s>2$. Next, using $H$ we can also show that the stable manifold $\gamma_+$ of $z_-$ in this limit intersects $G_-(J_-)$ if and only if $\mu_d<1$, doing so transversally in the affirmative case. (To prove this we simply compare the minimum $y_2$-value of $\gamma_+$ at $\theta=\pi/2$, given by $\mu_d\phi(y_2)=\mu_s-2$ using $H$, with the value of $y_{2-}$, recall \cref{eq:y2m}.)

Consider the $(y_2,\theta)$-plane and let $\Upsilon$ denote the section at $\{y_2=y_{2-}\}$ with $$\theta\in (-\cos^{-1}(-\xi^{-1} y_{2-}),\cos^{-1}(-\xi^{-1} y_{2-})),$$ such that $\dot y_2<0$ on $\Upsilon$, see \cref{eq:reddes}. Hence $G_-(J_-)\subset \Upsilon$ and these sets coincide when $\xi\rightarrow \infty$. Now, suppose $\mu_s>2$ and $\mu_d<1$. Then upon combining the previous results, we have that $\gamma_+\subset \{y_2>0\}$ and that $\gamma_+$ transversally intersects $\Upsilon$ in the limit $\xi\rightarrow \infty$. We now continue this intersection point $(y_{2-},\theta_{\Upsilon}(\xi^{-1}))$ of $\gamma_+$ and $\Upsilon$ for $\xi^{-1}>0$ small enough. Here $\theta_\Upsilon(\xi^{-1})$ is smooth by the implicit function theorem. In fact, a simple Melnikov calculation -- using the fact that the integrand $$(\dot y_2,\dot \theta)\wedge \partial_{\xi^{-1}} (\dot y_2,\dot \theta)\vert_{\xi^{-1}=0}=y_2\dot \theta>0,$$ of the Melnikov integral has one sign by our assumption on $\mu_s>2$ -- shows that $\theta_\Upsilon'(0)>0$. Moreover, using the same argument, $\theta_\Upsilon'(\xi^{-1})>0$ for any $\xi>0$ for which the intersection of $\gamma_+$ with $\Upsilon$ exists. Consequently, there is a unique $\xi_{pd}>0$ such that $\theta_\Upsilon(\xi_{pd}^{-1}) = \theta_-(\xi_{pd}^{-1})$, i.e. where $\gamma_+$ intersects $\Upsilon$ in the point $G_-(z_-)$, see \Cref{fig:xipdph} for an illustration of this situation. (Notice in particular that the $\theta$-coordinate of $z_-$, $\theta_-(\xi^{-1})$, moves in the opposite direction since $\theta_\Upsilon'(\xi^{-1})<0$). Since $y_2$ remains negative on $\tilde \gamma_-$ on the repelling side of $C$, the Melnikov-calculation also provides a monotonicity condition on the set $G(\tilde \gamma_-)$, which in turn for $\mu_d<1$ and $\mu_s>2$ implies that there is also a unique $\xi_{t}>\xi_{pd}$, such that $\gamma_+$ for $\xi=\xi_{t}$ intersects $\Upsilon$ transversally while being tangent to the set $G_-(\tilde \gamma_-)$ at a point $G_-(\tilde \gamma_-)\cap \gamma_+$. 

In the following, we consider all $\phi$-functions, specifically all $\delta>0$, $\mu_d<1$ and all $\mu_s>1$, such that $\theta_\Upsilon(\xi^{-1})$, defined as the $\theta$-coordinate of the intersection of $\gamma_+$ with $\Upsilon$, which is well-defined for all $\xi^{-1}$ small enough, satisfies these properties:
\begin{enumerate}[resume*]
 \item \label{it:A5} There exist (a) a $\xi_0>0$ such that $\theta_\Upsilon(s)$ is well-defined for all $s\in [0,\xi_0^{-1})$ and satisfies $\theta_\Upsilon'(s)>0$, (b) a $\xi_{pd}<\xi_0$ such that $\theta_\Upsilon(\xi_{pd}^{-1})=\theta_-(\xi_{pd}^{-1})$ and finally (c) a $\xi_t<\xi_{pd}$ such that $\gamma_+$ for $\xi=\xi_{t}$ is tangent to $G_-(\tilde \gamma_-)$ a single point $G_-(\tilde \gamma_-)\cap \gamma_+$,  see \Cref{fig:xitph} for an illustration of this situation.
\end{enumerate}
From the preceding discussion we have (we leave out further details for simplicity):
\begin{lemma}
 $\mu_s>2$ and $\mu_d<1$ are sufficient conditions for \ref{it:A5} to hold for any $\delta>0$ (and any further details of $\phi$).
\end{lemma}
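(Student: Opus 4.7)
\emph{Proof plan.} The preceding discussion has already assembled most of the ingredients; the plan is to verify conditions (a), (b), and (c) of \ref{it:A5} in turn, with $\mu_s > 2$ and $\mu_d < 1$ held throughout. First I would work in the Hamiltonian limit $s := \xi^{-1} = 0$ of the desingularized reduced problem \cref{eq:reddes}, where the first integral $H(\theta, y_2) = \mu_d \phi(y_2) + \sin\theta$ yields an explicit level-set description of the stable manifold $\gamma_+$. The hypothesis $\mu_s > 2$ forces $\gamma_+$ to lie strictly in $\{y_2 > 0\}$, as can be checked by evaluating the level-set equation at the extremal angle $\theta = \pi/2$; the hypothesis $\mu_d < 1$ in turn forces $\gamma_+$ to cross $\Upsilon = \{y_2 = y_{2-}\}$ transversally at some $\theta_\Upsilon(0)$. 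The implicit function theorem then extends $\theta_\Upsilon(s)$ smoothly over a maximal interval $[0, \xi_0^{-1})$ on which this transversal intersection persists.

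I would then establish $\theta_\Upsilon'(s) > 0$ from a Melnikov calculation. Writing $\dot y_2 = -s y_2 - \cos\theta$ and $\dot\theta = \mu_d \phi'(y_2)$, one has $\partial_s \dot y_2 = -y_2$ and $\partial_s \dot\theta = 0$, so the Melnikov integrand reduces to
\begin{equation*}
(\dot y_2, \dot\theta) \wedge \partial_s(\dot y_2, \dot\theta) = -\dot\theta\,\partial_s \dot y_2 = y_2\, \mu_d \phi'(y_2),
\end{equation*}
which is strictly positive on $\gamma_+ \cap C_a$ since $y_2 > 0$ (by $\mu_s > 2$) and $\phi'(y_2) > 0$ on $C_a$. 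Because this integrand depends on the state only through $y_2$ and $\phi'(y_2)$, the same sign-definiteness propagates to the perturbed orbits for every $s \in [0, \xi_0^{-1})$, giving (a).

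For (b), I would combine the strict monotonicity of $\theta_\Upsilon(s)$ with that of $\theta_-(s) = \cos^{-1}(\delta s)$, which is strictly decreasing. A direct evaluation via $H$ places $\theta_\Upsilon(0) < \pi/2 = \theta_-(0)$, so the intermediate value theorem produces a unique crossing parameter $\xi_{pd}$ at which $\theta_\Upsilon(\xi_{pd}^{-1}) = \theta_-(\xi_{pd}^{-1})$. For (c), an analogous Melnikov argument applied along $\tilde\gamma_-$ on $C_r^-$ — where $y_2 < -\delta$ and $\phi'(y_2) < 0$, so the integrand $y_2 \phi'(y_2)$ is again positive — yields monotone motion of $G_-(\tilde\gamma_-) \subset \Upsilon$ in $s$ via \cref{eq:y21}; combined with the monotonicity of $\gamma_+$, this produces the unique first-tangency parameter $\xi_t$. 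The main obstacle I anticipate is ensuring that the Melnikov-based monotonicity holds globally on the full $s$-intervals needed for (b) and (c), not merely near $s = 0$; this reduces to verifying that the continuations of $\gamma_\pm$ and of $G_-(\tilde\gamma_-)$ remain within $\{y_2 \gtrless 0\}$ throughout, a robust consequence of the strict separation at $s = 0$ granted by $\mu_s > 2$.
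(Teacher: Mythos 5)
Your plan reproduces the paper's argument: the Hamiltonian limit $\xi^{-1}=0$ with first integral $H=\mu_d\phi(y_2)+\sin\theta$, the use of $\mu_s>2$ to confine $\gamma_+$ to $\{y_2>0\}$, the use of $\mu_d<1$ for the transversal crossing of $\Upsilon$, and the Melnikov integrand $y_2\dot\theta=y_2\mu_d\phi'(y_2)$ with your sign analysis on $C_a$ and on $C_r^-$ are exactly the paper's ingredients, and the concern you flag about global versus merely local positivity of the integrand is precisely the detail the paper elides with ``we leave out further details for simplicity.''

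One correction: $G_-(\tilde\gamma_-)\not\subset\Upsilon$. From \cref{eq:y21} one has $\phi(y_{21})=-2-\phi(y_{20})<-2+\mu_s/\mu_d=\phi(y_{2-})$ for $y_{20}<-\delta$, so $G_-(\tilde\gamma_-)$ lies strictly in $\{y_2<y_{2-}\}$; only its boundary point $G_-(z_-)$ lands on $\Upsilon$. It is $G_-(J_-)$ that sits in $\Upsilon$. The Melnikov-based monotone motion of $G_-(\tilde\gamma_-)$ in $s$ is still correct — since $G_-$ is $\xi$-independent, it inherits the monotone motion of $\tilde\gamma_-$ — but the tangency with $\gamma_+$ that defines $\xi_t$ occurs at a point in the interior of $C_a$, not on $\Upsilon$, so the set inclusion you wrote would misrepresent the geometry behind the tangency argument.
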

But the conditions in this lemma are (clearly) not necessary; it is even clear by a continuity argument that we for any $\mu_d<1$ can take $\mu_s>2-c$ for $c>0$ small enough. In \Cref{fig:thetaUps}, we numerically verify that \ref{it:A5} holds for the regularization function 
\begin{align}
 \phi(s) = \frac{x}{\sqrt{x^2+1}}\left(1+\frac{\alpha }{1+\beta x^2}\right),\label{eq:phie}
\end{align}
for which $k=2$, recall the assumption \ref{it:A4}, and where $\alpha$ and $\beta$ are given by the complicated expressions
\begin{align*}
\alpha &=  \delta^{2} \left( 2\delta^{2}+1 \right) -2{\frac {\delta^{3}
\sqrt {\delta^{2}+1}}{\mu}}
,\quad 
\beta = 2\delta \mu \left( \delta^{2}+1 \right)^{3/2}-4\delta^{2}
 \left(\delta^{2}+1 \right) +2{\frac {\delta^{3}\sqrt {\delta
^{2}+1}}{\mu}}
,
\end{align*}
ensuring that $\phi'(\pm \delta)=0$ and $\phi(\pm \delta) = \pm \mu$ with $\mu:=\mu_s/\mu_d$. This holds for any $\delta>0$ and any $\mu>1$. More specifically, in \Cref{fig:thetaUps}, we fix $\delta =0.6$, $\mu_s=1.1$ and for $10$ equidistributed values of $\mu_d$ between $0.4$ and $0.925$ we visualize the graph of the corresponding $\theta_\Upsilon$ (which we compute numerically  in Matlab using ODE45 and a simple shooting method).  We see that $\theta_\Upsilon$ is monotone and that there are two $\xi$-values, $\xi_{pd}$ (circles) and $\xi_t$ (squares) with the properties in \ref{it:A5} for each $\mu_d\in [0.4,0.925]$. See figure caption for further details. In \Cref{fig:xipdph}, \Cref{fig:xitpdph} and \Cref{fig:xitph} we show the phase portraits (computed using Matlab's ODE45) for
\begin{align}
 \delta = 0.6,\,\mu_s=1.1,\,\mu_d = 0.4,\label{eq:parahere}
\end{align}
which are the values used in \cite{bossolini2017b}, and for $\xi=\xi_{pd}\approx 0.8179 $, $\xi=0.7950$ and $\xi=\xi_t\approx 0.7835$, respectively. In these figures the singular vrai canards $\gamma_\pm$ are in green, the faux canards are in black, the sets $G_\pm (J_\pm)$ in blue and finally $G_\pm (\tilde \gamma_\pm)$ and $L_\pm (\tilde \gamma_\pm)$ are all in green and dashed.

Notice that $\gamma_+$ in green passes through $G_-(z_-)$ for $\xi=\xi_{pd}$ whereas it is tangent to $G_-(\tilde \gamma_-)$ for $\xi=\xi_t$, see \Cref{fig:xipdph} and \Cref{fig:xitph}. For $\xi=0.7950$ inbetween these values there are two transverse intersection points (indicated using green circles) of $\gamma_+$ and $G_-(\tilde \gamma_-)$, see \Cref{fig:xitpdph}. 
Additional numerical experiments (not shown) lead us to speculate that \ref{it:A5} holds for any $\mu_d<1$ $\mu_s>1$, but we have not find a way of proving this. (In particular, the only reason to restrict to $\mu_d\in [0.4,0.925]$ in \Cref{fig:thetaUps} was that it gave nicer plots.)

\begin{figure}[t!]
        \centering
        \begin{subfigure}{.49\textwidth}\caption{ }\label{fig:thetaUps}
  \centering
\includegraphics[scale=0.415]{./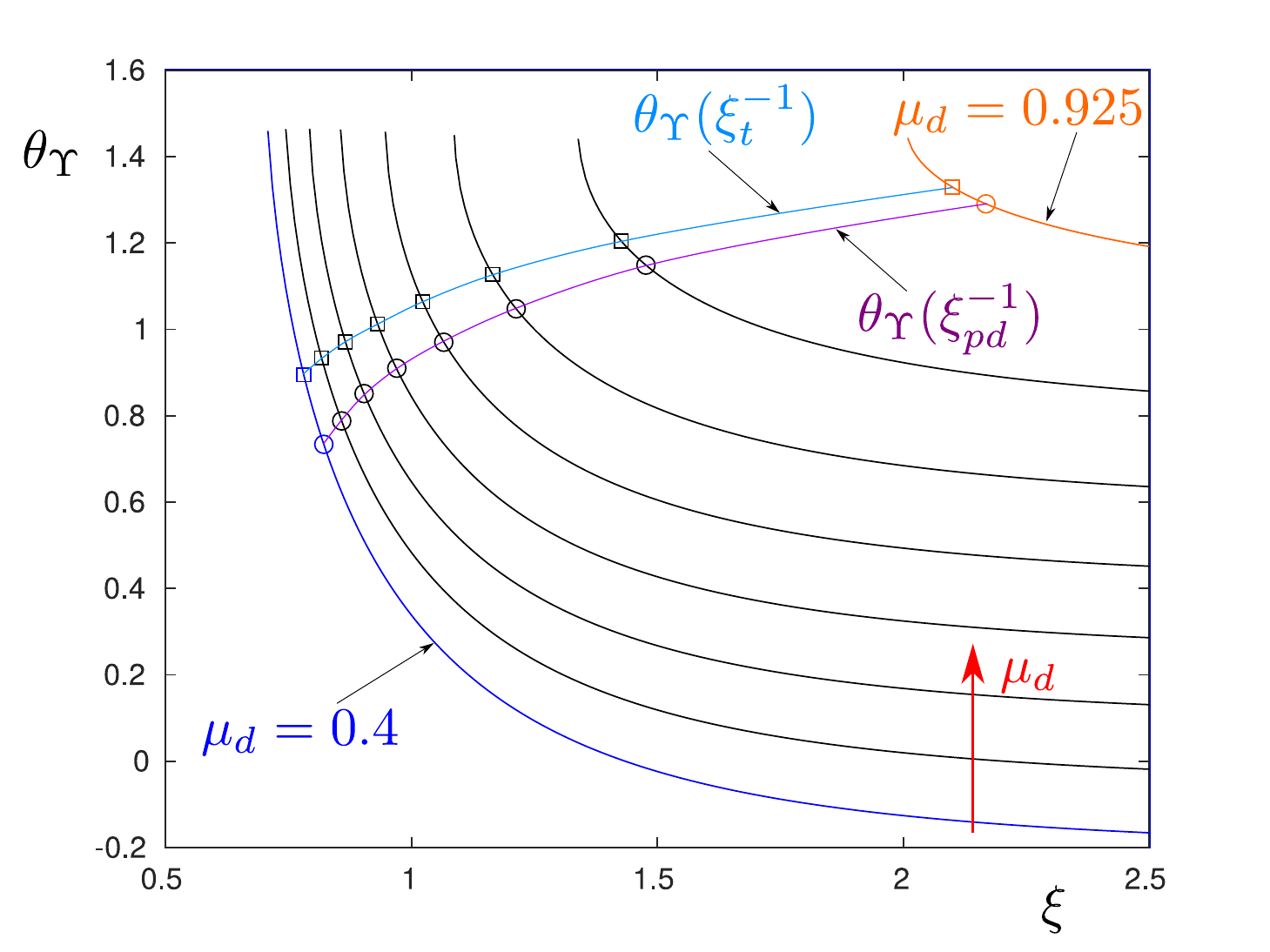}\end{subfigure}
\begin{subfigure}{.49\textwidth}\caption{ }\label{fig:xipdph}
  \centering
 \includegraphics[scale=0.415]{./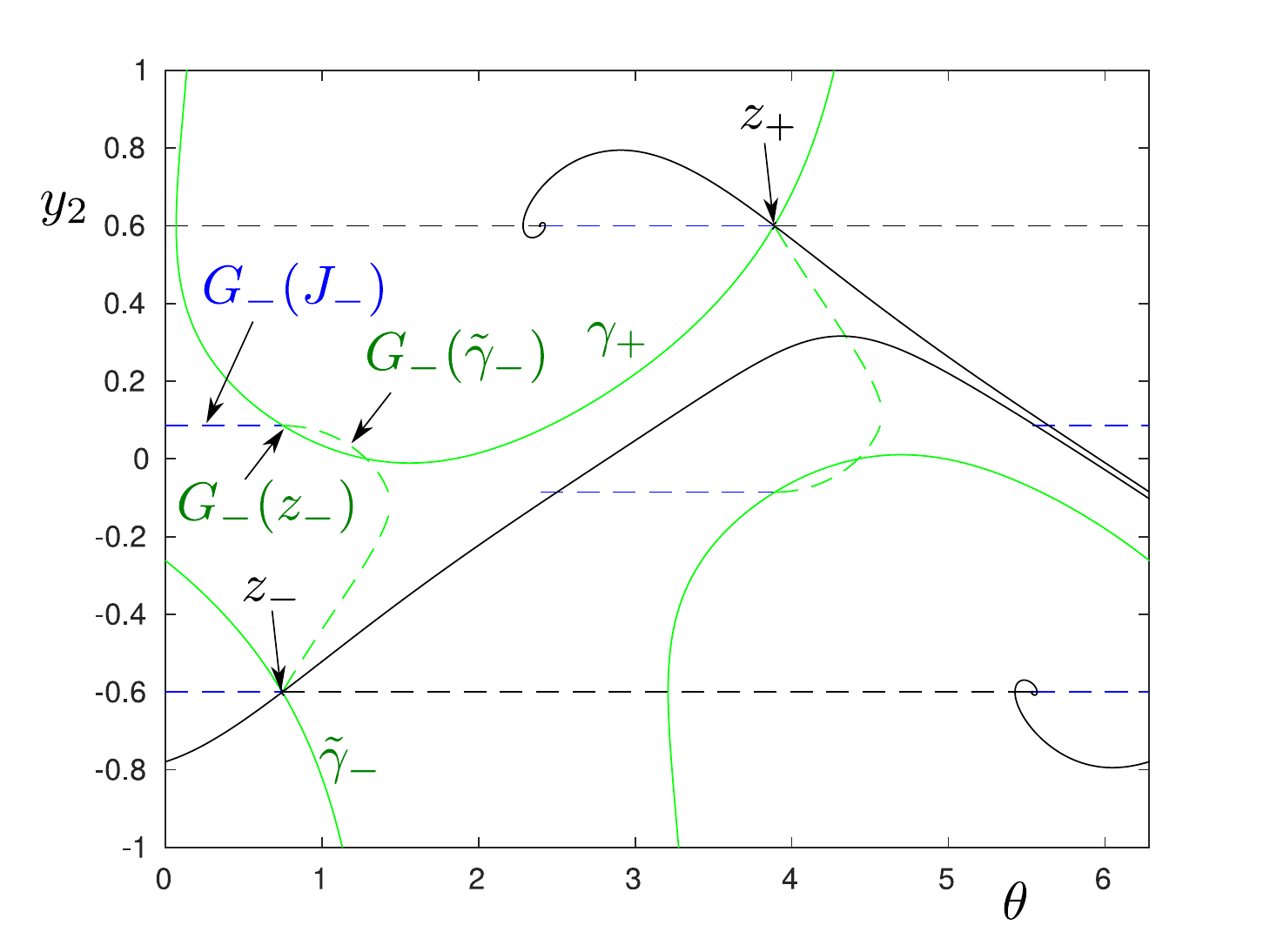}\end{subfigure}
 \begin{subfigure}{.49\textwidth}\caption{ }\label{fig:xitpdph}
  \centering
 \includegraphics[scale=0.415]{./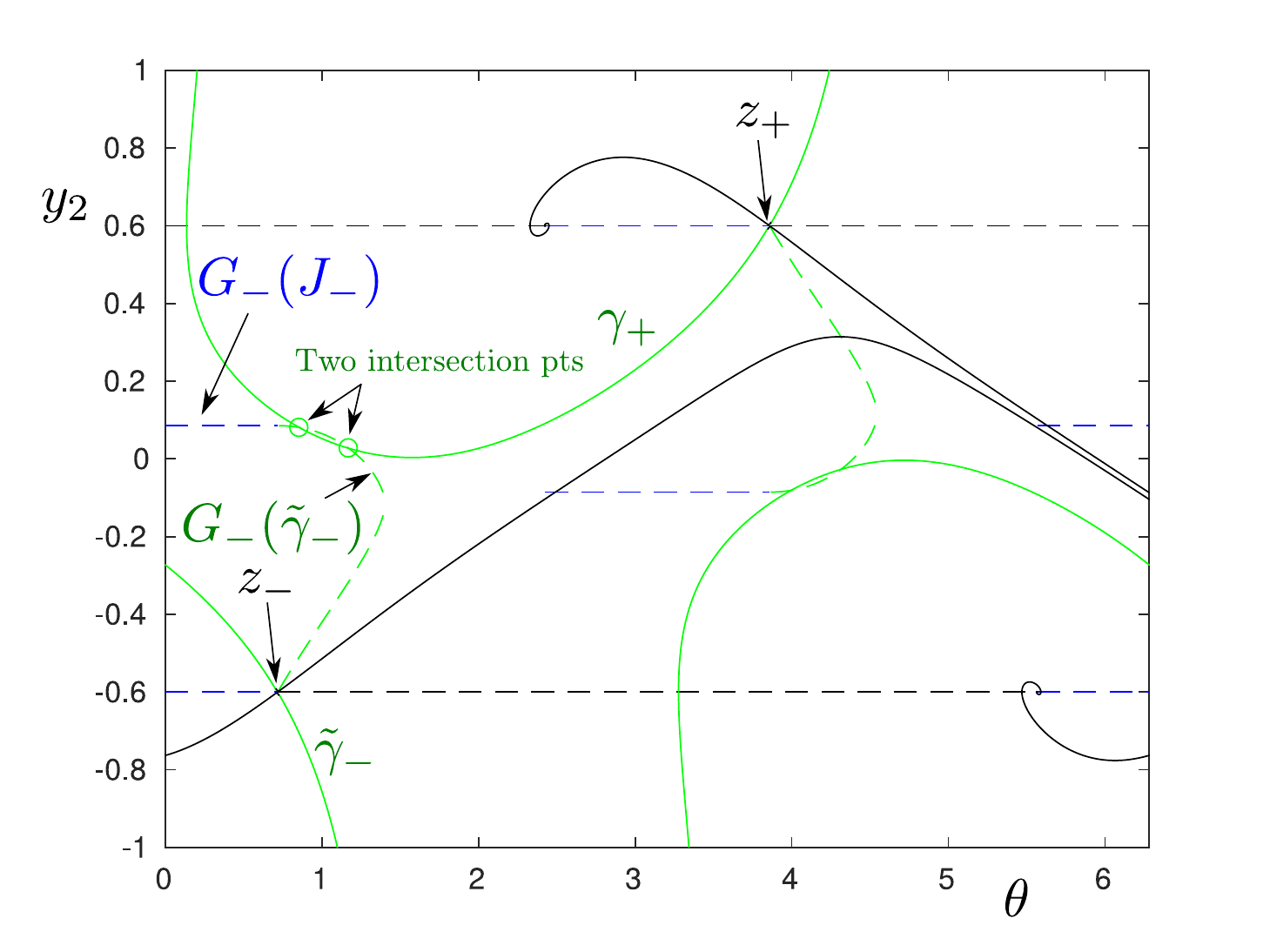}\end{subfigure}
 \begin{subfigure}{.49\textwidth}\caption{ }\label{fig:xitph}
  \centering
 \includegraphics[scale=0.415]{./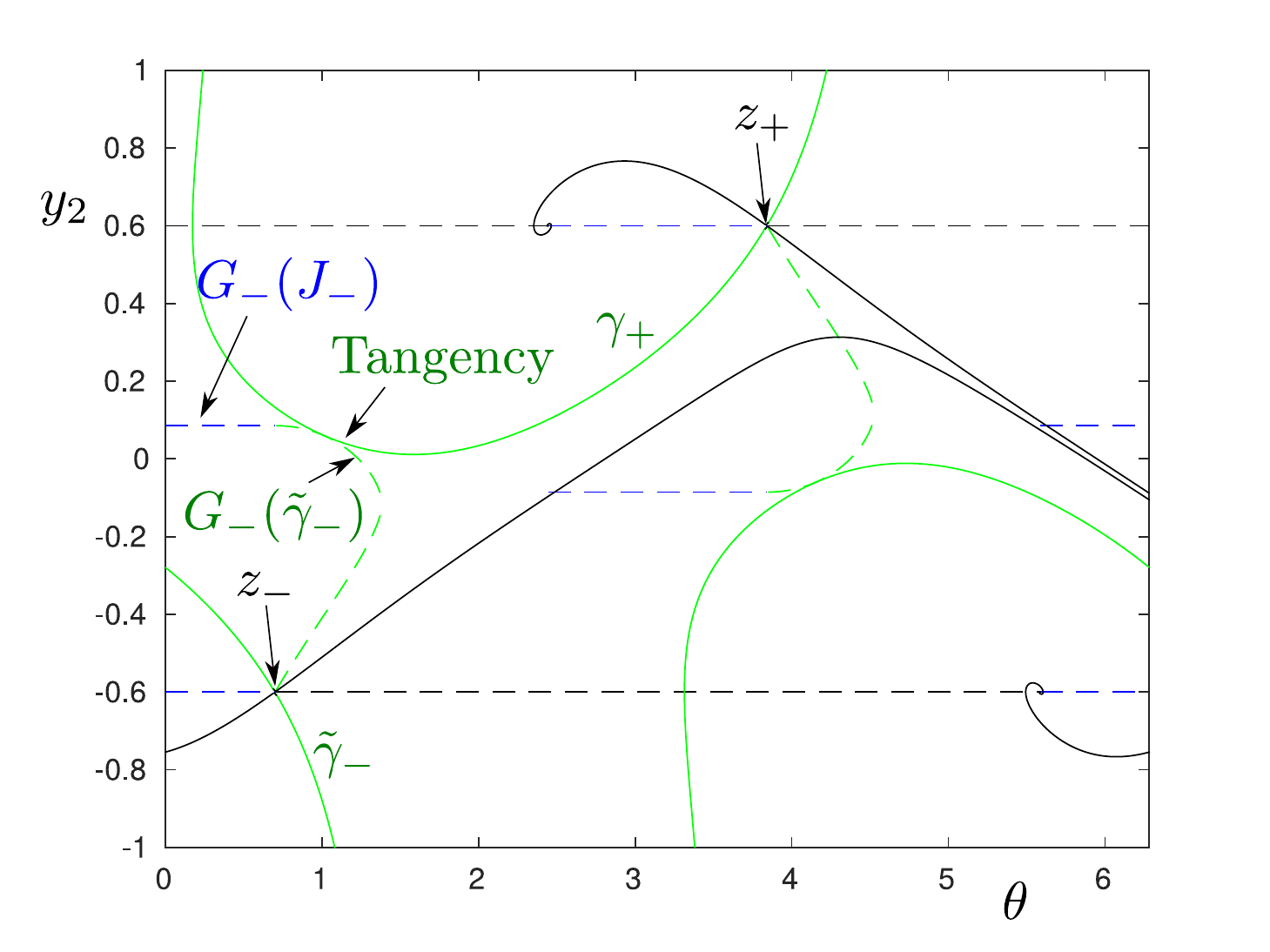}\end{subfigure}
  \caption{In (a) we show $\theta_\Upsilon$ for $\delta=0.6$, $\mu_s=1.1$ and $10$ different values $\mu_d$; the red arrow indicates the increasing direction of $\mu_d$. The values of $\mu_d$ are equidistributed within $\mu_d\in [0.4,0.925]$, with $\mu_d=0.4$ corresponding the values used in \cite{bossolini2017b}. The circles indicate the points with $\xi=\xi_{pd}$ whereas the squares indicate $\xi=\xi_t$.  These points trace out a curve in purple and cyan, respectively. (b), (c) and (d) are for $\mu_d=0.4$ and show the corresponding (computed) phase portraits for $\xi=\xi_{pd}\approx 0.8179 $, $\xi=0.7950$ and $\xi=\xi_t\approx 0.7835$, respectively.}\label{fig:Ups}
\end{figure}
\subsection{Perturbed dynamics}

By Fenichel's theory \cite{fenichel1974a,fenichel1979a,jones1995a,kuehn2015a}, any compact submanifold $S_a$ of $C_a$ perturbs to an attracting slow manifold $S_{a,\vare}$ (which we may take to be symmetric \cite{haragus2011a} such that $\mathbb S S_{a,\vare}=S_{a,\vare}$) having a stable manifold which is foliated by perturbed fibers, each smoothly $\mathcal O(\vare)$-close to the unperturbed ones of $C_a$. A similar result holds for $C_r^+$ and $C_r^-$, producing repelling slow manifolds $S_{r,\vare}^\pm$ for all $0<\vare\ll 1$. 
Moreover, the reduced flows on these manifolds are regular perturbations of the corresponding reduced flow on the critical manifolds. Finally, points on $S_{a,\vare}$ that reach a regular jump point in forward time, follows, see \cite{szmolyan2001a} and the previous analysis, the singular flow in such way that these points return to $S_{a,\vare}$ through base-points obtained as a small perturbation of the images under $G_-$. On the other hand, see \cite{szmolyan2001a}, the singular vrai canards $\gamma_-$, $\gamma_+=\mathbb S\gamma_-$ of the folded saddles perturb to perturbed canards $\gamma_{-,\vare}$ and $\gamma_{+,\vare}=\mathbb S\gamma_{-\vare}$, respectively, connecting $S_{a,\vare}$ with $S_{r,\vare}^\pm$. These orbits have an unstable foliation of fibers on the side of $S_{r,\vare}^\pm$, which following the perturbations of $G_\pm$ and $L_\pm$ produce stable foliations of base points on $S_{a,\vare}$. 



\subsection{A stroboscopic mapping}

Since each point reaches a neighborhood of $C$, we consider a section $\Pi$ at $\theta=\theta_*<\cos^{-1}(\delta \xi^{-1})$ in the scaling chart, using the $(x,y_2,\theta)$-coordinates, defined in a neighborhood of $S_a\cap \{\theta=\theta_*\}$ where $S_a\subset C_a$ is a compact submanifold. We then define the associated stroboscopic mapping $P_\vare:\,\{\theta=\theta_*\}\rightarrow \{\theta=2\pi+\theta_*\}$ for all $0<\vare\ll 1$ on this neighborhood (adjusting the domains appropriately if necessary) and write this mapping in terms of $(x,y_2)$. By the symmetry of the system, we have $P_\vare = R_\vare^2$ for $R_\vare=\mathbb S\circ Q_\vare$ where $Q_\vare$ is the ``half-map'' obtained from $\{\theta=\theta_*\}$ to $\{\theta=\pi+\theta_*\}$. 

We now describe the singular map $R_0$. Let $(x,y_2,\theta_*)\in \Pi$ be a point in the aforementioned neighborhood of $S_a$. Then project $(x,y_2,\theta_*)\mapsto (x,y_{2b},\theta_*)$ onto $S_a$ using the smooth fiber projections. Next, flow $(x,y_{2b},\theta_*)$ forward using the reduced flow until either $\theta=\pi+\theta_*$ (in which case $R_0(x,y_2)$ is obtained upon applying the symmetry $\mathbb S$ to the resulting end-point) or until we reach $J_-$. In the latter case, we apply $G_-$ to obtain a new point on $S_a$, which we again flow forward until $\theta=\pi+\theta_*$ or until we reach either $J_-$ or $J_+$. In the latter case, we apply either $G_-$ or $G_+$, respectively. This process concludes after finitely many steps (it is clear that there can be no accumulation points) and the image $R_0(x,y_{2})$ is then obtained upon applying the symmetry $\mathbb S$ to the resulting end-point. It is also clear that $R_0$ is piecewise smooth. The set of discontinuities of $R_0$ is closed and includes the set of points that reach $\gamma_-$ or $\gamma_+=\mathbb S\gamma_-$ under the process describing $R_0$. 
\begin{lemma} \label{lem:R0eps}
Consider any open set $\widetilde \Pi\subset \Pi$ not including the points of discontinuity of $R_0$. Then $R_\vare\vert_{\widetilde \Pi}\rightarrow R_0\vert_{\widetilde \Pi}$ in $C^l$ for any $l\in \mathbb N$ as $\vare\rightarrow 0$. 
\end{lemma}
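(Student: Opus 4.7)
The plan is to decompose $R_\vare$ into a finite composition of elementary maps, each of which is $C^l$-close to its singular counterpart on suitable compact sets by the standard GSPT tools already at play in this paper (Fenichel fibrations away from the fold lines, and the blowup analysis at regular jump points via the charts $\bar y=\pm 1$ and $\bar \vare=1$). First I would fix a compact set $K\subset \widetilde \Pi$. Since the singular process defining $R_0$ terminates after finitely many jumps at each point, and $K$ is uniformly bounded away from the discontinuity set of $R_0$, a compactness/continuity argument gives a uniform upper bound $N=N(K)$ on the number of jumps experienced by any orbit starting in $K$, together with a uniform positive distance from each intermediate segment to the folded saddles $z_\pm$ and to their vrai canards $\gamma_\pm$.

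Next I would factor
\[
R_\vare\vert_K \;=\; \mathbb S\circ \Phi_{N,\vare}\circ G_{i_N,\vare}\circ \Phi_{N-1,\vare}\circ\cdots\circ G_{i_1,\vare}\circ \Phi_{0,\vare}\circ \pi_\vare,
\]
where $\pi_\vare$ is the projection from $\Pi$ onto $S_{a,\vare}$ along Fenichel fibers, each $\Phi_{j,\vare}$ is the flow along $S_{a,\vare}$ from the previous base point to the next relevant section (either the target $\theta=\pi+\theta_*$ or a small fixed section transverse to $J_-$ or $J_+$), and each $G_{i_j,\vare}\in\{G_{-,\vare},G_{+,\vare}\}$ is the perturbed fold–passage map returning to a base point on $S_{a,\vare}$. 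Fenichel's theorem and the smoothness of the reduced flow away from $F_\pm$ give $C^l$-convergence $\pi_\vare\to\pi_0$ and $\Phi_{j,\vare}\to\Phi_{j,0}$ on the relevant compact sets. The blowup analysis of regular jump points (as summarised in \Cref{fig:charty1} and \Cref{fig:blowup2}, and in the sense of Szmolyan–Wechselberger) gives $C^l$-convergence $G_{\pm,\vare}\to G_\pm$ on compact subsets of $J_\pm$ uniformly bounded away from $z_\pm$.

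Finally, I would propagate these individual convergences through the finite composition. The $N$-step singular itinerary of $K$ consists of compact subsets of the allowed domains of the subsequent factors by the separation bound established in the first paragraph; by $C^0$-closeness the corresponding perturbed itinerary lies, for all $\vare$ small enough, in slightly enlarged compact subsets still contained in these admissible domains; and the chain rule then propagates $C^l$-closeness through $N$ compositions to give $R_\vare\to R_0$ in $C^l$ on $K$, hence locally uniformly in $C^l$ on $\widetilde\Pi$. The main (and essentially the only) obstacle is the $C^l$-control of the fold passage $G_{\pm,\vare}$, because the unstable direction is expanded across the transition; this is handled exactly because we stay uniformly away from $z_\pm$ and $\gamma_\pm$, so the passage is a \emph{regular} jump controlled by the blowup in the $\bar y=\pm 1$ chart together with recapture by the stable foliation of $S_{a,\vare}$ on the landing side, and no loss-of-derivative phenomenon of canard type intervenes.
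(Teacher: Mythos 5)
Your proposal is correct and is essentially the same argument the paper intends: the paper's proof is the one-line statement ``Follows from the analysis above, Fenichel's theory and \cite{szmolyan2001a,szmolyan2004a}'', and your decomposition of $R_\vare$ into a projection, slow flows on $S_{a,\vare}$, and finitely many regular fold-passage maps, with $C^l$-closeness propagated through a uniformly bounded composition on compacts avoiding the canards, is precisely the content of that citation. The only point worth emphasizing, which you correctly identify, is that the uniform bound $N(K)$ and the positive distance to $\gamma_\pm$ are what keep each fold passage regular and prevent the loss of derivatives near the folded saddle.
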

\begin{proof}
 Follows from the analysis above, Fenichel's theory and \cite{szmolyan2001a,szmolyan2004a}.
\end{proof}


\section{Existence of stick-slip and slip periodic orbits}\label{sec:pos}

In this section, we use our geometric approach, based upon GSPT and specifically blowup, to prove existence of various families of symmetric limit cycles of \cref{eq:model0} -- including stick-slip $\Gamma_{ss}$, canard $\Gamma_c$ and pure-stick $\Gamma_{ps}$ limit cycles, see illustration in the blown down $(x,y,\theta)$-space in \Cref{fig:lcs} -- as fixed-points of $R_\vare$. 
\begin{figure}[!t]
        \centering
        \includegraphics[width=0.65\textwidth]{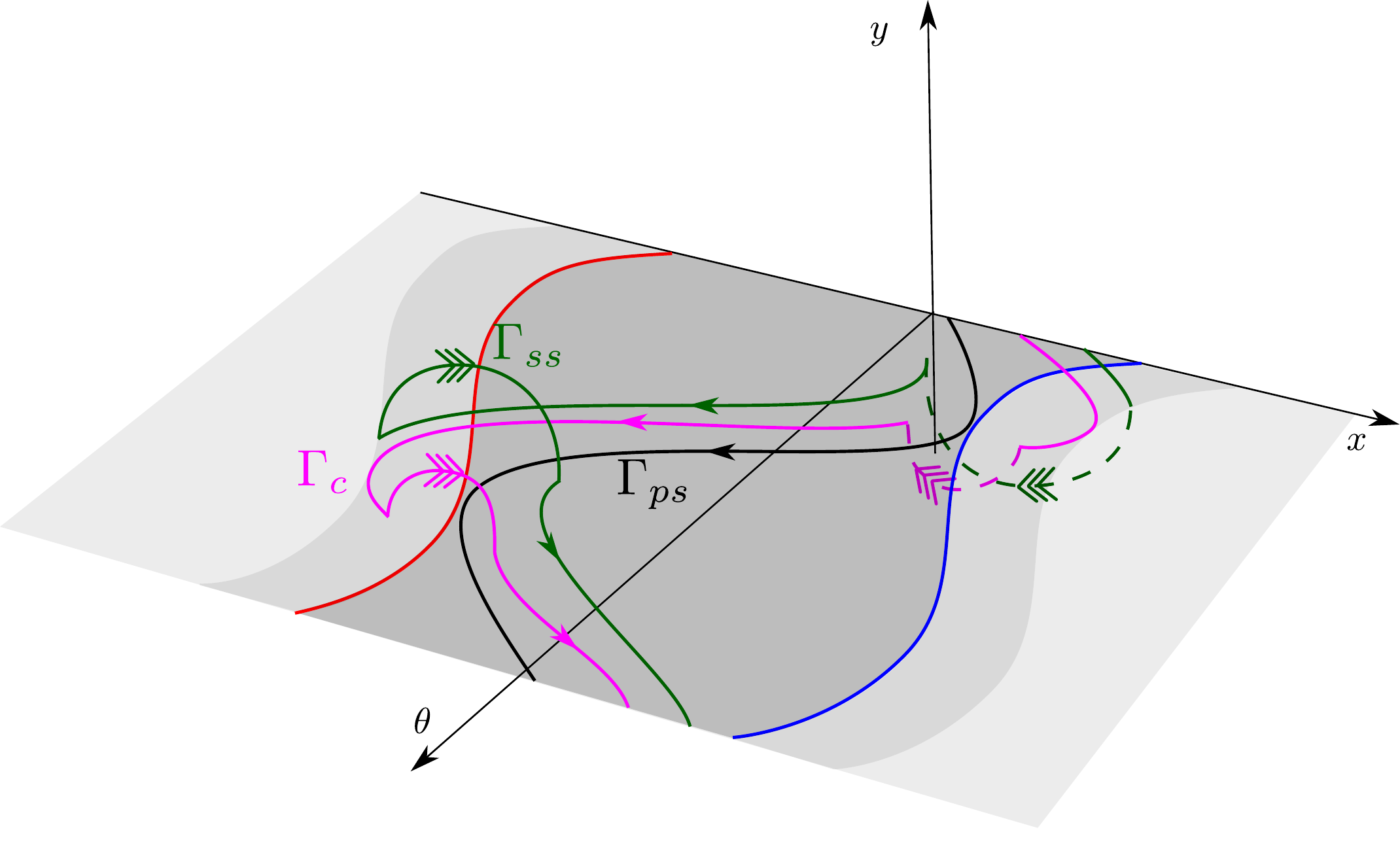}
        \caption{Examples of limit cycles in the blown down $(x,y,\theta)$-space (see also \Cref{fig:pws}) in the singular limit $\vare= 0$. The stick-slip orbit $\Gamma_{ss}$ is in green, the canard orbit $\Gamma_c$ is purple and finally the pure-stick orbit $\Gamma_{ps}$ is in black, see the rigorous statements on the existence of these limit cycles in \Cref{thm:main1new}. Notice that $\Gamma_{ss}$ and $\Gamma_{c}$ are (generalized) relaxation oscillations with slow pieces (the stick phase) on $y=0$ interspersed with fast jumps (the slip phase) within $y\gtrless 0$. For $\Gamma_{ss}$ the transition to slip from stick occurs at points corresponding to a regular jump points upon blowup (i.e. points on $J_\pm$), whereas for $\Gamma_{c}$ the onset of slip is delayed \cite{bossolini2020}. Notice that $\Gamma_c$ can also be pure-stick if at the singular level it follows $L_\pm$ rather than $G_\pm$. $\Gamma_{ps}$ in contrast is purely ``slow''; for $0<\vare\ll 1$ it is contained within an attracting slow manifold $S_{a,\vare}$.  }
        \label{fig:lcs}
\end{figure}
\begin{theorem}\label{thm:main1new}
Consider \cref{eq:model0} and suppose \ref{it:A1}-\ref{it:A5}. 
 Fix any compact intervals 
  $I_{ss}\subset (\xi_{pd},\xi_0]$, $I_{ssc}\subset (\xi_t,\xi_{pd})$, $I_{c}\subset (\xi_{t},\xi_0]$ and $I_{ps}\subset (0,\xi_0]$, $\xi_i$, $i=t,pd,0$ being described in \ref{it:A5}.  
 \begin{enumerate}
  \item Then for all $0<\vare\ll 1$ there exist four continuous families of hyperbolic symmetric limit cycles:
 \begin{align*}
  I_{ss}\ni \xi &\mapsto \Gamma_{ss}(\xi,\vare),\\
  I_{ssc} \ni \xi &\mapsto \Gamma_{ssc}(\xi,\vare),\\
  I_c \ni \xi &\mapsto \Gamma_{c}(\xi,\vare),\\
  I_{ps}\ni \xi &\mapsto \Gamma_{ps}(\xi,\vare).
 \end{align*}
 (Here subscripts indicate the following: $ss$=stick-slip, $ssc$=stick-slip with canards, $c$=canard limit cycles which can be either stick-slip or pure-stick depending on whether it follows $G_\pm$ or $L_\pm$, respectively, $ps$=pure-stick). 
\item For each $\xi \in I_{ss},I_{ssc}, I_c$, $\lim_{\vare\rightarrow 0} \Gamma_{ss}(\xi,\vare),\lim_{\vare\rightarrow 0} \Gamma_{ssc}(\xi,\vare),\lim_{\vare\rightarrow 0} \Gamma_{c}(\xi,\vare),\lim_{\vare\rightarrow 0} \Gamma_{pc}(\xi,\vare)$ are PWS cycles (see \Cref{fig:lcs}); in particular, $\lim_{\vare\rightarrow 0} \Gamma_{c}(\xi,\vare)$ as well as $\lim_{\vare\rightarrow 0} \Gamma_{ssc}(\xi,\vare)$ contain canard segments for all $\xi\in I_{c}, I_{ssc}$, respectively, whereas $\lim_{\vare\rightarrow 0} \Gamma_{ss}(\xi,\vare)$ is of stick-slip type intersecting the set of jump points $J_\pm$ each once within one period. Finally, $\lim_{\vare\rightarrow 0} \Gamma_{ps}(\xi,\vare)$ is pure-stick for all $\xi\in I_{ps}$. 
\item $\Gamma_{ps}(\xi,\vare)$ and $\Gamma_{ss}(\xi,\vare)$ are hyperbolically attracting for any $\xi\in I_{ps}, I_{ss}$, respectively.
\item Finally, fix any $\xi\in(0,\xi_t)$ and any large compact set $B$ in the phase space. Then for all $0<\vare\ll 1$ there are no stick-slip orbits and $\Gamma_{ps}(\xi,\vare)$ attracts each point in $B$. 
\end{enumerate}
\end{theorem}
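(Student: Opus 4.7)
Our strategy is to realize each of the four families as a hyperbolic fixed point of the stroboscopic half-map $R_\vare$ from \Cref{lem:R0eps}, using the $C^l$-convergence $R_\vare\to R_0$ away from its (closed) discontinuity set together with the implicit function theorem. For each cycle type the plan is to (i) locate the corresponding singular fixed point of $R_0$ by tracing the reduced flow on $C_a$ together with the jump maps $G_\pm,L_\pm$ and the canards $\gamma_\pm$; (ii) verify nondegeneracy/hyperbolicity of the linearization $DR_0$ at that fixed point; and (iii) invoke the implicit function theorem to obtain a family of hyperbolic fixed points of $R_\vare$ depending continuously on $\xi\in I_\bullet$ and on $0<\vare\ll 1$. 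Attractivity (item 3) follows by checking that the spectrum of $DR_0$ lies strictly inside the unit disk.

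For $\Gamma_{ps}$ I would take the unique $\mathbb S$-symmetric attracting periodic orbit of the reduced flow \cref{eq:reducedyt} on $C_a$, which is obtained as a fixed point of the Poincar\'e map for the scalar nonautonomous ODE $\mu_d\phi'(y_2)\,dy_2/d\theta=\xi^{-1}(-y_2-\xi\cos\theta)$; for $\xi\in I_{ps}$ a one-dimensional comparison argument shows that this fixed point exists, is unique, stays strictly inside $C_a$, and attracts exponentially. The corresponding fixed point of $R_0$ is then hyperbolically stable thanks to strong Fenichel fiber contraction onto $S_{a,\vare}$ combined with this 1D stability. For $\Gamma_{ss}$ with $\xi\in I_{ss}$, the singular symmetric cycle follows $C_a$ into a regular jump point on $J_-$, is mapped by $G_-$ back onto $C_a$, and closes up after one application of $\mathbb S$; the hypothesis $\xi>\xi_{pd}$ from \ref{it:A5} guarantees transverse entry into $J_-$ away from $\gamma_-$, and attractivity follows from the strong contraction of $G_-$ (whose image on $F_-$ is the single point $y_2=y_{2-}$) together with Fenichel contraction before and after the jump.

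The canard families $\Gamma_c$ and $\Gamma_{ssc}$ arise from the geometry of $\gamma_+$ against $G_-(\tilde\gamma_-)$ and $L_-(\tilde\gamma_-)$ encoded in \ref{it:A5}. For $\xi\in I_c\subset(\xi_t,\xi_0]$, the singular symmetric cycle carries a canard segment along $\gamma_+$ into $C_r^+$ and is returned to $C_a$ either by $G_+$ (stick--slip) or by $L_+$ (pure-stick, since $L_+$ does not cross $\Sigma$ in the blown-down picture); transversality of $\gamma_+$ with $\Upsilon$ from \ref{it:A5}(a) supplies a hyperbolic fixed point of $R_0$, and persistence to $R_\vare$ is obtained via the Szmolyan--Wechselberger theory for the folded saddle $z_-$, which perturbs the singular canard $\gamma_-$ (and its symmetric twin $\gamma_+$) to maximal canards $\gamma_{\pm,\vare}$ with well-controlled stable/unstable foliations. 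For $\xi\in I_{ssc}\subset(\xi_t,\xi_{pd})$, the two transverse intersection points of $\gamma_+$ with $G_-(\tilde\gamma_-)$ in \Cref{fig:xitpdph} give two distinct hyperbolic fixed points of $R_0$, which perturb to the two families $\Gamma_c,\Gamma_{ssc}$ by the same argument.

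For item 4, fix $\xi\in(0,\xi_t)$. Then \ref{it:A5}(c) forces $\gamma_+\cap G_-(\tilde\gamma_-)=\emptyset$, so every singular orbit entering $J_-$ is returned by $G_-$ to $C_a$ and cannot remain near $C_r^\pm$; combined with exponential attraction to the pure-stick fixed point within $C_a$, this shows that $R_0^n$ drives every point of $\Pi$ (outside the discontinuity set) into an arbitrarily small neighborhood of $\Gamma_{ps}(\xi,0)$ in finitely many steps. To transfer this to $R_\vare$ on the compact set $B$, I would cover $B$ by finitely many regions where $R_\vare$ is $C^l$-close to $R_0$ via \Cref{lem:R0eps}, combine with Fenichel contraction to absorb trajectories into $S_{a,\vare}$ after each jump, and use the exponentially-small canard-delay bound from \cite{szmolyan2001a} to rule out orbits lingering near the (nonintersecting) perturbed canards $\gamma_{\pm,\vare}$. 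The \textbf{main obstacle} throughout is precisely this uniform control in a neighborhood of the canards: the discontinuity set of $R_0$ accumulates there, and one must verify that it does not generate spurious stick-slip fixed points of $R_\vare$ -- a step that relies essentially on the Szmolyan--Wechselberger analysis.
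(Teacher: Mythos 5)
Your overall strategy—realize each family as a fixed point of $R_\vare$, locate a singular fixed point of $R_0$, verify its nondegeneracy, and perturb via Lemma~2.3 and the implicit function theorem—is the same as the paper's for $\Gamma_{ss}$ and $\Gamma_{ps}$, and for $\Gamma_c$ and $\Gamma_{ssc}$ your appeal to the Szmolyan--Wechselberger folded-saddle theory matches the paper's appeal to the GSPT construction in \cite[Prop.~6.6]{bossolini2020}. But there is a genuine gap in your attractivity argument for $\Gamma_{ss}$. You attribute the contraction of $R_0$ along $C_a$ to the ``strong contraction of $G_-$'' together with Fenichel contraction. Neither gives what you need. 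Fenichel contraction acts only transverse to the slow manifold, not in the $y_2$-direction inside $C_a$. And the ``contraction'' of $G_-$—the fact that $G_-$ squeezes all of $F_-$ onto the single $y_2$-level $y_{2-}$—is irrelevant at a regular jump point: by the time the orbit reaches $J_-\subset F_-$, the initial $y_2$-variation on the section $\Pi$ has been entirely converted into a $\theta$-variation along $F_-$, and $G_-$ is the identity in $\theta$. So $G_-$ is neutral for the return map, not contracting. The actual source of contraction is the negative divergence of the desingularized reduced flow \cref{eq:reddes}: the paper identifies $J_\pm$ with $G_\pm(J_\pm)$ through the (translation-like) maps $G_\pm$, making the reduced dynamics piecewise-smooth and continuous across the jump, and then reads off $R_0'(y_2)\in(-1,0)$ from the Liouville formula. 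Your argument as written does not establish this, so your attractivity claim for $\Gamma_{ss}$ (and the contraction-mapping step implicit in its existence) is not justified.

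A secondary issue: your stated plan is to verify nondegeneracy/hyperbolicity of $DR_0$ at each singular fixed point. For $\Gamma_c$ and $\Gamma_{ssc}$, the fixed point sits on the discontinuity set of $R_0$ (it passes through the canard), so $DR_0$ is not defined there. You flag this yourself at the end as the ``main obstacle,'' but you leave it open within your framework. The paper sidesteps it entirely: for the canard families it does not analyze $R_0$ at all, but constructs the singular cycle from $\gamma_-$, a fast jump determined by the transverse intersection of $\gamma_+$ with $G_-(\tilde\gamma_-)\cup L_-(\tilde\gamma_-)$, and the $\mathbb S$-symmetric pieces, and then invokes the transverse-slow-manifold argument of \cite[Prop.~6.6]{bossolini2020} to obtain the perturbed cycle and its hyperbolicity. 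So for those two families you would need to abandon the $DR_0$ route and give the GSPT transversality argument directly, as the paper does. For $\Gamma_{ps}$ your scalar comparison argument and for item~4 your covering/finite-iteration argument are fine and, if anything, more detailed than the paper's terse treatment.
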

\begin{proof}
 The existence of the family $\Gamma_c(\xi,\vare)$ was proven in \cite[Proposition 6.6]{bossolini2020}. Indeed, suppose first that $\xi>\xi_{pd}$ such that $\gamma_+$ by assumption \ref{it:A5} intersects (the closure of) $L_-(\tilde \gamma_-)\cup G_-(\tilde \gamma_-)$ once. Then there is a singular cycle of the blown up system with desirable hyperbolicity properties, consisting of a segment of $\gamma_-$, a fast jump on the repelling side at a point determined by the aforementioned intersection point, and then a symmetric segment of $\gamma_+$ and symmetric jump. Using the geometric construction based upon GSPT in \cite[Proposition 6.6]{bossolini2020} we then obtain the canard limit cycle $\Gamma_c(\xi,\vare)$ for any $0<\vare\ll 1$. We can continue such a limit cycle continuously in $\xi$ for any $\xi>\xi_t$ since by assumption \ref{it:A5} the transverse intersection of $\gamma_+$ and $L_-(\tilde \gamma_-)\cup G_-(\tilde \gamma_-)$ persists up until $\xi=\xi_t$. $\Gamma_{ssc}(\xi,\vare)$ is handled similarly from the intersection of  $\gamma_+$ with $G_-(\tilde \gamma_-)$ appearing after $\xi=\xi_{pd}$.

 We therefore proceed to prove the existence of the families $\Gamma_{ss}(\xi,\vare)$ and $\Gamma_{ps}(\xi,\vare)$ and the properties described in the theorem. For this we use \Cref{lem:R0eps} and proceed to analyze the mapping $R_0$.
It is without loss of generality to restrict $R_0$ to $C_a$ and in this way $R_0$ just becomes a mapping of $y_2$. We shall adopt this convention henceforth and therefore write $R_0$ as a mapping on $y_2$: $y_2\mapsto y_{2+}=R_0(y_2)$. 
It is obvious that for a $y_2\in \tilde \Pi$ (i.e. the set where $R_0$ is smooth), then $R_0$ is decreasing and it is contracting there, i.e. $R_0'(y_2)\in (-1,0)$. To see the latter, notice that $R_0$ is obtained from motion along $C_a$, jumps at either $J_-$ or $J_+$ and a reflection $y_2\mapsto -y_2$ due to $\mathbb S$. The maps $G_\pm$ act like ``translations'' and does not affect the stability. In particular, we may locally identify points on $J_\pm$ with those on $G_\pm (J_\pm)$ through the mappings $G_\pm$. Then the reduced dynamics becomes continuous at $J_\pm$ and piecewise smooth. The contracting properties of $R_0$ then follows from that the divergence of \cref{eq:reddes} is $-1$. 

We first focus attention on the existence of $\Gamma_{ss}(\xi,\vare)$ for $\xi>\xi_{pd}$. For this we define the section $\Pi$ by setting 
\begin{align}
\theta_*=\theta_{\Upsilon}(\xi^{-1}).\label{eq:thetaStar}
\end{align}
Let $y_{2,c}$ denote the $y_2$-value of the first intersection of $\gamma_-\cap \Pi$ obtained by flowing the local stable manifold backwards on $C_a$. It is a point of discontinuity of $R_0$, where by assumption  
\begin{align*}
 \lim_{y_2\rightarrow y_{2,c}^-} R_0(y_{2})<y_{2,c},\quad \lim_{y_2\rightarrow y_{2,c}^+} R_0(y_{2})>y_{2,c}.
\end{align*}
By \cref{eq:thetaStar}, we have $\lim_{y_2\rightarrow -\delta^+}R_0(y_2)=y_{2,c}$, see also \Cref{fig:reduceda}. Let $K=(-\delta,y_{2,c})$. Then since $R_0$ contracts it follows that at the singular level, the forward flow of any $(y_2,\theta_*)$ with $y_2\in K$ first follows the slow flow on $C_a$, jumps at $J_-$ to a point on $G_-(J_-)$, and then finally follows the slow on $C_a$ again up until $\theta=\theta_*+\pi$. Therefore $R_0(K)\subset (-\delta,y_{2,c})$ and 
\begin{align*}
-\delta< \lim_{y_2\rightarrow y_{2,c}^-} R_0(y_2)<y_{2,c},
\end{align*}
There is therefore a unique fixed point of $R_0$ inside $K$ by the contraction mapping theorem. We may also see this more directly by the intermediate value theorem: The continuous function $V$ on $K$ defined by $V(y_2):=R_0(y_2)-y_2$ satisfies
\begin{align*}
 \lim_{y_2 \rightarrow -\delta^+} V(y_2)>0>\lim_{y_2 \rightarrow y_{2,c}^-} V(y_2).
\end{align*}
Using \Cref{lem:R0eps} we perturb the fixed-point of $R_0$ into an attracting fixed-point of $R_\vare$ for all $0<\vare\ll 1$. It corresponds to a stick-slip periodic orbit $\Gamma_{ss}(\xi,\vare)$. Seeing that the limit cycle is obtained from an implicit function theorem argument, $\Gamma_{ss}$ depends smoothly on $\xi\in I_{ss}$. 


The family $\Gamma_{ps}(\xi,\vare)$ is obtained by the same approach, except that we now fix the section $\Pi$ using $\theta_*=\theta_{-}(\xi^{-1})$. Let $K=(-\delta,b)$ with $b$ defined as follows: If the backward flow of the local stable manifold $\gamma_+$ intersects $\Pi$ before it intersects $\{y_2=\delta\}$ then $b$ is defined as the $y_2$-coordinate of the intersection $\Pi\cap \gamma_+$. Otherwise, $b=\delta$. In any case, by construction of the section $\Pi$ and by the symmetry $\mathbb S$ no points in $K$ reach $J_\pm$ and we have that $R_0(K)\subset (y_{2,c},b)$, that $R_0$ is monotonically decreasing and that it is contracting on $K$. We therefore obtain a unique and attracting fixed-point by the contraction mapping theorem, which we by \Cref{lem:R0eps} perturb into an attracting limit cycle $\Gamma_{ps}(\xi,\vare)$ for all $0<\vare\ll 1$ by the implicit function theorem. $\Gamma_{ps}(\xi,\vare)$ is pure-stick since it belongs to a slow manifold $S_{a,\vare}$. Clearly, this argument also holds for $0<\xi\le \delta$ in which case $J_\pm$ disappear and $C_a$ is forward invariant and hence the $\omega$-limit set is $\Gamma_{ps}$ for any fixed $\xi \in (0,\xi_t)$ and all $0<\vare\ll 1$. 
\end{proof}

\begin{figure}[!t]
        \centering
        \includegraphics[width=0.75\textwidth]{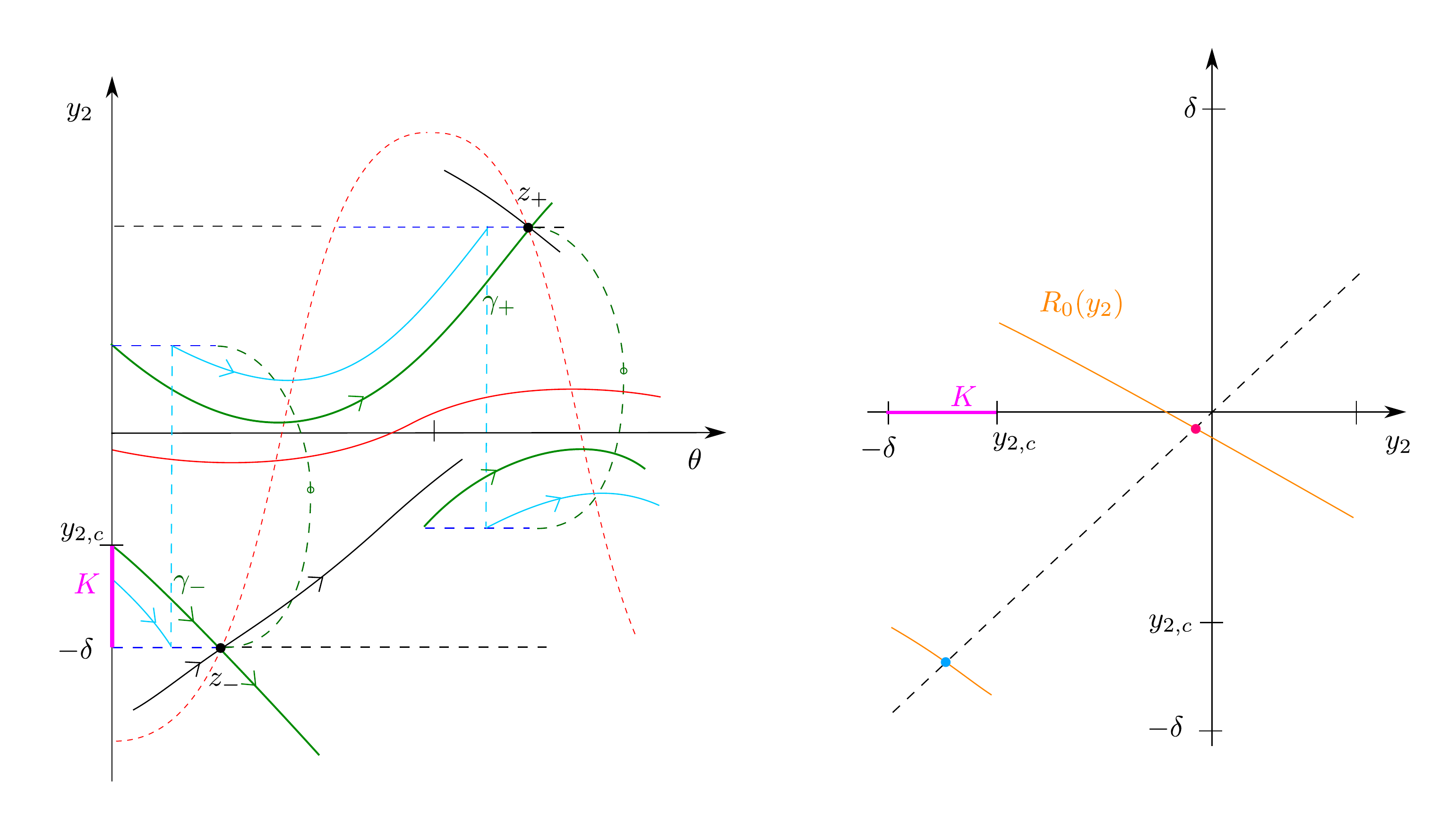}
        \caption{Construction of limit cycles through the mapping $R_0$. }
        \label{fig:reduceda}
\end{figure}

We expect that the families $\Gamma_{ss}(\xi,\vare)$, $\Gamma_{ssc}(\xi,\vare)$ and $\Gamma_{c}(\xi,\vare)$ are connected in a one-parameter family of limit cycles, that contains each of the cycles $\Gamma_{ss}, \Gamma_{ssc}$ and $\Gamma_c$ within their domain of existence provided by \Cref{thm:main1new}, having a fold bifurcation at $\xi=\xi_t+o(1)$ and a period-doubling bifurcation for $\xi=\xi_{pd}+o(1)$. See \Cref{fig:bifGamma} for an illustration. But we have not pursued a rigorous statement of this kind. 
However, since there can be no stick-slip limit cycles for $\xi<\xi_t$ it follows indirectly that there has to be some sort of fold bifurcation but it is not obvious whether the branches really are connected and whether there could be additional folds. Certainly, the fold bifurcation cannot be a saddle-node bifurcation in any meaningful sense of the word due to the chaotic dynamics we prove in \Cref{thm:main2} in the following section. 
Nevertheless, we can relatively easy explain why we expect to find a period doubling bifurcation:
When $\gamma_+$ intersects $G_-(J_-)$ for $\xi>\xi_{pd}$, the stick-slip limit cycle $\Gamma_{ss}(\xi,\vare)$ is attracting cf. \Cref{thm:main1new}, but just on the other side of $\xi<\xi_{pd}$, the stick-slip limit $\Gamma_{ssc}(\xi,\vare)$ is of saddle-type. In fact, in the latter case, we almost immediately see from the singular structure that $\mathcal O(e^{-c/\vare})$-displacements from the corresponding fixed point of $R_\vare$ along the negative $y_2$-direction tangent to $S_{a,\vare}$, leave a vicinity of $C_r^-$ before the corresponding jump point of the limit cycle. Upon following $G_-$ these displacements then evolve ``below'' the corresponding limit cycle and therefore produce $\mathcal O(1)$-displacements at $\theta=\pi+\theta_*$ in the negative $y_2$-direction. Upon applying the symmetry $\mathbb S$, we see that these displacements then lead to very negative eigenvalues of the linearization of $R_\vare$. Although, we have not pursued this in details, the transition from attracting to very repelling is expected to be monotone. In combination this then explains the observed existence of a (locally unique) period doubling bifurcation near the the $\xi$-value $\xi_{pd}$ where $\gamma_+$ intersects the $G_-$ image of the folded saddle $z_-$. We demonstrate that these considerations are in agreement with our numerical computations in the following section.

\begin{figure}[!t]
        \centering
        \includegraphics[width=0.65\textwidth]{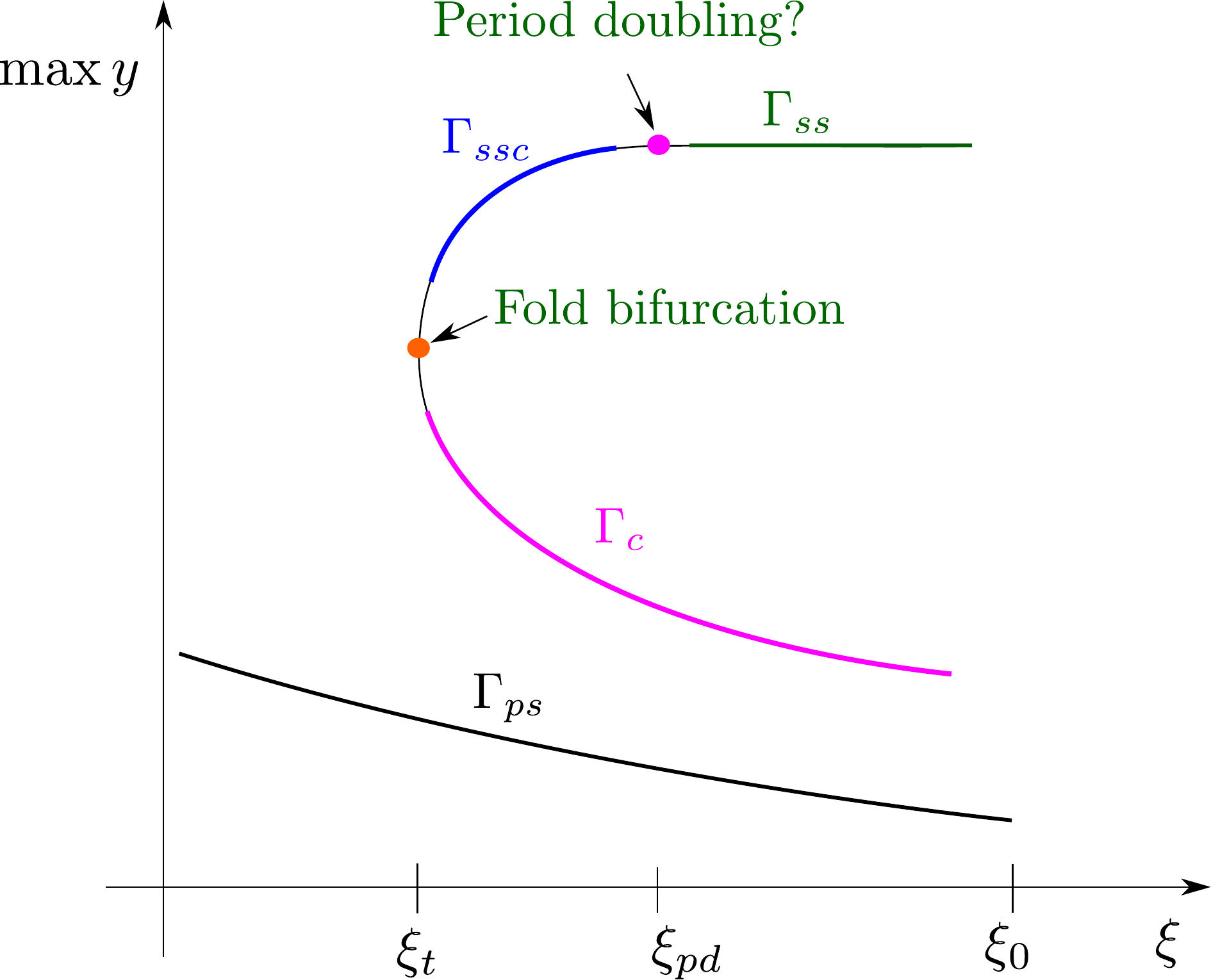}
        \caption{Illustration of the different families of limit cycles obtained by \Cref{thm:main1new} using $\xi$ as our bifurcation parameter and representing each limit cycle by $\max y$. We expect the limit cycles $\Gamma_{ss}$, $\Gamma_{ssc}$ and $\Gamma_{c}$ belong to one family of limit cycles that undergo a period doubling bifurcation at $\xi=\xi_{pd}$ and a fold bifurcation at $\xi=\xi_t$ in the limit $\vare\rightarrow 0$. }
        \label{fig:bifGamma}
\end{figure}

\subsection{Bifurcation of limit cycles}
In \Cref{fig:coexistence}, we illustrate the results of numerical computations (using the bifurcation software system AUTO \cite{Doedel2006} as well as Matlab) of \cref{eq:model0} for the regularization function \cref{eq:phie} with the parameters \cref{eq:parahere} also used in \cite{bossolini2017b,bossolini2020}.
More specifically, \Cref{fig:coexistence_bif} shows a bifurcation diagram of limit cycles for $\vare=0.01$ using $\xi$ as a bifurcation parameter. \Cref{fig:coexistence_cAndss_ytheta} shows the corresponding symmetric periodic orbits in the $(\theta,y_2)$-plane in red and magenta (the corresponding points are also indicated in \Cref{fig:coexistence_bif}) for $\xi=0.9397$. In \Cref{fig:coexistence_cAndss_ytheta} we also show a pure-stick orbit in black along with the relevant ``singular objects'' described above: the singular vrai canards $\gamma_\pm$ in green, the faux canard in black, the sets $G_\pm (J_\pm)$ as well as $G_\pm (\tilde \gamma_\pm)$ and $L_\pm (\tilde \gamma_\pm)$ (green and dashed). The existence of the three limit cycles in magenta (stick-slip, i.e. belonging to the branch $\Gamma_{ss}(\xi,\vare)$), black (pure-stick, i.e. $\Gamma_{ps}(\xi,\vare)$) and red (canard type, i.e. $\Gamma_{c}(\xi,\vare)$) follow from \Cref{thm:main1new}. 
%
%
The computations done in \cite{bossolini2020}, for a slightly different regularization function and a separate scaling, show that the lower branch (i.e. $\Gamma_{c}(\xi,\vare)$) consisting of canard orbits like the one in red in \Cref{fig:coexistence_cAndss_ytheta} is connected to pure-slip periodic orbits. These computations also  demonstrated a fold bifurcation of limit cycles, which we for our regularization function indicate as the cyan square in \Cref{fig:coexistence_bif}. In \Cref{fig:coexistence_csn_ytheta} we show the corresponding nonhyperbolic periodic orbit for the parameter value $\xi=0.7805$ along with the coexisting pure-stick orbit, again in black. Notice that this value of $\xi$ at the fold is close to the value of $\xi_t\approx 0.7835$ found above, recall \Cref{fig:Ups}. We also see this in \Cref{fig:coexistence_csn_ytheta} where the degenerate periodic orbit follows $\gamma_+$ and is almost tangent to $G_-(\tilde \gamma_-)$. 

Next, we emphasize that, as we follow the magenta limit cycle for smaller values of $\xi$ towards the fold bifurcation, a period-doubling bifurcation occurs around $\xi=0.8731$. This bifurcation produces two branches indicated in \Cref{fig:coexistence_bif}. On these branches we find different periodic orbits that are related by the symmetry. Two are illustrated in \Cref{fig:coexistence_cAndss_ytheta} in blue and cyan, the corresponding points in the bifurcation diagram are also indicated in \Cref{fig:coexistence_bif}. We see that they have (small) canard segments. We demonstrate that these considerations are in agreement with our numerical computations in \Cref{fig:coexistence_bifnew} for a smaller value of $\varepsilon$. See figure caption for further explanation. Notice in particular, that for this value of $\varepsilon=0.005$ we find that the period doubling bifurcation occurs at $\xi=0.8096$, which is relatively close to the computed value of $\xi_{pd}=0.8179$, recall \Cref{fig:Ups}. At the same time, the fold bifurcation now occurs at $\xi=0.7820$, which is also closer to the expected value to $\xi_{t}= 0.7835$. We did not manage to compute the branches from the period doubling bifurcation as in \Cref{fig:coexistence_bif} for this smaller values of $\vare>0$.

\begin{figure}[t!]
        \centering
        \begin{subfigure}{.45\textwidth}\caption{ }\label{fig:coexistence_cAndss_ytheta}
  \centering
\includegraphics[scale=0.415]{./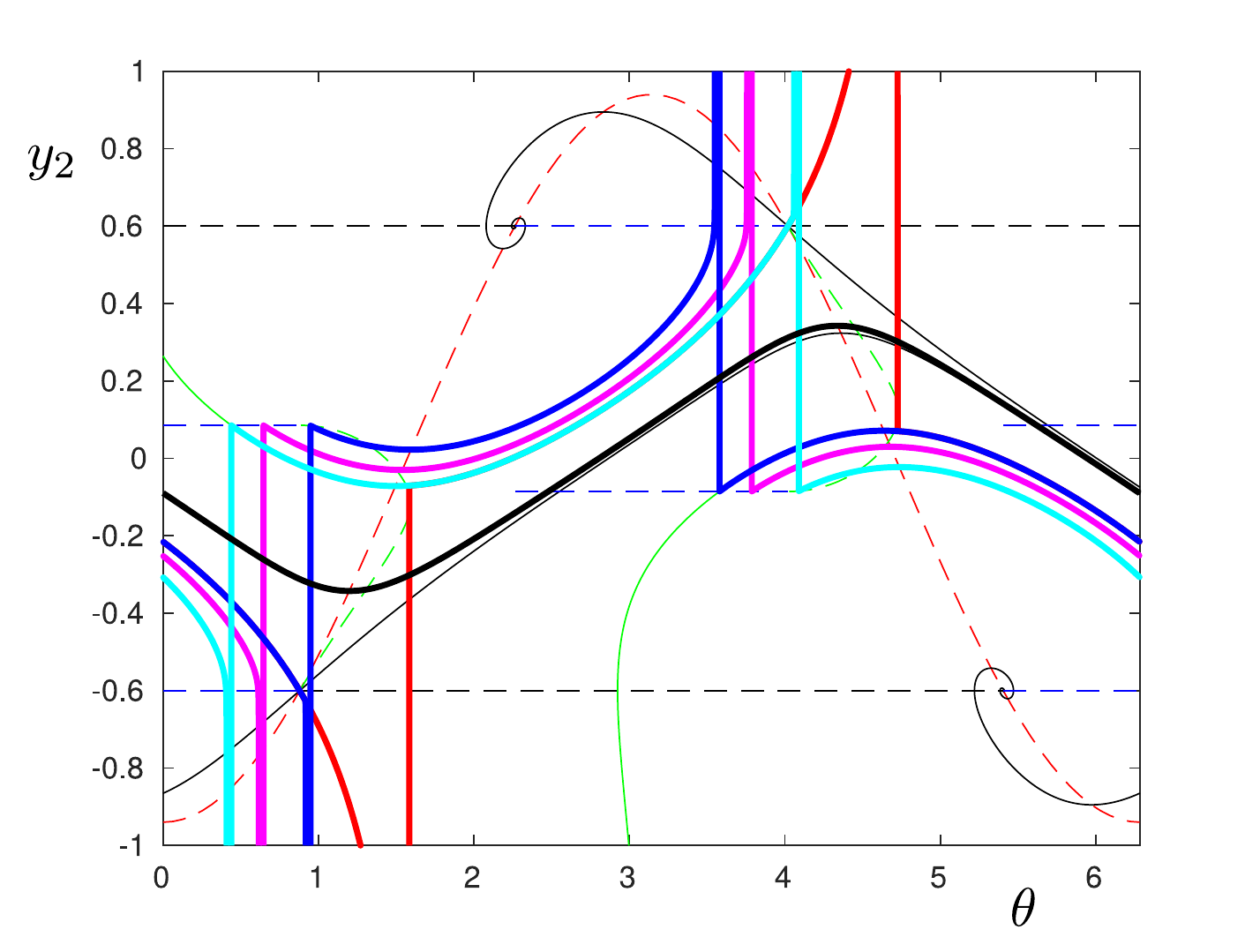}\end{subfigure}
 \begin{subfigure}{.45\textwidth}\caption{ }\label{fig:coexistence_bif}
  \centering
 \includegraphics[scale=0.415]{./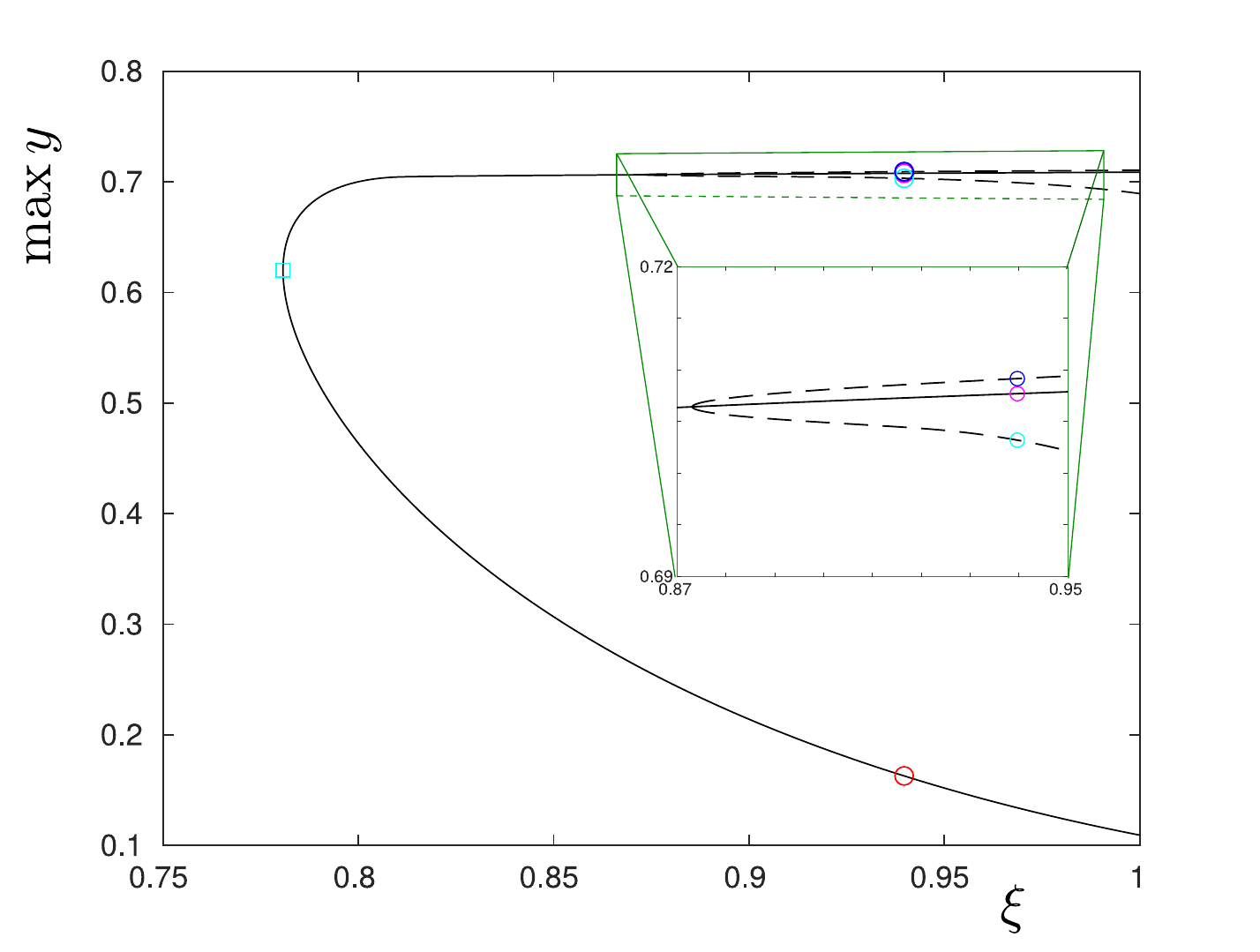}\end{subfigure}
  \begin{subfigure}{.45\textwidth}\caption{ }\label{fig:coexistence_csn_ytheta}
  \centering
 \includegraphics[scale=0.415]{./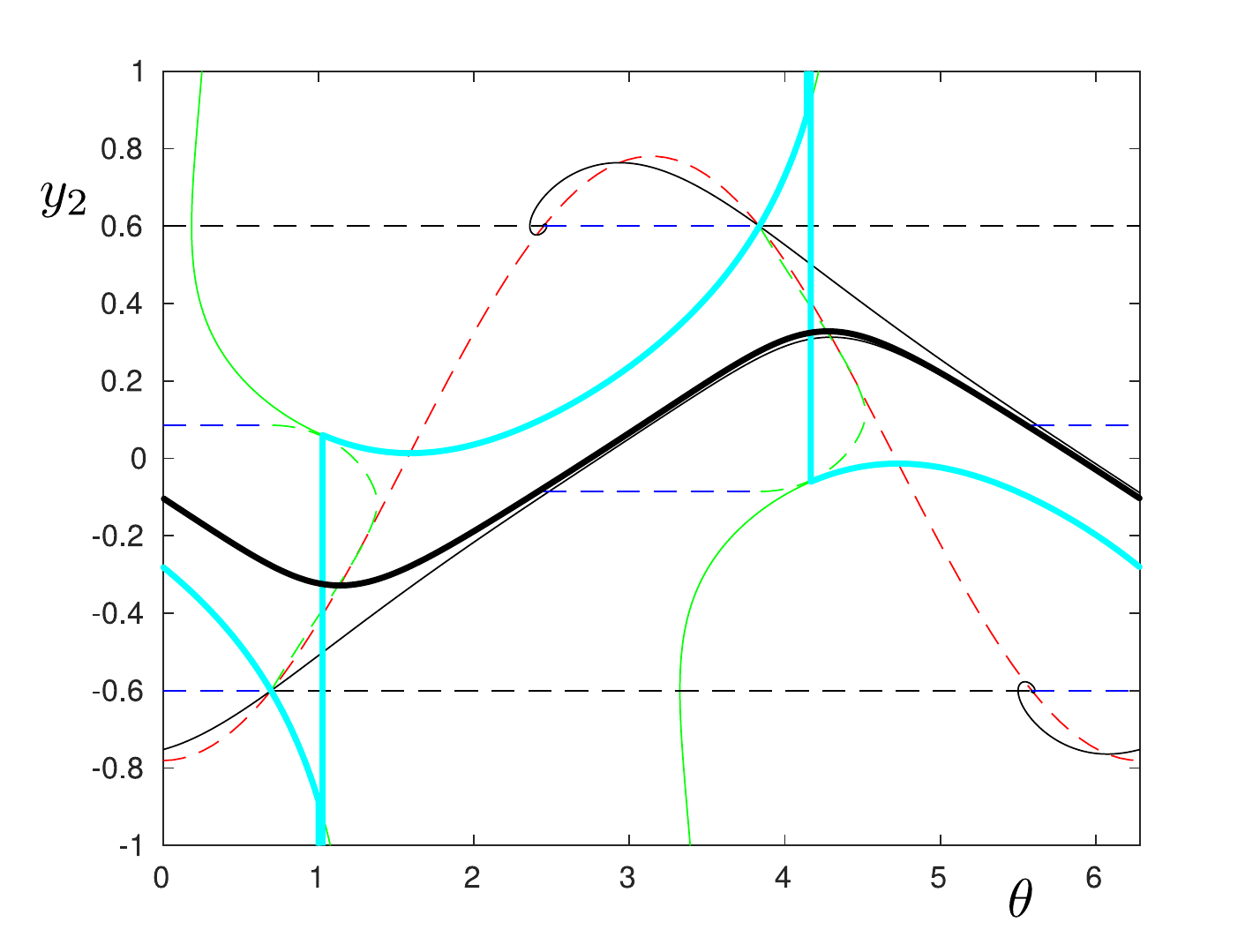}\end{subfigure}
  \begin{subfigure}{.45\textwidth}\caption{ }\label{fig:coexistencexvst}
  \centering
 \includegraphics[scale=0.415]{./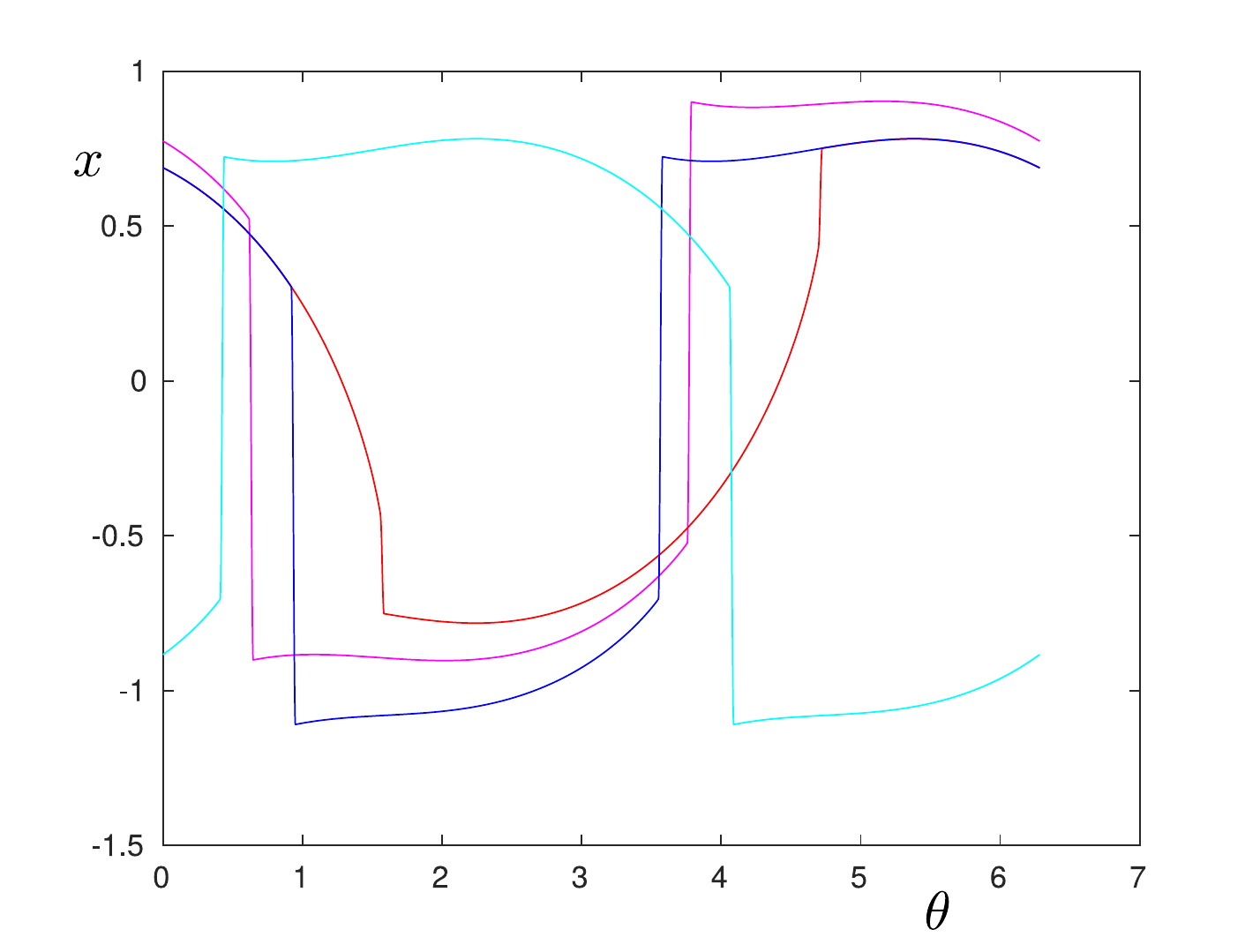}\end{subfigure}
  \caption{ In (a) we use a projection onto $(\theta,y_2)$ to illustrate five coexisting periodic orbits for $\xi=0.9397$. In magenta and black are shown stick-slip and pure-stick periodic orbits, respectively, the existence of which can be predicted by \Cref{thm:main1new}. The red orbit is a symmetric periodic orbit having a (long) canard segment, the existence of such orbits were predicted in \cite[Proposition 6.6]{bossolini2020}. Finally, in blue and cyan we illustrate nonsymmetric periodic orbits that appear as a result of a period doubling bifurcation, shown in the bifurcation in (b), see zoom. This bifurcation diagram represents each stick-slip periodic orbit as a point using the same colour for the corresponding value of $\xi=0.9397$. It also shows the fold bifurcation (cyan square) at $\xi=0.7805$ (which is close to the value of $\xi_t\approx 0.7835$, recall \Cref{fig:Ups}); (c) shows the corresponding bifurcating periodic orbit using the same colour. We see that the bifurcation occurs to good accuracy when the canard  (in green) is tangent to the set $G_-(\tilde \gamma_-)$ (dashed green line). Finally, (d) shows $x(\theta)$ for the different periodic orbits in (a). The remaining parameters are defined by \cref{eq:parahere} and $\varepsilon = 0.01$.  }\label{fig:coexistence}
\end{figure}

 \begin{figure}[t!]
        \centering
        \begin{subfigure}{.45\textwidth}\caption{ }\label{fig:coexistence_cAndss_ythetanew}
  \centering
\includegraphics[scale=0.415]{./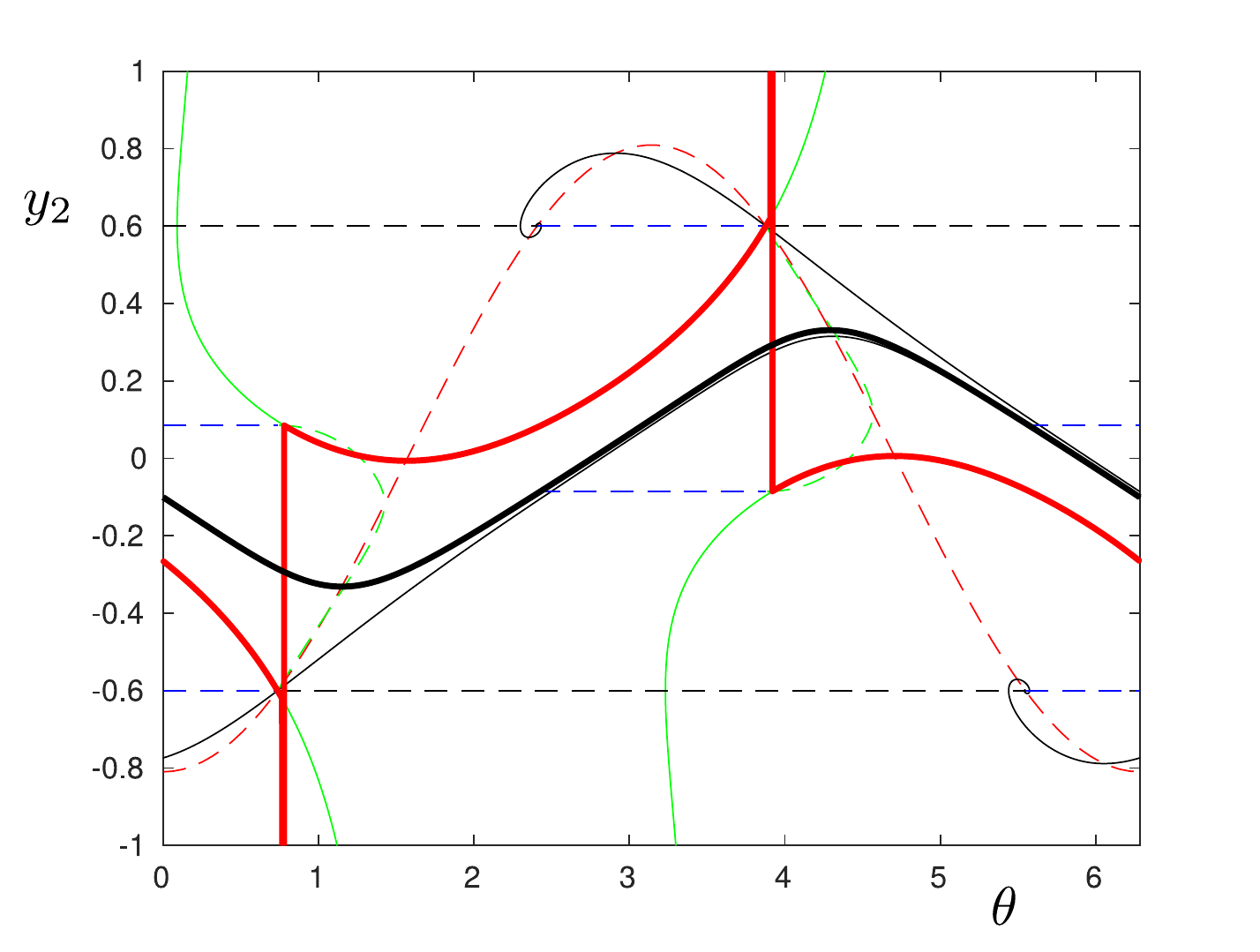}\end{subfigure}
 \begin{subfigure}{.45\textwidth}\caption{ }\label{fig:coexistence_bifnew}
  \centering
 \includegraphics[scale=0.415]{./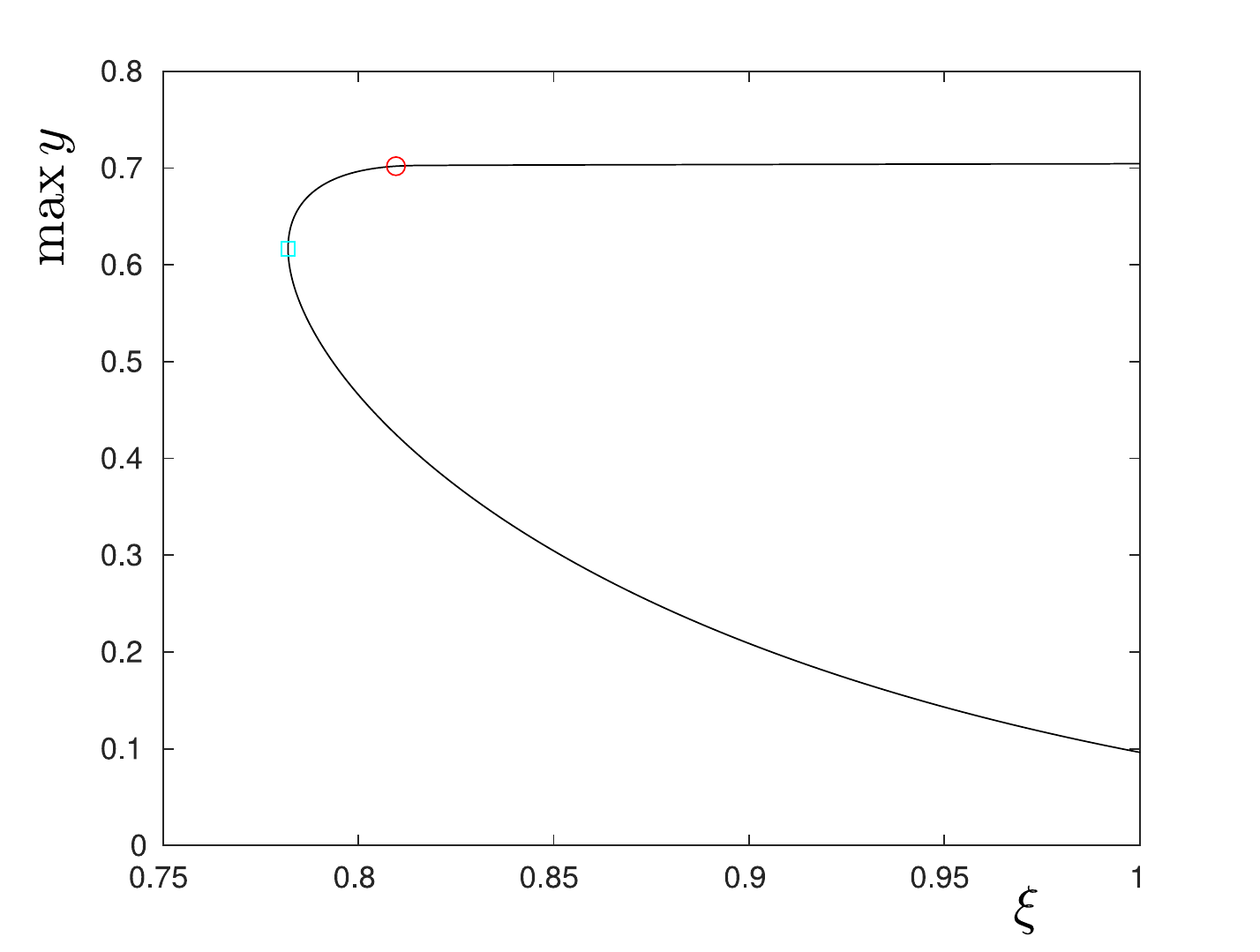}\end{subfigure}
   \caption{Same in \Cref{fig:coexistence} but for $\varepsilon=0.005$. Specifically, (b) shows the bifurcation diagram, the red point corresponding to a period doubling bifurcation of $R_\vare$ at $\xi=0.8096$. The associated degenerate periodic orbit is shown in (a) also in red along with the black stick orbit. As indicated the period doubling bifurcation occurs near $\xi=\xi_{pd}\approx 0.8179$ which is the value of $\xi$ for when the $G_-$-image of $z_-$ intersects $\gamma_+$ (where the green and blue dashed lines meet). The fold bifurcation indicated by the cyan square occurs at $\xi=0.7820$, which is also closer to the expected value to $\xi_{t}= 0.7835$.}
  \label{fig:coexistencenew}
\end{figure}

 
 \begin{remark}\label{rem:complicated}
When \ref{it:A5} does not hold, the dynamics can be different. \Cref{fig:complicatedytheta} shows an example for $\mu_s<1$, see the figure caption for details, in which case there is a single stick-slip periodic orbit visiting $y<-c$ and $y>c$ for $c>0$ small enough more than once within each period. In friction terms, the mass slips six times within each period, three times to the right and three times to the left. In fact, in this particular case $\xi=0.9$ it enters both regions three times each period and since $\gamma_+$ leaves $C_a$ at $F_-$ without intersecting $G_-(\tilde \gamma_-)\cup L_-(\tilde \gamma_-)$ there cannot be any canard-type limit cycles of the form described in \Cref{thm:main1new} and no pure-stick orbits either. \Cref{fig:complicatedxvst} shows the corresponding time history $x(\theta)$. However, what we see as we decrease $\xi$ slightly (not shown) from the value of $\xi$ in \Cref{fig:complicatedytheta} is that first one of each of the transitions into $y<-c$ and $y>c$ approaches the two canards $\gamma_\pm$ (in green in \Cref{fig:complicatedytheta}), respectively,  and subsequently, through the canard and the return mappings $G_\pm$ and $L_\pm$, there is an apparent smooth connection to stick-slip limit cycles with just two excursions into $y<-c$ and $y>c$ during each period. This occurs around $\xi\approx 0.83$ but we have not managed to obtain satisfactory bifurcation diagrams in AUTO so we do not show further diagrams here; we believe that the situation can be understood from the case illustrated in \Cref{fig:complicatedytheta} where the last regular jumps on $J_\pm$ land quite close to $\gamma_\pm$, respectively. The bifurcation we describe in words is reminiscent of spike-adding bifurcations in bursting models of neuroscience, see e.g. \cite{carter2020a} which has a rigorous GSPT-based treatment of this situation. In our friction setting a ``spike'' corresponds to a slip phase. As for \Cref{fig:coexistence} this canard-phenomena is accompanied by a period-doubling bifurcation. We have not pursued any of this rigorously in our setting. Finally, we note that as we decrease $\xi$ further then there is one more transition from two excursions into each region $y<-c$ and $y>c$ during each period to just one single excursion. This occurs around $\xi\approx 0.636$. \Cref{fig:complicatedytheta_xi0_61} shows one example of such a limit cycle with just two slip phases for $\xi=0.61$ slightly below this value and close to the critical $\delta=0.6$, where the folded singularities disappear and beyond which only pure-stick orbits persist. Another interesting aspect of  \Cref{fig:complicatedytheta_xi0_61} is that two folded singularities are now folded nodes and we see in \Cref{fig:complicatedytheta_xi0_61} the classical small oscillations that occur here \cite{wechselberger2005a,kristiansenthis}. 
\end{remark}

\begin{figure}[t!]
        \centering
        \begin{subfigure}{.45\textwidth}\caption{ }\label{fig:complicatedytheta}
  \centering
\includegraphics[scale=0.415]{./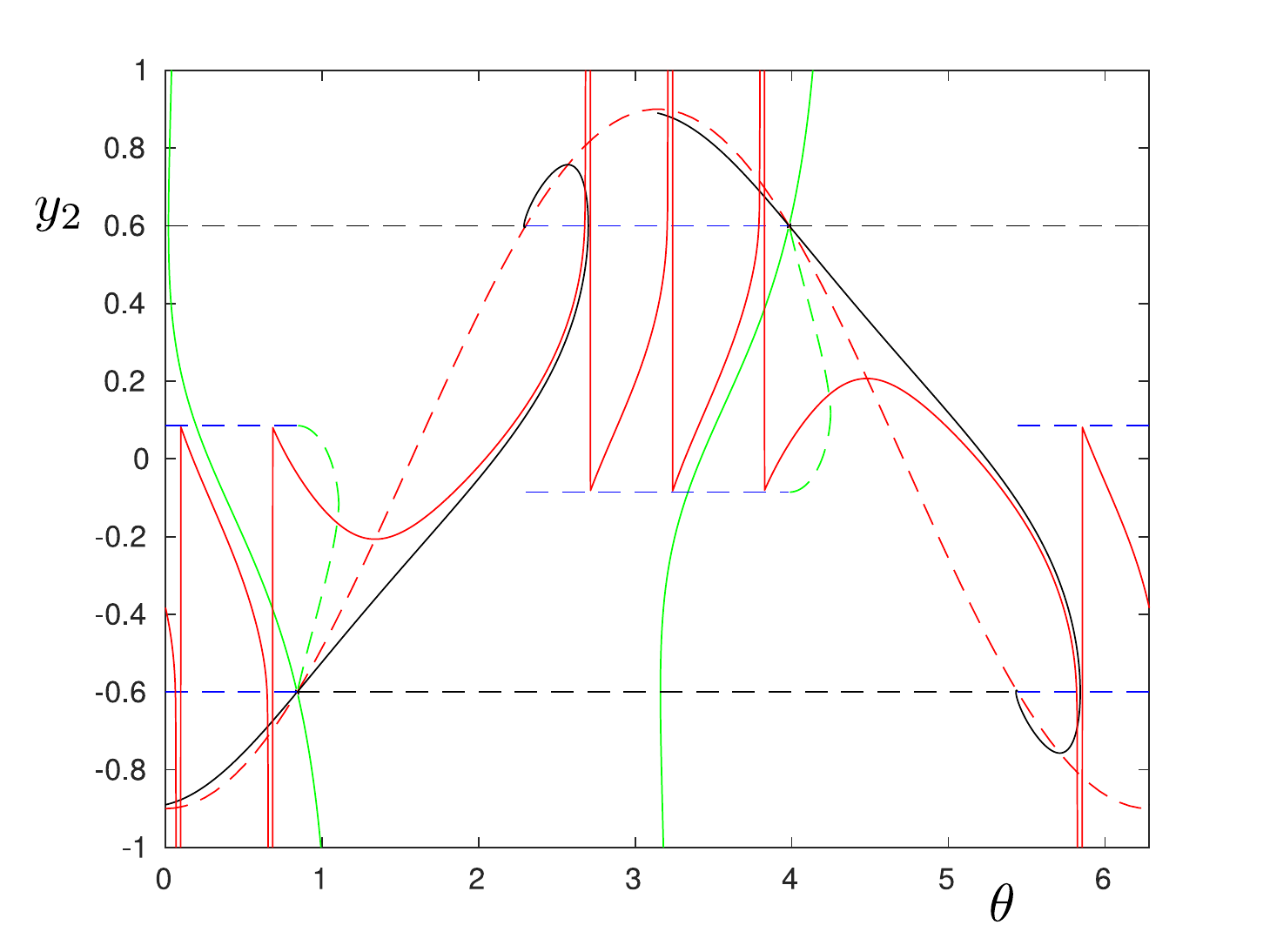}\end{subfigure}
 \begin{subfigure}{.45\textwidth}\caption{ }\label{fig:complicatedxvst}
  \centering
 \includegraphics[scale=0.415]{./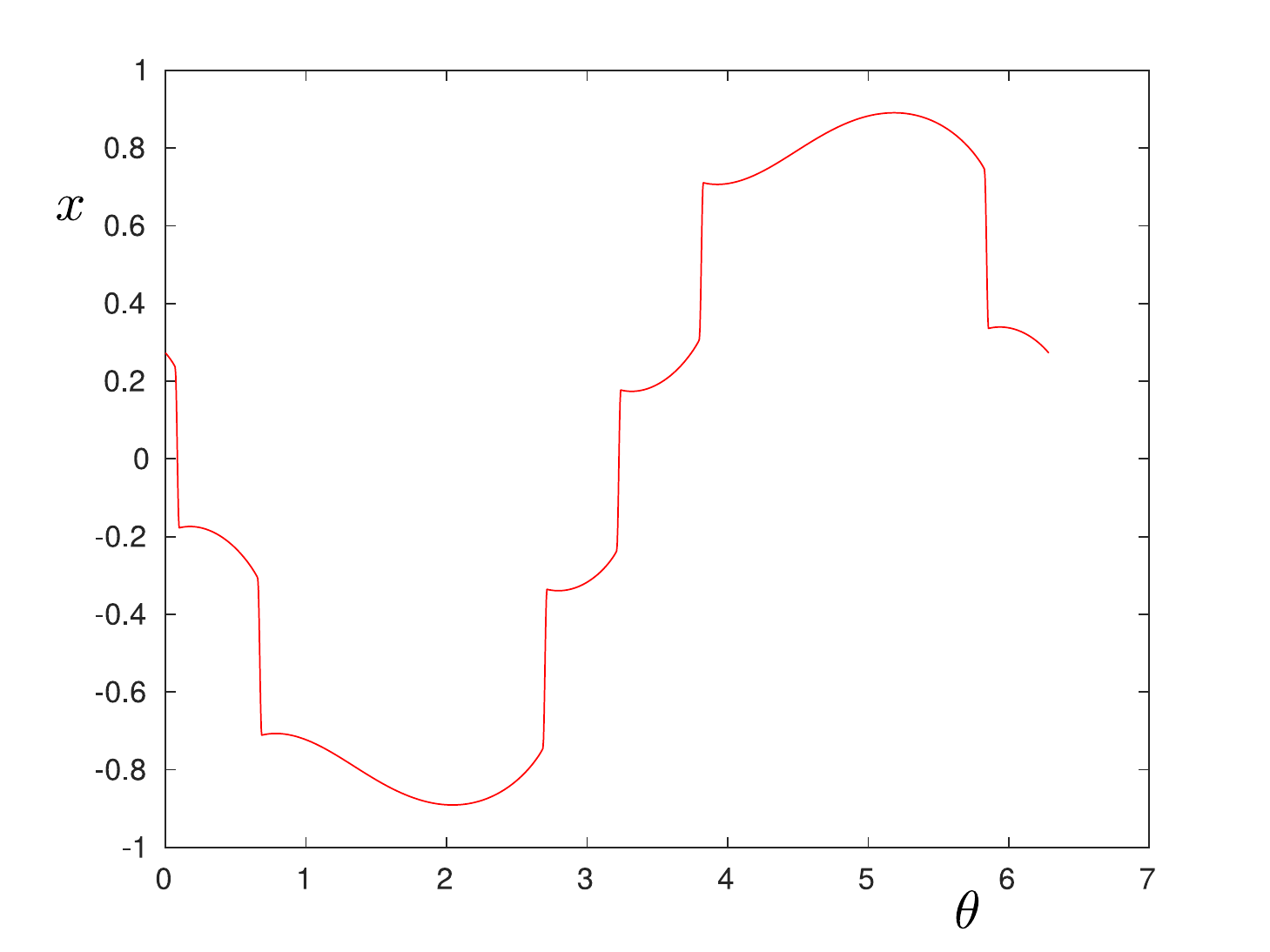}\end{subfigure}
  \begin{subfigure}{.45\textwidth}\caption{ }\label{fig:complicatedytheta_xi0_61}
  \centering
 \includegraphics[scale=0.415]{./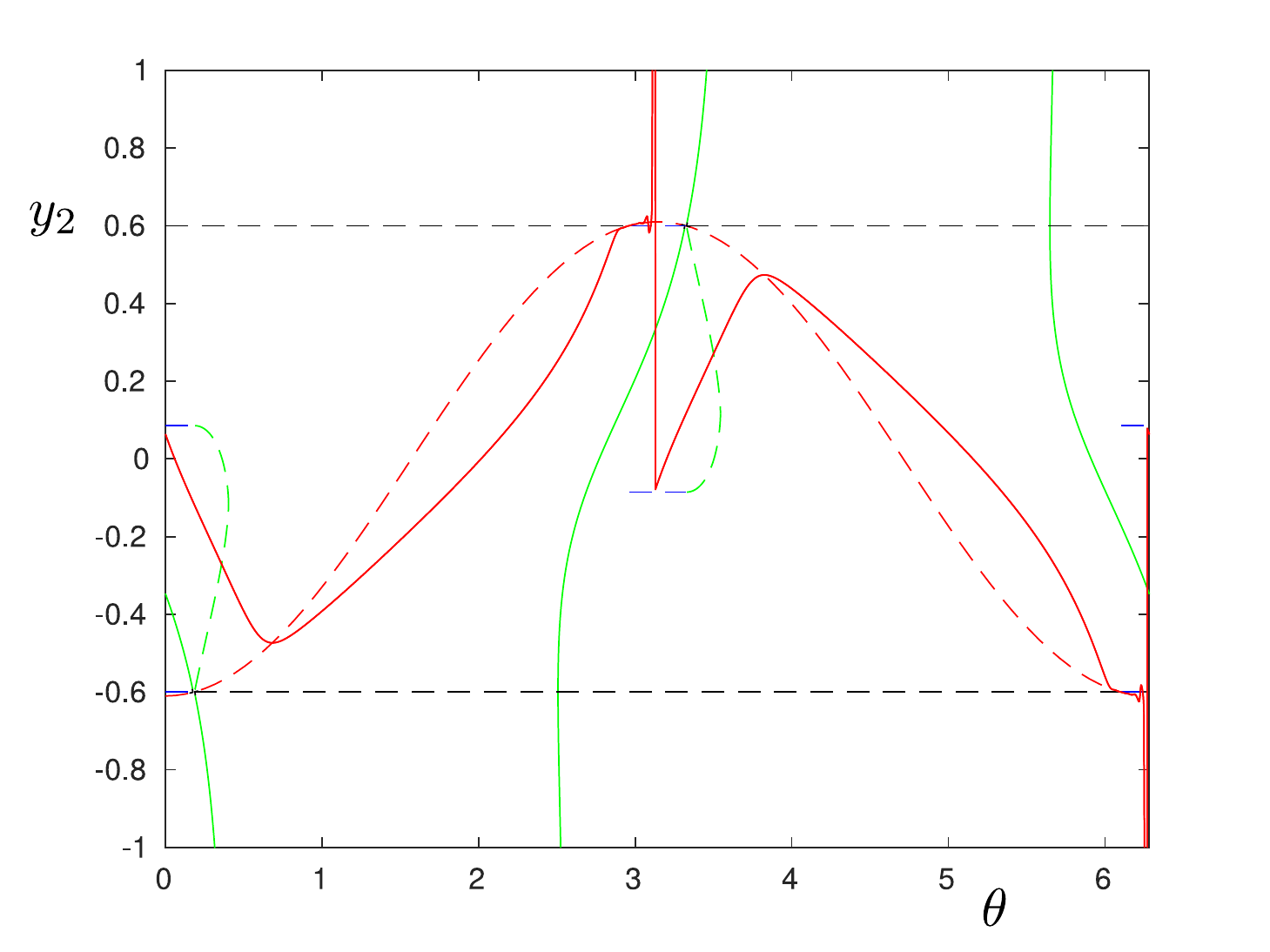}\end{subfigure}
   \caption{Same as in \Cref{fig:coexistence} but for $\mu_d=0.11$, $\mu_s=0.3$. In (a) and (b) we have $\xi=0.9$ whereas $\xi=0.61$ (just slightly above the critical value $\delta=0.6$) in (c). In (a), we see that $\gamma_+$ intersects $F_-$ and not $G_-(J_-)$ and as a result no pure-stick orbits exists. We see that the stick-slip periodic orbits are more complicated with several excursions into $y\gtrless \pm -c$ for $c>0$ sufficiently small for all $0<\vare\ll 1$. Specifically, in (a) the red stick-slip orbit has four such fast excursions. Figure (b) shows the corresponding $x(\theta)$. Finally, in (c) we see that the limit cycle in red passes through the folded nodes and consequently it is of mixed-mode type with small oscillations in this region. }\label{fig:complicated}
\end{figure}

\section{Canard-induced chaos}\label{sec:chaos}

In this final section, we prove existence of chaos through a simple geometric mechanism based upon the folded saddle that produces a horseshoe. The basic geometry is shown in \Cref{fig:reducedchaos}, which under assumption \ref{it:A5} occurs for $\xi\in (\xi_t,\xi_{pd})$, see also \Cref{fig:xitpdph} for the regularization function \cref{eq:phie} and parameter values \cref{eq:parahere}. 
In the case illustrated in \Cref{fig:reducedchaos}, $\gamma_+$ transversally intersects $G_-(\tilde \gamma_-)$ twice and by \Cref{thm:main1new} there are two limit cycles $\Gamma_{ssc}(\xi,\vare)$ and $\Gamma_{c}(\xi,\vare)$ with canard segments. (These coexists with the pure-stick orbit $\Gamma_{ps}(\xi,\vare)$ but this will play little role in the following. ) In principle, one intersection could be due to $L_-(\tilde \gamma_-)$, without essential changes to our approach, but for simplicity we will focus on $G_-(\tilde \gamma_-)$ here (also to avoid the generic case in-betwen $L_-(\tilde \gamma_-)$ and $G_-(\tilde \gamma_-)$.
In particular, there are two singular periodic orbits $\gamma_{1}=\lim_{\vare\rightarrow 0} \Gamma_{ssc}(\xi,\vare)$ and $\gamma_2=\lim_{\vare\rightarrow 0} \Gamma_{c}(\xi,\vare)$, one for each transverse intersection, for $\vare=0$. Notice that each $\gamma_i$ on $C$ is given by two symmetric copies, one of which we parameterize by $\theta$ as follows
\begin{align*}
 \gamma_i:\,y_2=m(\theta),\quad \theta\in (\theta_i-\pi,\theta_{i}),
\end{align*}
where $\theta_i$ is the value of $\theta$ at the ``jump point'' on $C_r$. The function $m$ is smooth if $\gamma_i$ does not reach the the fold on the interval $\theta\in (\theta_i-\pi,\theta_i)$. Otherwise, by the mapping $G_+$, it becomes piecewise smooth. Notice that $\theta_2>\theta_1$ consisting with the definition of $\Gamma_{c}$ and $\Gamma_{ssc}$, see also \Cref{fig:lcs}.

Consider a canard segment $\gamma_\vare:=\{(x(\tau),y_2(\tau),\theta(\tau)):\,\tau\in [0,T]\}$ of \cref{eq:scaling2}, connecting $(x(0),y_2(0),\theta(0))\in S_{a,\vare}$ with 
$(x(T),y_2(T),\theta(T))\in S_{r,\vare}$. Then the variational equations along $\gamma_\vare$ have a solution $(x',y_2',\theta')$ with 
\begin{align}
  y_2'\sim \exp\left(\int_{0}^T \vare^{-2} \lambda(y_2(\tau)) d\tau\right)\label{eq:y2p}
\end{align}
where $\lambda$ is defined in \cref{eq:lambda}. 
In particular, if $\int_{0}^T \lambda(y_2) d\tau = \xi^{-1} \int_{\theta_0}^{\theta_T} \lambda (y_2) d\theta<0$ with $\theta_0=\theta(0),\theta_T=\theta(T)$, then the contraction gained along $S_{a,\vare}$ dominates the the expansion along $S_{r,\vare}$ and the expression \cref{eq:y2p} is exponentially small. 
Following on from this we then ...
\begin{enumerate}[resume*]
\item \label{it:A6} suppose that 
 \begin{align}\label{eq:lambdacond}
  \int_{\theta_1-\pi}^{\theta_2} \lambda(m(s)) ds <0.
 \end{align}
\end{enumerate}
 The interpretation of this condition is then that the contraction gained along the fast fibers of $\gamma_2$ from $\theta=\theta_1-\pi$ on the attracting side dominates the expansion on the repelling side up until $\theta_2$. Seeing that $\theta_2>\theta_1$ this condition ensures that the same holds for $\gamma_1$ from $\theta_1-\pi$ to $\theta_1$. We could also replace $<$ by $>0$ in \cref{eq:lambdacond}. We would then just consider the backward flow. For simplicity we stick to $<0$. 

\begin{figure}[!t]
        \centering
        \includegraphics[width=0.4\textwidth]{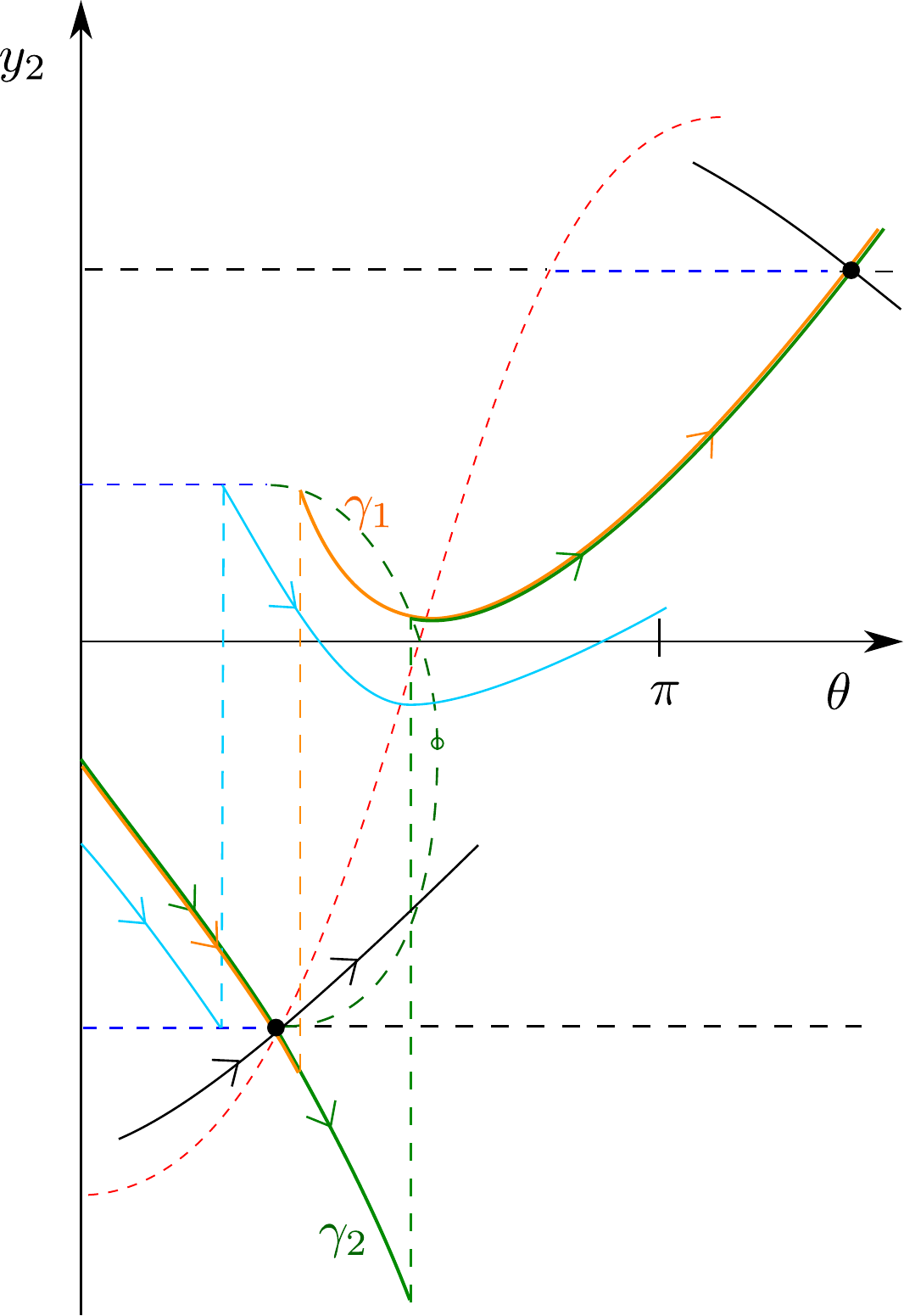}
        \caption{An illustration of a situation where the assumptions of \Cref{thm:main2} are satisfied.  }
        \label{fig:reducedchaos}
\end{figure}


\begin{theorem} \label{thm:main2}
Suppose \ref{it:A1}-\ref{it:A6} all hold and let $\xi\in (\xi_t,\xi_{pd})$ be so that $\gamma_+=\mathbb S\gamma_-$ transversally intersects $G_-(\tilde \gamma_-)$ twice as illustrated in \Cref{fig:reducedchaos}. Then there exists an $\vare_0>0$ small enough such that for all $\vare\in (0, \vare_0)$, $R_\vare$ has an invariant cantor-set $\Lambda_\vare$ where the restricted mapping  $R_\vare\vert_{\Lambda_\vare}$ is topologically conjugated to a shift of two symbols. 
\end{theorem}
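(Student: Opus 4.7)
The plan is to produce the horseshoe by constructing a topological rectangle $\mathcal{R}\subset \Pi$ such that $R_\vare(\mathcal{R})\cap \mathcal{R}$ consists of two disjoint ``full-height'' strips $\mathcal{V}_1,\mathcal{V}_2$, with uniform exponential expansion across $\mathcal{R}$ in one coordinate direction and uniform contraction in the transverse one. Together with cone conditions on $DR_\vare$, the Conley--Moser/Smale--Moser horseshoe theorem then yields an invariant Cantor set $\Lambda_\vare$ on which $R_\vare$ is topologically conjugate to the shift on two symbols.

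First I would place the section at $\theta_*=\theta_{\Upsilon}(\xi^{-1})$, as in the proof of \Cref{thm:main1new}, and choose $\mathcal{R}$ of size $\mathcal{O}(\vare^{k_0})$ along $S_{a,\vare}$ centred at the point $(y_{2,c},\theta_*)$ where the backward flow of $\gamma_-$ first hits $\Pi$; the transverse extent is taken of order $\mathcal{O}(\vare^{k_1})$ in the Fenichel fast direction. Points in $\mathcal{R}$ to one side of the singular stable manifold of $z_-$ undergo a regular jump at $J_-$ and return tamely to $S_{a,\vare}$ via $G_-$, whereas points on the other side follow the perturbed canard $\gamma_{-,\vare}$ along $S_{r,\vare}^-$ for a length that varies continuously with the initial condition. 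The key input is the variational estimate \cref{eq:y2p}: the derivative of the canard passage map in the direction transverse to $\gamma_{-,\vare}$ grows like $\exp(\vare^{-2}\int_0^T \lambda(y_2(\tau))d\tau)$, so sweeping the entry point by $\mathcal{O}(\vare^{k_0})$ along $S_{a,\vare}$ produces an image arc on $S_{r,\vare}^-$ whose length is of order one; applying $G_-$ transports this arc into a neighborhood of $\gamma_+$ on $C_a$, where transversality to $\gamma_+=\mathbb{S}\gamma_-$ is governed by the assumed two intersection points.

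The folding mechanism then arises as follows. Under $G_-$, the image of the canard-swept arc crosses $\gamma_+$ exactly twice by the assumption of two transverse intersections of $\gamma_+$ with $G_-(\tilde\gamma_-)$. After the symmetric half-period (and the symmetry $\mathbb{S}$), these two crossing points pull back to two disjoint sub-rectangles $\mathcal{H}_1,\mathcal{H}_2\subset \mathcal{R}$, and on each $\mathcal{H}_i$ the map $R_\vare$ stretches the canard-transverse direction across the full height of $\mathcal{R}$, producing $\mathcal{V}_i=R_\vare(\mathcal{H}_i)$. Assumption \ref{it:A6} guarantees that the net contraction accumulated along the attracting segment $\theta\in(\theta_1-\pi,\theta_1-\pi+\text{slow part})$ outweighs the expansion along the repelling segment up to $\theta_2$, so that $\mathcal{V}_i$ fits geometrically inside $\mathcal{R}$ rather than being catapulted away; at the same time the exponential separation produced along the canard dominates the contraction so the horizontal stretching factor diverges as $\vare \rightarrow 0$.

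To verify hyperbolicity, I would compute $DR_\vare$ by chain rule through the pieces: Fenichel contraction onto $S_{a,\vare}$ (exponentially strong in the fast direction), the $\mathcal{O}(1)$ reduced flow on $C_a$, the folded-saddle canard passage (using the blow-up based exchange-of-stability estimates of \cite{szmolyan2001a}, together with the fibration of $S_{a,\vare},S_{r,\vare}^-$), the diffeomorphism $G_-$, and a second Fenichel contraction after landing in a neighborhood of $\gamma_+$. This yields, on each $\mathcal{H}_i$, an invariant unstable cone field with expansion of order $\exp(c\vare^{-2})$ and a complementary stable cone field with contraction of order $\exp(-c'\vare^{-2})$; the cone conditions are inherited by images and preimages because the canard stretching direction is transverse to the Fenichel stable direction. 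Standard horseshoe arguments \cite{haiduc2009a} then conclude. The principal technical obstacle is making the canard passage derivative estimate uniform over initial conditions that are not infinitely close to $\gamma_{-,\vare}$ in the scaled sense: points in $\mathcal{H}_i$ differ from the canard by $\mathcal{O}(\vare^{k_0})$, so one must combine the way-in/way-out function of Benoit--Szmolyan--Wechselberger with the cylindrical blow-up used in \cref{sec:blowup} to control both the exponent and the smooth dependence on the entry coordinate; once this uniform canard derivative estimate is in hand, the transversality of $\gamma_+ \cap G_-(\tilde\gamma_-)$ and condition \ref{it:A6} together supply precisely the Smale--Moser hypotheses.
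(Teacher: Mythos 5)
Your proof proposal follows essentially the same route as the paper: both construct the horseshoe from the exponential stretching of the folded-saddle canard passage (via the Exchange Lemma / way-in--way-out function), use the two transverse intersections of $\gamma_+$ with $G_-(\tilde\gamma_-)$ to produce two returning strips, invoke assumption \ref{it:A6} to ensure the contraction on $S_{a,\vare}$ dominates the expansion on $S_{r,\vare}^-$, and close the argument with Conley--Moser cone conditions. Two implementation details differ and are worth noting. The paper explicitly \emph{redefines} the section to $\theta_*\gtrsim\theta_1-\pi$ (just before the canard segment of the singular cycle $\gamma_1$) rather than reusing $\theta_\Upsilon(\xi^{-1})$ from the proof of \Cref{thm:main1new}; this is what makes the Exchange Lemma apply cleanly by flowing a neighborhood of $\gamma_{-,\vare}\cap\{\theta=\theta_*\}$ on $S_{a,\vare}$ forward through the fold, and adopting the same choice would simplify your tracking. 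More substantively, since the canard-passage derivative in \cref{eq:y2p} scales like $e^{c/\vare^2}$, the substrips $\mathcal H_i$ that return under $R_\vare$ and their base intervals on $S_{a,\vare}$ must be \emph{exponentially} small and exponentially close to $\gamma_{-,\vare}$ --- this is how the paper formulates the preimages $K_\vare,M_\vare$ --- whereas your rectangle $\mathcal R$ is assigned polynomial scales $\mathcal{O}(\vare^{k_0})\times\mathcal{O}(\vare^{k_1})$. Polynomial scales are fine for the containing rectangle, but you should make the exponential smallness of $\mathcal H_i$ (and of the vertical strips $V_{i,\vare}$ in the transverse Fenichel direction) explicit, since it is precisely this feature that your uniform canard-derivative estimate needs to produce full-height crossing and the claimed $e^{c/\vare}$ cone-expansion constants.
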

\begin{proof}
 As before we fix $S_{a,\vare}$ and $S_{r,\vare}$ and let $\gamma_{-,\vare}$ be a perturbation of $\gamma_-$ connecting these manifolds \cite{szmolyan2001a}. We take $S_{a,\vare}$ to be symmetric \cite{haragus2011a} so that $\gamma_{+,\vare}=\mathbb S\gamma_{-,\vare}$ also belongs to $S_{a,\vare}$. We now redefine $\theta_*\gtrsim\theta_1-\pi$ in such way, that by the Exchange Lemma \cite{schecter2008a}, upon flowing a small neighborhood of $\gamma_{-,\vare}\cap \{\theta=\theta_*\}$ on $S_{a,\vare}$ forward, we obtain a(n) (specific) unstable manifold $W^u(\gamma_{-,\vare})$ of $\gamma_{-,\vare}$ on the repelling side. This set reaches, by following $G_-$, a vicinity of $\gamma_{+,\vare}$ on $S_{a,\vare}$ by assumption. By the stable foliation of fibers of $S_{a,\vare}$ we can therefore extend $W^u(\gamma_{-,\vare})$ in this way so that it is now foliated by stable fibers with base points on $S_{a,\vare}$. By transversality of $G_-(\tilde \gamma_{-})$ with $\gamma_{+}$ on $S_a$ in two points, as assumed, it follows that the foliation of $W^u(\gamma_{-,\vare})$ intersects $\gamma_{+,\vare}$ transversally in two points $q_\vare$ and $p_\vare$ on $S_{a,\vare}$. We consider two sufficiently small ``intervals'' $I_\vare$ and $J_\vare$ of base points of $W^u(\gamma_{-,\vare})$ that act as disjoint neighborhoods of $q_{\vare}$ and $p_{\vare}$ on $S_{a,\vare}$. Upon restricting $I_\vare$ and $J_\vare$ further if necessary the forward flow of these points on $S_{a,\vare}$ coincide as $S_{a,\vare}\cap \{\theta=\pi+\theta_*\}$ on a sufficiently small neighborhood of $\gamma_{+,\vare}\cap \{\theta=\pi+\theta_*\}$. Therefore, on this neighborhood $W^u(\gamma_{-,\vare})$ becomes two one-dimensional disjoint ``strips'' $H_{1,\vare}$ and $H_{2,\vare}$ that are exponentially close to $S_{a,\vare}\cap \{\theta=\pi+\theta_*\}$. By construction, the ``preimages'' on $\theta=\theta_*$ of these strips are exponentially small intervals $K_\vare$ and $M_\vare$ on $S_{a,\vare}\cap \{\theta=\theta_*\}$, both exponentially close to $\gamma_{a,\vare}$. 
 
 
 Now, consider the stable manifolds $V_{1,\vare}:=W^s(K_\vare)$ and $V_{2,\vare}:= W^s(M_\vare)$ of $K_\vare$ and $M_\vare$, respectively, within $\{\theta=\theta_*\}$. 
 These sets are then each $\mathcal O(1)$ in the ``verticle'' direction, transverse to $S_{a}$, and exponentially thin in the ``horizontal'' direction tangent to $S_a$. By \cref{eq:lambdacond}, the forward flow of these sets produce exponential thickenings of the horizontal  strips $H_{1,\vare}$ and $H_{2,\vare}$, being the images of $K_\vare$ and $M_\vare$, respectively. We continue to denote these ``thickened'' objects by the same symbols $H_{1,\vare}$ and $H_{2,\vare}$. It then follows that $\mathbb S H_{1,\vare}$ and $ \mathbb S H_{2,\vare}$ are two horizontal strips at $\theta=\theta_*$ that intersect $V_{1,\vare}$ and $V_{2,\vare}$ in four exponentially small rectangles. This gives the desired horseshoe mechanism. We then proceed to verify the cone properties of the Conley-Moser conditions using \cite[Theorem 25.2.1]{wig}.  For this, we again use the slow-fast structure and define an ``expanding cone'' $K_u$ of opening angle $d_u>0$ (small enough) centered at tangent vectors $(x', y_2')$ of $C_a$ at $\gamma_- \cap \{\theta=\theta_*\}$. Indeed, consider then a point $q\in V_{i,\vare}$ and let $p=R_{\vare}(q)\in H_{i,\vare}$. Then, since the intersection of $S_{a,\vare}$ and $S_{r,\vare}$ is transverse, and since $G_-(\tilde \gamma_-)$ is transverse to the slow flow on $C_a$, it follows that $K_u$ is invariant for $DR_{\vare}$ and vectors is expanding exponentially due the motion near $C_r$. In summary, $DR_{\vare}(q)(K_u)\subset K_u$ and 
 \begin{align*}
  \vert DR_{\vare}(q)(v) \vert \ge e^{c_u/\vare} \vert v\vert,\quad v\in K_u, 
 \end{align*}
for some $c_u>0$ and all $0<\vare\ll 1$. Similarly, we define a ``contracting cone'' $K_s$ of opening angle $d_s>0$ (small enough) centered at tangent vectors $(0,y_2')$ of the leaves of the stable foliation of $C_a\cap \{\theta=\theta_*\}$. Then by the Exchange Lemma and the transverse intersection of $S_{a,\vare}$ and $S_{r,\vare}$, we have that $DR_{\vare}^{-1}(p)(K_s)\subset K_s$ and 
 \begin{align*}
  \vert DR_{\vare}^{-1}(p)(v) \vert \ge e^{c_s/\vare} \vert v\vert,\quad v\in K_s, 
 \end{align*}
 for some $c_s>0$ and all $0<\vare \ll 1$.
%
Having established the cone properties, the result therefore follows from classical results on Smale's horseshoe, see e.g. \cite[Theorem 25.1.5]{wig}.  
\end{proof}

%
%
%
%


 \section{Discussion}\label{sec:disc}
In this paper, we have solved some open problems on the bifurcation of stick-slip limit cycles for a regularized model \cref{eq:model0} of a spring-mass-friction oscillator in the limit where the ratio $\omega$ of the forcing frequency and the natural spring frequency is comparable with the scale associated with the friction. In particular, using GSPT and blowup we showed that the existence (and nonexistence) of limit cycles is directly related to a folded saddle and the location of the canard relative to the image of the fold lines under a return mechanism to the attracting critical manifold, see also \Cref{thm:main1new}. 

There are parameter regimes with $\mu_s<1$ that we did not pursue rigorously. However, in \Cref{rem:complicated} we pointed out that numerical computations in this regime also reveal interesting dynamics. In particular, it appears that there through the canards are spike-adding bifurcations \cite{carter2020a} in this case where a ``spike'' in our friction context is a slip phase. 

Finally, in \Cref{sec:chaos} we showed that the fold bifurcation of canard-type limit cycles is associated with the existence of a horseshoe, see \Cref{thm:main2}, and chaotic dynamics. The horseshoe mechanism is comparable to the mechanism for chaos in the forced van der Pol \cite{haiduc2009a}, involving a simple interplay between the folded saddle singularity and the return mechanism. In any case, it provides a simple way to prove existence of chaos in systems of the form \cref{eq:nonsm}. 

In terms of the friction modelling, it is interesting to note that the dynamics in the limit $\omega\rightarrow 0$ depends upon the details (i.e. the function $\phi$) of the friction force. For $\omega = \mathcal O(1)$, we know from \cite{bossolini2017b}, and also from more abstract results on the connection between Filippov systems \cite{filippov1988a} and the regularization of piecewise smooth systems \cite{kristiansen2014a,llibre2008a}, that the dynamics is qualitatively independent of $\phi$. Based upon this observation, we are led to the conclusion that the details of friction is to be determined on ``diverging time scales''. Although it is well established that our friction model (depending solely on the relative velocity) is over simplified \cite{woodhouse2015a}, this conclusion is consistent with experimental results \cite{heslot} as well as with friction modelling in regimes of low and high relative velocity, see e.g. \cite{putelat2010a,woodhouse2015a}. 



\begin{thebibliography}{99}






 
%

%

\bibitem{barrett1990a}
 {\sc R.~T.~Barrett}, {\em Fastener Design Manual}, Nasa Reference Publication 1228, 1990.


\bibitem{bossolini2017a}
{\sc E.~ Bossolini, E., M.~Br{\o}ns, and K.~U.~Kristiansen}, 
   {\em Singular limit analysis of a model for earthquake faulting},
  Nonlinearity, 30, (2017)
    pp.~2805--2834,
   \url{https://doi.org/10.1088/1361-6544/aa712e}.
   
   \bibitem{bossolini2017b}
{\sc E.~ Bossolini, E., M.~Br{\o}ns, and K.~U.~Kristiansen}, 
   {\em Canards in stiction: on solutions of a friction oscillator by regularization},
  SIAM J. Appl. Dyn. Syst., 16(4), (2017), 
   \url{https://doi.org/10.1137/17M1120774}.
   
  \bibitem{bossolini2020}
{\sc E.~ Bossolini, E., M.~Br{\o}ns, and K.~U.~Kristiansen}, 
   {\em A Stiction oscillator with canards: On piecewise smooth nonuniqueness and its resolution by regularization using geometric singular perturbation theory},
 SIAM Review, 62(4), 2020, 
    pp.~869--897,
   \url{https://doi.org/10.1137/20M1348273}.




\bibitem{carter2017}
{\sc P.~Carter, E.~Knobloch, and M.~Wechselberger}, {\em Transonic canards and
  stellar wind}, Nonlinearity, 30 (2017), pp.~1006--1033, \url{https://doi.org/10.1088/1361-6544/aa5743}.
  
  \bibitem{carter2020a}
  {\sc P.~Carter}, {\em Spike-Adding Canard Explosion in a Class of Square-Wave Bursters}, Journal of Nonlinear Science,
 30(6), (2020), pp.~2613--2669, \url{https://10.1007/s00332-020-09631-y}




\bibitem{csernak2006a}
{\sc G.~Csern\'ak and G.~St\'ep\'an}, {\em On the periodic response of a
  harmonically excited dry friction oscillator}, J. Sound Vibration, 295
  (2006), pp.~649--658, \url{https://doi.org/10.1016/j.jsv.2006.01.030}.


\bibitem{Doedel2006}
{\sc E.~J. Deodel}, {\em Lecture notes on numerical analysis of nonlinear
  equations}, in Numerical Continuation Methods for Dynamical Systems,
  B.~Krauskopf, H.~M. Osinga, and J.~Gal\'an-Vioque, eds., Springer,
  Dordrecht, 2016, pp.~1--49, \url{https://doi.org/10.1007/978-1-4020-6356-5_1}.

\bibitem{dibernardo2008a}
{\sc M.~Di~Bernardo, C.~Budd, A.~R. Champneys, and P.~Kowalczyk}, {\em
  Piecewise-Smooth Dynamical Systems}, Springer-Verlag, London, 2008, \url{https://doi.org/10.1007/978-1-84628-708-4}.
  \bibitem{dibernardo}
{\sc M.~Di~Bernardo, P.~Kowalczyk, and A.~Nordmark}, {\em
  Sliding bifurcations: A novel mechanism for the sudden onset of chaos in dry friction oscillators}, Internat. J. Bifur. Chaos Appl. Sci. Engrg, 12 (2003), pp.~2935--2948. \url{https://doi.org/10.1142/S021812740300834X}.
 
  
%

\bibitem{dumortier1996a}
{\sc F.~Dumortier and R.~Roussarie}, {\em Canard cycles and center manifolds},
  Mem. Amer. Math. Soc., 121 (1996), 577, \url{https://doi.org/10.1090/memo/0577}.


\bibitem{fenichel1974a}
{\sc N.~Fenichel}, {\em Asymptotic stability with rate conditions for dynamical
  systems}, Bull. Amer. Math. Soc., 80 (1974), pp.~346--349.

\bibitem{fenichel1979a}
{\sc N.~Fenichel}, {\em Geometric singular perturbation theory for ordinary
  differential equations}, J. Differential Equations, 31 (1979), pp.~53--98,
  \url{https://doi.org/10.1016/0022-0396(79)90152-9}.

\bibitem{filippov1988a}
{\sc A.~F. Filippov}, {\em Differential Equations with Discontinuous Righthand
  Sides}, Springer, Dordrecht, 1988.

%
\bibitem{guardia2010a}
{\sc M.~Guardia, S.~J. Hogan, and T.~M. Seara}, {\em An analytical approach to
  codimension-$2$ sliding bifurcations in the dry-friction oscillator}, SIAM J.
  Appl. Dyn. Syst., 9 (2010), pp.~769--798,
  \url{https://doi.org/10.1137/090766826}.


  \bibitem{haiduc2009a}
  {\sc R.~Haiduc}, {\em Horseshoes in the forced van der Pol system},
  Nonlinearity,
 22, (2009), pp.~213--237, \url{https://doi.org/10.1088/0951-7715/22/1/011}.

 \bibitem{haragus2011a}
  {\sc M.~Haragus and G.~Iooss},
  {\em Local bifurcations, center manifolds, and normal forms in infinite dimensional dynamical systems},
  2011,
Springer, \url{https://doi.org/10.1007/978-0-85729-112-7}.

  
 \bibitem{heslot}
  {\sc F.~Heslot, T.~Baumberger, B.~Perrin, B.~Caroli and C.~CAROLI},
  {\em Creep, stick-slip, and dry-friction dynamics - experiments and a heurestic model},
 Physical Rev. E, 49(6), (1994), pp.~4973--4988, \url{https://doi.org/10.1103/PhysRevE.49.4973}.


\bibitem{hinrichs1998a}
{\sc N.~Hinrichs, M.~Oestreich, and K.~Popp}, {\em On the modelling of friction
  oscillators}, J. Sound Vibration, 216 (1998), pp.~435--459,
  \url{https://doi.org/10.1006/jsvi.1998.1736}.
%
 \bibitem{izh07} 
 {\sc E.~M.~Izhikevich}, {\em  Dynamical Systems in Neuroscience: The Geometry of Excitability and Bursting},  Cambridge, MA: MIT Press, 2007, \url{https://doi.org/10.7551/mitpress/2526.001.0001}.

  
  \bibitem{jef}
{\sc M.~R.~Jeffrey}, 
  {\em Hidden Dynamics: The mathematics of switches, decisions and other discontinuous behaviour},
  	Springer International Publishing, 2018, \url{https://doi.org/10.1007/978-3-030-02107-8}. 
  
    \bibitem{jel2020}
{\sc S.~Jelbart, K.~Uldall~Kristiansen, P.~Szmolyan, and M.~Wechselberger}, 
  {\em Singularly perturbed oscillators with exponential nonlinearities},
  arXiv preprint, 2020, \url{https://arxiv.org/abs/1912.11769}. 
  \bibitem{jel2020b}
  {\sc S.~Jelbart, K.~Uldall~Kristiansen, and M.~Wechselberger}, 
  {\em Singularly perturbed boundary-focus bifurcations},
  arXiv preprint, 2020, \url{https://arxiv.org/abs/2006.06087}.
  
\bibitem{jones1995a}
{\sc C.~K. R.~T. Jones}, {\em Geometric singular perturbation theory}, in
  Dynamical Systems, R.~Johnson, ed., Lecture Notes in Math. 1609,
  Springer, Berlin, Heidelberg, 1995, pp.~44--118, \url{https://doi.org/10.1007/BFb0095239}.

   \bibitem{kosiuk2009}
 {\sc I.~Kosiuk, and P.~Szmolyan},
  {\em Geometric singular perturbation analysis of an autocatalator model},
  Discrete Continuous Dyn. Syst. - Ser. S, 2 (2009), pp.~783–806
  \url{https://doi.org/10.1007/s00285-015-0905-0}
  
  
 \bibitem{kosiuk2015a}
 {\sc I.~Kosiuk, and P.~Szmolyan},
  {\em Geometric analysis of the Goldbeter minimal model for the embryonic cell cycle},
  J. Math. Biol., 72 (2016), pp.~1337-1368 
  \url{https://doi.org/10.1007/s00285-015-0905-0}
  
  
\bibitem{kowalczyk2008a}
{\sc P.~Kowalczyk and P.~T. Piiroinen}, {\em Two-parameter sliding bifurcations
  of periodic solutions in a dry-friction oscillator}, Phys. D, 237 (2008),
  pp.~1053--1073, \url{https://doi.org/10.1016/j.physd.2007.12.007}.

  
  

\bibitem{kristiansen2014a}
{\sc K.~U. Kristiansen and S.~J. Hogan}, {\em On the use of blowup to study
  regularizations of singularities of piecewise smooth dynamical systems in
  $\mathbb{R}^3$}, SIAM J. Appl. Dyn. Syst., 14 (2015), pp.~382--422, \url{https://doi.org/10.1137/140980995}.
  
 
  \bibitem{kristiansen2018a}
{\sc K.~U.~Kristiansen and S.~J. Hogan}, 
  {\em Resolution of the Piecewise Smooth Visible-Invisible Two-Fold Singularity in $\mathbb {R}^3$ Using Regularization and Blowup},
  J Nonlinear Sci, 29 (2018), pp.~723-787,    \url{https://doi.org/10.1007/s00332-018-9502-x}. 

   
      \bibitem{kristiansen2019b}
{\sc K.~Uldall~Kristiansen, P.~Szmolyan}, 
  {\em Relaxation oscillations in substrate-depletion oscillators close to the nonsmooth limit},
 Nonlinearity, to appear, 2021, \url{https://arxiv.org/abs/1909.11746v2}. 

\bibitem{Kristiansen_2020}
	{\sc K.~Uldall~Kristiansen},
	{\em A new type of relaxation oscillation in a model with rate-and-state friction},
	Nonlinearity, 33 (2020),	pp.~{2960--3037}, \url{https://doi.org.10.1088/1361-6544/ab73cf}.
  
  \bibitem{kristiansen2020b}
{\sc K.~Uldall~Kristiansen}, 
  {\em The regularized visible fold revisited},
 J Nonlinear Sci, to appear (2020), \url{https://doi.org/10.1007/s00332-020-09627-8}. 
  
  
  \bibitem{kristiansenthis}
{\sc K.~Uldall~Kristiansen}
  {\em On the Pitchfork Bifurcation of the Folded Node and Other Unbounded Time-Reversible Connection Problems in $\mathbb R^3$},
  SIAM J. Appl. Dyn. Syst., 19(3), (2020), pp. 2059--2102, \url{https://doi.org/10.1137/20M1326180}

  
  \bibitem{krupa2001a} 
  {\sc M.~Krupa and P.~Szmolyan}, {\em Extending geometric singular perturbation theory to nonhyperbolic
points-fold and canard points in two dimensions}, SIAM J. Math. Anal., 33, (2001), pp.~286--314, 
\url{https://doi.org/10.1137/S0036141099360919}.


\bibitem{krupa2001c}
{\sc M.~Krupa and P.~Szmolyan}, {\em Relaxation oscillation and canard
  explosion}, J. Differential Equations, 174 (2001), pp.~312--368,
  \url{https://doi.org/10.1006/jdeq.2000.3929}.


\bibitem{kuehn2015a}
{\sc C.~Kuehn}, {\em Multiple Time Scale Dynamics}, Springer, Cham, 2015, \url{https://doi.org/10.1007/978-3-319-12316-5}.

\bibitem{kuehn2015b}
{\sc C.~Kuehn and P.~Szmolyan}, {\em Multiscale Geometry of the Olsen Model and Non-classical Relaxation Oscillations}, J. Nonlinear Science, 25(3) (2015), \url{https://doi.org/10.1007/s00332-015-9235-z}.

\bibitem{kunze}
{\sc M.~Kunze}, {\em Non-smooth Dynamical Systems}, Lecture Notes in Math. 1744, Springer-Verlag, Berlin (2000). \url{https://doi.org/10.1007/978-3-319-12316-5}.



\bibitem{licsko2014a}
{\sc G.~Licsk\'o and G.~Csern\'ak}, {\em On the chaotic behaviour of a simple
  dry-friction oscillator}, Math. Comput. Simulation, 95 (2014), pp.~55--62,
  \url{https://doi.org/10.1016/j.matcom.2013.03.002}.

\bibitem{llibre2008a}
{\sc J.~Llibre, P.~R. da~Silva, and M.~A. Teixeira}, {\em Sliding vector fields
  via slow-fast systems}, Bull. Belg. Math. Soc. Simon Stevin, 15 (2008),
  pp.~851--869.



%

%
%
%
%
\bibitem{putelat2010a}
{\sc T.~Putelat, J.~H.~P. Dawes, and J.~R. Willis}, {\em Regimes of frictional
  sliding of a spring-block system}, J. Mech. Phys. Solids, 58 (2010),
  pp.~27--53, \url{https://doi.org/10.1016/j.jmps.2009.09.001}.
%

 
 

\bibitem{schecter2008a}
  {\sc S.~ Schecter},
  {\em Exchange lemmas 2: General Exchange Lemma},
  {J. Differential Equations}, 245(2), (2008), pp.~411--441, 
  \url{https://doi.org/10.1016/j.jde.2007.10.021}



\bibitem{szmolyan2001a}
{\sc P.~Szmolyan and M.~Wechselberger}, {\em Canards in $\mathbb{R}^3$}, J.
  Differential Equations, 177 (2001), pp.~419--453,
  \url{https://doi.org/10.1006/jdeq.2001.4001}.

\bibitem{szmolyan2004a}
{\sc P.~Szmolyan and M.~Wechselberger}, {\em Relaxation oscillations in
  $\mathbb{R}^3$}, J. Differential Equations, 200 (2004), pp.~69--104,
  \url{https://doi.org/10.1016/j.jde.2003.09.010}.

  
  

  
   \bibitem{Ter10}
   {\sc D.~H.~Terman and G.~B.~Ermentrout}, {\em Mathematical Foundations of Neuroscience}, Berlin:
Springer, 2010, \url{https://doi.org/10.1007/978-0-387-87708-2}.




\bibitem{wechselberger2005a}
  {\sc M.~Wechselberger}, {\em Existence and bifurcation of canards in $\mathbb R^3$ in the case of a folded node},  SIAM J. Appl. Dyn. Syst., 4(1), (2005), pp. 101--139, \url{https://10.1137/030601995}


\bibitem{wig}
  {\sc S.~Wiggins}, {\em Introduction to Applied Nonlinear Dynamical Systems and Chaos}, Vol. 2, 2003, Springer, \url{https://doi.org/10.1007/b97481}.
%



\bibitem{woodhouse2015a}
{\sc J.~Woodhouse, T.~Putelat, and A.~McKay}, {\em Are there reliable
  constitutive laws for dynamic friction?}, Philos. Trans. R. Soc. Lond. Ser. A Math. Phys. Eng. Sci., 373 (2015),
  20140401, \url{https://doi.org/10.1098/rsta.2014.0401}.

\end{thebibliography}
\end{document}